\colorlet{shadecolor}{gray!20}
\def\e{{\epsilon}}
\definecolor{darkmagenta}{rgb}{0.62, 0.0, 0.77}
\definecolor{darkgreen}{rgb}{0.13, 0.55, 0.13}
\definecolor{orange}{rgb}{0.98, 0.3, 0.0}
\definecolor{darkorange}{rgb}{1.0, 0.27, 0.0}
\definecolor{caribbeangreen}{rgb}{0.0, 0.8, 0.6}
\definecolor{brown}{rgb}{0.55, 0.27, 0.07}
\definecolor{green2}{rgb}{0.5, 0.7, 0.2}
\newcommand{\ts}[1]{\textcolor{darkmagenta}{ #1}}
\newcommand{\R}{\mathbb{R}}
\newcommand{\Rn}{\mathbb{R}^{n}}
\newcommand{\mR}{\mathcal{R}}
\def\im{\mathrm{i}}
\def\e{\mathrm{e}}
\def\Rbkk{{\mathbb{R}^{2k\times 2k}}}
\def\Rbnk{{\mathbb{R}^{2n\times 2k}}}
\def\Spkn{{\mathrm{Sp}(2k,2n)}}
\def\Spn{{\mathrm{Sp}(2n)}}
\newcommand{\skewset}{{\cal S}_{\mathrm{skew}}}
\newcommand{\symset}{{\cal S}_{\mathrm{sym}}}
\newcommand{\TU}{{\mathrm{T}_{X}}\Spkn}
\newcommand{\proj}{\mathcal{P}_X^{}}
\newcommand{\projn}{\mathcal{P}_X^\perp}
\newcommand{\proje}{\mathcal{P}_{X,e}^{}}
\newcommand{\projen}{\mathcal{P}_{X,e}^{\perp}}
\newcommand{\projc}{\mathcal{P}_{X,c}^{}}
\newcommand{\projcn}{\mathcal{P}_{X,c}^{\perp}}
\DeclareMathOperator*{\diag}{diag}
\DeclareMathOperator*{\qgeo}{qgeo}
\DeclareMathOperator*{\range}{im}
\DeclareMathOperator*{\skewsym}{skew}
\DeclareMathOperator*{\tr}{tr}
\DeclareMathOperator*{\grad}{grad}
\DeclareMathOperator*{\real}{Re}
\DeclareMathOperator*{\imag}{Im}
\newcommand{\abs}[1]{\left|#1\right|}
\newcommand{\dkh}[1]{\left(#1\right)}
\newcommand{\fkh}[1]{\left[#1\right]}
\newcommand{\rgrade}[1]{\mathrm{grad}_e f(#1)}
\newcommand{\rgradc}[1]{\mathrm{grad}_c f(#1)}					
\newtheorem{theorem}{Theorem}[section]
\newtheorem{proposition}[theorem]{Proposition}
\newdefinition{definition}[theorem]{Definition}
\newtheorem{lemma}[theorem]{Lemma}
\newdefinition{remark}[theorem]{Remark}
\begin{document}
\date{}
\begin{frontmatter}



\title{Optimization on the symplectic Stiefel manifold:\\
	SR decomposition-based retraction and applications}



\author[1]{Bin Gao}

\author[2]{Nguyen Thanh Son\corref{mycorrespondingauthor}}
\cortext[mycorrespondingauthor]{Corresponding author}
\ead{ntson@tnus.edu.vn}

\author[3]{Tatjana Stykel}

\address[1]{State Key Laboratory of Scientific and Engineering Computing, Academy of Mathematics and Systems Science, Chinese Academy of Sciences, 100190 Beijing, China}
\address[2]{Institut f\"ur Mathematik $\,\&$ Centre for Advanced Analytics and Predictive Sciences,  Universit\"at Augsburg, Universit\"{a}tsstra\ss e 12a, 86159 Augsburg, Germany \\
	and Thai Nguyen University of Sciences, 24118 Thai Nguyen, Vietnam }
\address[3]{Institut f\"ur Mathematik $\,\&$ Centre for Advanced Analytics and Predictive Sciences, Universit\"at Augsburg, Universit\"atsstra\ss e 12a, 86159 Augsburg, Germany} 


\begin{abstract}
Numerous problems in optics, quantum physics, stability analysis, and control of dynamical systems can be brought to an optimization problem with matrix variable subjected to the symplecticity constraint. As this constraint nicely forms a so-called symplectic Stiefel manifold, Riemannian optimization is preferred, because one can borrow ideas from unconstrained optimization methods after preparing necessary geometric tools. 
Retraction is arguably the most important one which decides the way iterates are updated given a~search direction. Two retractions have been constructed so far: one relies on the Cayley transform and the other is designed using quasi-geodesic curves.  In this paper, we propose a~new retraction which is based on an SR matrix  decomposition. We prove that its domain contains the open unit ball which is essential in proving the global convergence of the associated gradient-based optimization algorithm. Moreover, we consider three applications---symplectic target matrix problem, symplectic eigenvalue computation, and symplectic model reduction of Hamiltonian systems---with various examples. The extensive numerical comparisons reveal the strengths of the proposed optimization algorithm.
\end{abstract}

%

\begin{keyword}
Symplectic Stiefel manifold \sep 
Riemannian optimization\sep 
retraction\sep 
SR decomposition\sep 
symplectic target problem\sep 
 symplectic eigenvalue\sep 
 Hamiltonian systems\sep 
 symplectic model reduction


\MSC[2020] 15A23 \sep 
32C25 \sep 
65F15 \sep 
65F99 \sep 
65K05 \sep 
65P10 \sep 
90C30 
\end{keyword}

\end{frontmatter}



\section{Introduction}\label{Sec:Intro}
In scientific computing and physics, one often 
has to work with structured matrices. One type of 
them is the set of symplectic matrices defined~as
\begin{equation*}
\Spkn := \{ X \in \mathbb{R} ^{2n\times 2k}\quad : \quad X^TJ_{2n}X=J_{2k}\},
\end{equation*}
where $
J_{2n}=\left[\begin{smallmatrix}
0&I_n\\-I_n&0
\end{smallmatrix}\right]
$
is the $2n\times 2n$ skew-symmetric Poisson matrix and $I_n$ denotes the $n\times n$ identity matrix. This set has been indicated to be a closed, unbounded, embedded submanifold of $\Rbnk$, termed as the \emph{symplectic Stiefel manifold}~\cite{GSAS21}. 
For $k=n$, this manifold forms the symplectic Lie group denoted by $\Spn$. 
Another view on the symplectic Stiefel manifold is to consider it as a quotient manifold of the two Lie groups $\Spn/\mathrm{Sp}(2n-2k)$, see \cite{BendZ21} for details.
In this paper, we consider a~minimization problem with the symplecticity constraint given by
\begin{equation}
\min_{X\in \Spkn} f(X)
\label{eq:min}
\end{equation}
with a continuously differentiable cost function $f$.
In order to solve such an equality-constrained problem, one can apply, for example, a~standard penalty method or an~augmented Lagrangian approach \cite[Chapter~17]{NoceW06}. 
However, by exploiting the rich structure of $\Spkn$, Riemannian optimization becomes preferred as in this setting, 
various well-known unconstrained optimization methods in Euclidean spaces can be extended to the case of nonlinear manifold. 

To pursue this direction, a~Riemannian structure of the symplectic Stiefel manifold $\Spkn$ and necessary geometric tools are required. The tangent spaces of $\Spkn$ have been completely characterized in  \cite{GSAS21}. Moreover, a~canonical-like metric and an~Euclidean metric have been introduced in~\cite{GSAS21} and  \cite{GSAS21a}, respectively,  based on which the normal spaces,  the orthogonal projections onto the tangent and normal spaces and the Riemannian gradients have been investigated. In order to perform a search on $\Spkn$, we need a~mapping that retracts the Riemannian gradient from the tangent space at the current point to the manifold. To this end, two retractions have been proposed: the first one is based on the Cayley transform, henceforth referred to as the \emph{Cayley retraction}, and the other is constructed to mimic the shape of a~geodesic, called the \emph{quasi-geodesic retraction}. Based on these retractions, a~non-monotone line search algorithm has been developed in \cite{GSAS21} and various tests have been performed there to validate it. An~equivalent expression for the Cayley retraction on $\Spkn$ has also been derived  in \cite{BendZ21} starting from a~pseudo-Riemannian exponential.

In this paper, we propose a~new retraction which is based on 
an~SR~matrix decomposition \cite{DDora75,BunG86,Sala05}. 
Such a~retraction is therefore referred to as the \emph{SR~retraction}. We also establish in Proposition~\ref{prop:SGS} and Theorem~\ref{theo:SRretraction} that for rectangular $2n\times 2k$ matrices, the SR~decomposition almost always 
exists. More importantly, we show in Theorem~\ref{theo:SRretraction_existence} that the update can be computed if the length of the update with respect to the spectral matrix norm is smaller than one. Like the existing retractions, the new one can be combined with any metric which results in new Riemannian optimization schemes whose global convergence can be shown in a way similar to \cite[Theorem~5.7]{GSAS21}.

The minimization problem \eqref{eq:min} appears in various areas of physics and scientific computing. For example, the task of averaging optical transference matrices, investigating beam dynamics, and optimal control of quantum gate can be formulated as the minimization problem \eqref{eq:min} with $k=n$, see, e.g., \cite{Draftetal88,Harr04,WuCR08}. Further, stability analysis of  weakly damped gyroscopic systems~\cite{Lanc13} can be performed by solving \eqref{eq:min} with the trace cost function which enables determining so-called symplectic eigenvalues \cite{BhatJ15,SonAGS21,SonSt22}.
This fact also motivates us to perform an~extensive comparison of the gradient-based optimization schemes on a~wide variety of problems from different applications. These consist of the symplectic target problem which 
arises in optimal control of symplectic quantum gate, the computation of symplectic eigenvalues of a~symmetric positive-(semi)definite matrix which can be used for stability analysis of gyroscopic systems, and  model reduction of Hamiltonian systems. 

Especially, we would like to emphasize the application to structure-pre\-ser\-ving model reduction of Hamiltonian systems. Such systems have a specific structure and they possess some underlying physical properties such as conservation of energy, described by the Hamiltonian function, and conservation of mass which should be preserved during the model reduction process. This can be achieved by using the proper symplectic decomposition (PSD) model reduction approach developed in \cite{PengM16}. It consists in determining a~symplectic reduced basis matrix from a set of snapshots, i.e., solutions at different time instances,  which minimizes the symplectic projection error in a least squares sense. This leads to a~nonlinear optimization problem of the form~\eqref{eq:min}.
Unlike the orthogonal case, where a~singular value decomposition (SVD) of the data matrix can be used, an~explicit solution to the problem with the simplecticity constraint is still unknown. To simplify this problem, in~\cite{PengM16}, the optimal candidates are limited to the ones that additionally have orthonormal columns. The same condition is also used in a~greedy approach for parametric Hamiltonian systems in~\cite{AfkhH17}. Alternatively, the additional orthonormality condition  is eliminated  in~\cite{BuchBH19}, but the approach there is based on the \emph{SVD-like decomposition} and therefore, as those in~\cite{PengM16}, its solution is in general not guaranteed to be optimal  although it can have this property for a~special class of Hamiltonian systems in a~restrictive setting~\cite{BuchGH22}. 

In contrast to these approaches, in this paper, thanks to the progress on optimization techniques on the symplectic Stiefel manifold, we address the symplectic model reduction problem in an~optimal way. A~similar approach was recently considered in~\cite{BendZ22} in the framework of the symplectic Grassmann manifold~\cite{BendZ21} but no model reduction errors were reported there.
For nonlinear Hamiltonian systems,  the symplectic projection is combined with an~approximation of the nonlinear term computed by the discrete empirical interpolation decomposition method (DEIM) proposed in \cite{ChaS10} and its structure-preserving variant~\cite{ChatBG16}. In our experiments, we numerically evaluate the optimization-based model reduction methods and compare them with other structure-preserving reduction techniques deve\-loped for Hamiltonian systems.

After presenting the general notation, we organize the paper as follows. In 
Section~\ref{sec:Stiefel}, we briefly review basic geometric concepts and facts for the symplectic Riemannian manifold $\Spkn$. Notably, we present in detail the canonical-like and Euclidean metrics, and the corresponding formulations of the Riemannian gradient of the cost function. Section~\ref{sec:retraction} is devoted to  retractions,  which are indispensable in Riemannian optimization. We start with reviewing  the Cayley and quasi-geodesic retractions. Then, in the main part of this section, we introduce a new retraction based on an~SR decomposition and discuss the existence conditions and computational issues. Section~\ref{sec:linesearch} recalls the non-monotone line search algorithm based on the Riemannian gradient for solving the minimization problem \eqref{eq:min}. In Section~\ref{sec:appl}, we present three applications with several test models and report  on numerical results illustrating the properties of different Riemannian optimization schemes. Finally, the concluding remarks are given in Section~\ref{sec:concl}.

\textbf{Notation.} 
We denote by $\symset(m)$ and $\skewset(m)$ the sets of all $m\times m$ real symmetric and skew-symmetric  matrices, respectively, and $\skewsym(A)=\tfrac{1}{2}(A-A^T)$ stands for the skew-symmetric part of a square matrix $A$. The determinant, the trace, and the image of a~matrix $A$ are denoted by $\det(A)$, $\tr(A)$, and $\range(A)$, respectively. If $A$ is a complex matrix, $\real(A)$ and $\imag(A)$ denote the real and imaginary part of~$A$, respectively. For $A_{1},\ldots,A_{\ell}$ being square matrices, we denote by $\diag(A_{1},\ldots,A_{\ell})$ the block diagonal matrix.  The Frobenius and spectral matrix norms are denoted by $\|\cdot\|_F$ and $\|\cdot\|_2$, respectively, and the Euclidean vector norm is denoted by $\|\cdot\|$. For a~matrix $M\in\Rbkk$ and $j=1,\ldots,k$, we denote  by $(M)_{1:2j,1:2j}$ the $2j\times 2j$ leading principal submatrix of $M$. 
The dimension of a~subspace $\,\mathcal{U}\subset\mathbb{R}^{n\times m}$ is denoted
by $\dim(\mathcal{U})$.
Finally, for a function~$h$ defined on the Euclidean space $\Rbnk$, $\nabla h$ denotes the standard Euclidean gradient of~$h$.

\section{Riemannian geometry of the symplectic Stiefel manifold}
\label{sec:Stiefel}

In this section, we briefly review the geometric structure of the symplectic Stiefel manifold $\Spkn$ studied recently 
in \cite{GSAS21,GSAS21a}. Let us start with the result confirming that  $\Spkn$ is a~smooth embedded submanifold of the Euclidean space $\mathbb{R}^{2n\times 2k}$ and it has  dimension $4nk-k(2k-1)$, see  \cite[Proposition~3.1]{GSAS21}. Alternatively, one can show that $\Spkn$ is diffeomorphic to a~quotient space $\mathrm{Sp}(2n)/\mathrm{Sp}(2n-2k)$ and, hence, it admits a~structure of a~quotient manifold \cite[Proposition 3.1]{BendZ21}. Further, it has been shown in \cite[Proposition~3.3]{GSAS21} that the \emph{tangent space} of $\Spkn$ at $X\in \Spkn$ is given by 
\begin{align}\label{eq:tangsp_Euclidean1}
\TU &= \!\left\{Z \in \Rbnk\;:\; Z^TJ_{2n}X + X^TJ_{2n}Z = 0\right\}\\
&=\!\left\{ XJ_{2k}W \!+ \!J_{2n}X_\perp K \;:\; W\!\in\!\mathcal{S}_{\rm sym}(2k),
K\!\in\!\mathbb{R}^{(2n-2k)\times 2k} \right\}\!,
\label{eq:tangsp_Euclidean}
\end{align}
where $X_\perp\in\mathbb{R}^{2n\times(2n-2k)}$ has full rank and satisfy $X^TX_\perp=0$.
The choice for $X_\perp$ is obviously not unique and has a certain effect on numerical performance. It has been shown numerically that $X_\perp$ with orthonormal columns, i.e.,  $X_\perp^TX^{}_\perp=I$, is preferred, see \cite[Section 6]{GSAS21} for details. In what follows, we will restrict ourselves to this choice. 

The Riemannian structure of the symplectic Stiefel manifold $\Spkn$ greatly depends on the metric. Let $g_{X}:\TU\times \TU\to\mathbb{R}$ denote a~Riemannian metric on $\Spkn$ at $X$. Note that sometimes, the employed metrics does not depend on $X$. To simplify the notation, we omit the subscript. The \emph{normal space} to $\Spkn$ at $X\in\Spkn$ with respect to $g$ is defined as 
\[
\bigl(\TU\bigr)^{\perp} = \bigl\{ N\in\mathbb{R}^{2n\times 2k}\; : \; g(N,Z)=0 \text{ for all } Z\in \TU\bigr\}.
\]
It is well known that any $Y\in\Rbnk$ can be decomposed as
$$
Y=\proj(Y)+\projn(Y),
$$ 
where
$\proj$ and $\projn$ denote the orthogonal projections with respect to $g$ onto the tangent and normal spaces, respectively. 

The \emph{Riemannian gradient} of a~differentiable function $f:\Spkn\to\mathbb{R}$ at $X\in\Spkn$ with respect to the metric $g$, denoted by $\grad f(X)$, is defined as the unique element of $\TU$ that satisfies the condition 
\[
g\bigl(\grad f(X),Z\bigr) = \mathrm{D}\bar{f}(X)[Z]\qquad
\text{ for all } Z\in \TU,
\] 
where $\bar{f}$ is a~smooth extension of $f$ around $X$ in $\mathbb{R}^{2n\times 2k}$, and $\mathrm{D}\bar{f}(X)$ denotes the Fr\'echet derivative of $\bar{f}$ at~$X$. Using 
\cite[(3.37)]{AbsiMS04}, the Riemannian gradient can be determined as 
\begin{equation}\label{eq:grad_g}
\grad f(X)=\proj\bigl(\grad \bar{f}(X)\bigr).
\end{equation}

Next, we introduce the canonical-like and Euclidean metrics considered in \cite{GSAS21,GSAS21a} and present particular geometric concepts related to these metrics.

\subsection{Canonical-like metric}
A class of metrics on $\Spkn$ has been introduced in \cite{GSAS21}. For a~parameter $\rho>0$ and the tangent vectors $Z_i=XJ_{2k}W_i+J_{2n}X_\perp K_i$ with $W_i\in\symset(2k)$ and $K_i\in\R^{(2n-2k)\times 2k}$ for $i=1,2$, the \emph{canonical-like metric} is defined as
\begin{align*}
g_{c,\rho}(Z_1,Z_2) &:= \frac{1}{\rho}  \tr(W_1^T W_2^{})+\tr(K_1^T K_2^{}).
\end{align*}
The normal space with respect to this metric is then given by 
\[
\bigl(\TU\bigr)_{c}^{\perp} = \bigl\{ XJ_{2k}\varOmega  \enskip: \enskip \varOmega\in\skewset(2k)\bigr\}.
\]
Further, the corresponding orthogonal projections $\projc$ and $\projcn$ onto the tangent and normal spaces have the following form
\[
\projc(Y) =  S_{X,Y}J_{2n}X, \qquad 
\projcn(Y) =  XJ_{2k}\skewsym(X^TJ_{2n}^TY),
\]
where $Y\in\Rbnk$, and
\[
S_{X,Y} =G_XY(XJ_{2k})^T+XJ_{2k}(G_XY)^T, \quad
G_X =I_{2n}-\frac{1}{2}XJ_{2k}^{}X^TJ_{2n}^T,
\]
see \cite[Proposition~4.3]{GSAS21}.
Using \eqref{eq:grad_g}, the Riemannian gradient of a~function $f$ with respect to the metric $g_{c,\rho}$ can then be represented as
\[
\rgradc{X}=\projc(\mathrm{grad}_c\bar{f}(X)) =S_{{X,\nabla \bar{f}}} J_{2n}X
\]
with 
\[
{S_{X,\nabla \bar{f}}}  =H_X\nabla \bar{f}(X)(XJ_{2k})^T+XJ_{2k}(H_X\nabla \bar{f}(X))^T,\quad
H_X  =\frac{\rho}{2}XX^T+J_{2n}X_{\perp}^{}X_{\perp}^TJ_{2n}^T,
\]
see \cite[Proposition~4.5]{GSAS21} for details. 

\subsection{Euclidean metric}
Another metric on $\Spkn$ has been investigated in~\cite{GSAS21a}. For $Z_i=XJ_{2k}W_i+J_{2n}X_\perp K_i\in \TU$ with the matrices $W_i\in\symset(2k)$ and $K_i\in\R^{(2n-2k)\times 2k}$ for $i=1,2$, the 
\emph{Euclidean metric} is defined as
\begin{align*}
g_e(Z_1,Z_2) &:= \tr(Z_1^T Z_2) \\
&~= \tr(W_1^T J_{2k}^T X^T X J_{2k}^{} W_2^{})+
\tr(K_1^T X^T_\perp X_\perp^{} K_2^{})  \\
&\quad\enskip + \tr(W_1^T J_{2k}^T X^T J_{2n}{} X_\perp^{} K_2^{}) + 
\tr(K_1^T X^T_\perp J_{2n}^T X J_{2k}^{} W_2^{}).
\end{align*}
Based on this metric, the normal space to $\Spkn$ can be represented as 
\begin{equation}\label{eq:normal-Eucliean}
\bigl(\TU\bigr)_e^{\perp} = \bigl\{J_{2n}X\varOmega \enskip :\enskip \varOmega\in\skewset(2k)\bigr\},
\end{equation}
see \cite[Proposition~1]{GSAS21a}. Further, using \eqref{eq:tangsp_Euclidean} and \eqref{eq:normal-Eucliean}, we obtain the following expressions for the orthogonal projections $\proje$ and $\projen$ 
onto the tangent  and normal spaces, respectively, with respect to the Euclidean metric \cite[Proposition~2]{GSAS21a}:
\[
\proje(Y) =  Y-J_{2n}X{\varOmega_{X,Y}} ,\qquad
\projen(Y) =  J_{2n}X{\varOmega_{X,Y}}, 
\]
where $\varOmega_{X,Y}\in\skewset(2k)$ is the solution of the Lyapunov equation 
\begin{equation}\label{eq:lyapunov}
X^T X\varOmega+\varOmega\, X^T X =2 \skewsym(X^T J_{2n}^T Y).
\end{equation}
The existence and uniqueness of the solution of the Lyapunov equation \eqref{eq:lyapunov} immediately follows from the fact that the coefficient matrix $X^TX$ is symmetric and positive definite~\cite{GoluV13}. 

Since $\Spkn$ is endowed with the Euclidean metric $g_e$, the corresponding Riemannian gradient $\rgrade{X}$ can directly be calculated by using \eqref{eq:grad_g} as follows 
\begin{equation*} 
\rgrade{X}  = \proje(\mathrm{grad}_e\bar{f}(X)) = \proje(\nabla \bar{f}(X)) = \nabla \bar{f}(X)-J_{2n}X\varOmega_{X,\nabla \bar{f}},
\end{equation*}
where $\varOmega_{X,\nabla \bar{f}}\in\skewset(2k)$ solves the Lyapunov equation 
\begin{equation*}
X^T X\,\varOmega+\varOmega\, X^T X =2 \skewsym\dkh{X^T J_{2n}^T \nabla \bar{f}(X)}.
\end{equation*}

We summarize geometrical notions and their formulations for different metrics on the symplectic Stiefel manifold~$\Spkn$ in Table~\ref{tab:notion_summary}.

\begin{table}[htbp]
	\centering
	\small
	\caption{Geometric objects on the symplectic Stiefel manifold $\Spkn$ for different metrics. 
		Here, $Z_i=XJ_{2k}W_i+J_{2n}X_\perp K_i$ with $W_i\in\symset(2k)$ and $K_i\in\R^{(2n-2k)\times 2k}$, and the definitions of $S_{X,Y}$, $S_{X,\nabla \bar{f}}$, $\varOmega_{X,Y}$, and $\varOmega_{X,\nabla \bar {f}}$ can be found in Section~\textup{\ref{sec:Stiefel}}.}
	\label{tab:notion_summary}
	\begin{tabular}{llll}
		\toprule
		\multicolumn{2}{c}{} & Canonical-like & Euclidean \\\cmidrule[.6pt](r){3-3}\cmidrule[.6pt](r){4-4}
		{metric} & {$g(Z_1,Z_2)$} & {$\frac{1}{\rho}  \tr(W_1^T W_2^{})+\tr(K_1^T K_2^{})$~~}  & {$\tr(Z_1^T Z_2)$}
		\\\cmidrule(r){3-4}
		{normal space}~~ & $\bigl(\TU\bigr)^{\perp}$~~ &  {${XJ_{2k}\varOmega  : \varOmega\in\skewset(2k)}$} & {${J_{2n}X\varOmega  : \varOmega\in\skewset(2k)}$} 
		\\\cmidrule(r){3-4}
		\multirow{2}{*}{projection} & {$\proj(Y)$}  & $S_{X,Y}J_{2n}X$  & $Y-J_{2n}X{\varOmega_{X,Y}}$  \\\cmidrule[.1pt](r){3-4}
		& $\projn(Y)$ & $XJ_{2k}\skewsym(X^TJ_{2n}^TY)$ & $J_{2n}X{\varOmega_{X,Y}}$\\\cmidrule(r){3-4}
		{gradient} & $\grad f(X)$ &  $S_{X,\nabla\bar{f}}\,J_{2n}X$ & $\nabla \bar{f}(X)-J_{2n}X\varOmega_{X,\nabla\bar{f}}$  \\
		\bottomrule
	\end{tabular}
\end{table}

\section{Retractions on $\Spkn$}
\label{sec:retraction}

Retractions play a key role in Riemannian optimization since they allow to transfer data from a tangent space, which contains gradients of smooth functions as its elements, to the manifold, which is the search space in the optimization problem. The concept of retraction on the symplectic Stiefel manifold $\Spkn$ can be defined as follows. Let 
\[
\mathrm{T}\Spkn=\bigcup_{X\in\Spkn}\TU
\] 
be the tangent bundle to $\Spkn$. A~smooth mapping \mbox{$\mathcal{R}:\mathrm{T}\Spkn \to \Spkn$} is called a~\emph{retraction} if for all $X\in\Spkn$, the restriction of $\mathcal{R}$ to $\TU$, denoted by $\mathcal{R}_X$, 
satisfies the following properties:
\begin{enumerate}
	\item[1)] $\mathcal{R}_X(0_X)=X$, where $0_X$ denotes the origin of $\TU$;
	\item[2)] $\left.\tfrac{{\rm d}}{{\rm d}t} \mathcal{R}_X(tZ)\right|_{t=0}=Z$ for all $Z\in \TU$.
\end{enumerate}
The retraction provides a first-order approximation to the Riemannian exponential map 
\cite[Section~5.4]{AbsiMS08} which is of crucial importance in optimization algorithms on Riemannian manifolds. Since retractions are computationally less expensive compared to the exponential map while retaining the convergence properties of optimization schemes, they have 
attracted extensive interest in research. There are different approaches for the construction of retractions. In Subsections~\ref{ssec:Cayley} and~\ref{ssec:qgeo}, we briefly review the Cayley and quasi-geodesic retractions on $\Spkn$ first introduced in \cite{GSAS21} and then, in Subsection~\ref{ssec:SRretr}, we propose a~new one based on an~SR decomposition. 

\subsection{Cayley retraction}
\label{ssec:Cayley}

For $Z\in\TU$,  the \emph{Cayley retraction} on the symplectic Stiefel manifold $\Spkn$ is defined by
\begin{equation}\label{eq:retr_cayley}
\mathcal{R}_X^{\rm cay}(Z) := \Bigl(I_{2n}-\frac{1}{2}S_{X,Z}J_{2n}\Bigr)^{-1}\Bigl(I_{2n}+\frac{1}{2}S_{X,Z}J_{2n}\Bigr)X,
\end{equation}
where $S_{X,Z}=G_XZ(XJ_{2k})^T+XJ_{2k}(G_XZ)^T$ and
\mbox{$G_X=I_{2n}-\frac{1}{2}XJ_{2k}X^T\!J_{2n}^T$}, see \cite[Section~5.2]{GSAS21}. It exists if and only if $2$ is not an~eigenvalue of the Hamiltonian matrix $S_{X,Z}J_{2n}$.
The computation of the Cayley retraction \eqref{eq:retr_cayley} involves a matrix inverse of size $2n\times 2n$. In fact, it can be economically computed by the Sherman--Morrison--Woodbury formula as discussed in \cite[Proposition~5.5]{GSAS21} which requires the inversion of a~$4k\times 4k$ matrix. 
This is certainly advantageous since in most applications, $k$ is considerably smaller than~$n$.
In \cite[Proposition~5.2]{BendZ21}, an equivalent expression for the Cayley retraction 
\[
\mathcal{R}_X^{\rm cay}(Z) = -X+\bigl(H_{X,Z}+2X\bigr)\Bigl(\frac{1}{4}J_{2k}^TH_{X,Z}^TJ_{2n}^{}H_{X,Z}^{}-\frac{1}{2}J_{2k}^TX^TJ_{2n}^{}Z+I_{2k}\Bigr)^{-1}
\]
with $H_{X,Z}=Z-XJ_{2k}^TX^TJ_{2n}^{}Z$ has been presented, which requires solving a~linear system with a~$2k\times 2k$ matrix only. Note that this expression coincides with that 
considered in \cite[Lemma~3.1]{OviH21}. 

\subsection{Quasi-geodesic retraction}
\label{ssec:qgeo}

Given $Z\in \TU$, the \emph{quasi-geodesic retraction} is defined by
\begin{equation}\label{eq:Rqgeo}
\mathcal{R}^{\qgeo}_X(Z) := \fkh{X,\; Z} \exp\dkh{\begin{bmatrix}
	-J_{2k}W &  J_{2k}Z^T J_{2n} Z \\ I_{2k} & -J_{2k}W
	\end{bmatrix}} 
\begin{bmatrix}
I_{2k} \\ 0
\end{bmatrix} 
\exp(J_{2k}W), 
\end{equation}
where  $W=X^T J_{2n}Z$ and $\exp(\cdot)$ denotes the matrix exponential~\cite[Section~5.1]{GSAS21}. 
This retraction is globally defined. Note that the calculation of \eqref{eq:Rqgeo} requires computing two exponentials of matrices of size $4k\times 4k$ and $2k\times 2k$ which is dominating for relatively large~$k$.

\subsection{SR decomposition-based retraction}
\label{ssec:SRretr}

For some matrix manifolds, retractions can also be defined using related matrix decompositions,
e.g.,~\cite{AbsiMS08}.
In the case of the symplectic Stiefel manifold $\Spkn$, the SR decomposition introduced first in \cite{DDora75} appears to be very useful to define a new decomposition-based retraction. 

To begin with, we consider the {\em perfect shuffle permutation matrix}
\begin{equation}\label{eq:P2k}
P_{2k}^{ } = [e_1, e_3,\ldots,e_{2k-1},e_2,\ldots,e_{2k}],
\end{equation} 
where $e_j$, $j=1,\ldots,2k$, is the $j$-th canonical basis vector of $\mathbb{R}^{2k}$. It can be shown by direct calculation that this matrix is orthogonal and 
\begin{equation}
P_{2k}^{}J_{2k}^{}P_{2k}^T  = \diag(J_{2},\ldots,J_2) =:\hat{J}_{2k},
\label{eq:P2kJ2k}
\end{equation} 
where $P_{2k}^T=[e_1, e_{k+1},e_2, e_{k+2},\ldots,e_k,e_{2k}]$.
Further, we introduce a congruence matrix set
\[
T_{2k}(P_{2k})= \{P_{2k}^T\hat{R}P^{ }_{2k}\enskip:\enskip \hat{R}\in \mathbb{R}^{2k\times 2k} \;\text{ is upper triangular}\}.
\]
Then an~\emph{SR decomposition} of $A\in\mathbb{R}^{2n\times 2k}$ with $k\leq n$ is defined as  
\begin{equation}\label{Eq:SRdecomp}
A = SR,
\end{equation}
where $S\in \Spkn$ and $R\in T_{2k}(P_{2k})$. The existence of such a~decomposition for square matrices has been established in \cite[Theorem 3.8]{BunG86}. 
This result can be adapted to rectangular matrices as follows.

\begin{theorem}\label{th:existenceSR}
	Let $A\in\mathbb{R}^{2n\times 2k}$ have full column rank and let $P_{2k}$ be as in~\eqref{eq:P2k}. There exists an SR decomposition $A=SR$ with $S\in \Spkn$ and $R\in T_{2k}(P_{2k})$ if and only if all leading minors of even dimension of the matrix $P_{2k}^{}A^TJ_{2n}AP_{2k}^T$ are nonzero, i.e., $\mathrm{det}(P_{2k}A^TJ_{2n}AP^T_{2k})_{1:2j,1:2j}\neq 0$ for  $j=1,\dots,k$.
\end{theorem}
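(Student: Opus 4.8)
\emph{Proof strategy.} My plan is to strip away the orthogonal conjugation by $P_{2k}$ and reduce the theorem to a ``$\hat J$-Cholesky'' factorization of a skew-symmetric matrix. First I would set $\hat B := P_{2k}^{}A^{T}J_{2n}AP_{2k}^{T}\in\mathbb{R}^{2k\times 2k}$, which is skew-symmetric, and observe the following two implications. If $A=SR$ with $S\in\Spkn$ and $R=P_{2k}^{T}\hat R P_{2k}^{}\in T_{2k}(P_{2k})$ (so $\hat R$ is upper triangular), then, using $S^{T}J_{2n}S=J_{2k}$, the orthogonality of $P_{2k}$, and \eqref{eq:P2kJ2k},
\[
\hat R^{T}\hat J_{2k}^{}\hat R=P_{2k}^{}R^{T}J_{2k}^{}RP_{2k}^{T}=P_{2k}^{}(SR)^{T}J_{2n}^{}(SR)P_{2k}^{T}=\hat B ,
\]
and $R$, hence $\hat R$, is invertible because $A$ has full column rank, so $\rank(R)=2k$. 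Conversely, if $\hat R$ is an invertible upper triangular matrix with $\hat R^{T}\hat J_{2k}\hat R=\hat B$, then $R:=P_{2k}^{T}\hat R P_{2k}\in T_{2k}(P_{2k})$ is invertible, a short computation with \eqref{eq:P2kJ2k} gives $R^{T}J_{2k}R=A^{T}J_{2n}A$, and therefore $S:=AR^{-1}$ satisfies $S^{T}J_{2n}S=R^{-T}(R^{T}J_{2k}R)R^{-1}=J_{2k}$ and $A=SR$. Thus the theorem becomes equivalent to the claim that \emph{a skew-symmetric $\hat B\in\mathbb{R}^{2k\times 2k}$ admits an invertible upper triangular $\hat R$ with $\hat R^{T}\hat J_{2k}\hat R=\hat B$ if and only if $\det\big((\hat B)_{1:2j,1:2j}\big)\neq 0$ for $j=1,\dots,k$}, noting that $(\hat B)_{1:2j,1:2j}$ is precisely the submatrix in the statement.

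The ``only if'' direction of this claim is short: by triangularity of $\hat R$ the cross terms vanish, so $(\hat R^{T}\hat J_{2k}\hat R)_{1:2j,1:2j}=\hat R_j^{T}\hat J_{2j}\hat R_j$ with $\hat R_j:=(\hat R)_{1:2j,1:2j}$ and $\hat J_{2j}=\diag(J_2,\ldots,J_2)\in\mathbb{R}^{2j\times 2j}$ (using that $\hat J_{2k}$ is block diagonal with $2\times 2$ blocks, so $(\hat J_{2k})_{1:2j,1:2j}=\hat J_{2j}$), and then $\det\big((\hat B)_{1:2j,1:2j}\big)=\det(\hat R_j)^{2}\det(\hat J_{2j})=\det(\hat R_j)^{2}\neq 0$ since the diagonal blocks of the invertible triangular $\hat R$ are invertible. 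The ``if'' direction is the core of the argument, and I would prove it by induction on $k$, peeling off the leading $2\times 2$ block. Writing $\hat B=\left[\begin{smallmatrix}\hat B_{11}&\hat b_{12}\\-\hat b_{12}^{T}&\hat B_{22}\end{smallmatrix}\right]$ with $\hat B_{11}\in\mathbb{R}^{2\times 2}$, the hypothesis $\det(\hat B_{11})\neq 0$ together with skew-symmetry gives $\hat B_{11}=\beta J_2$ with $\beta\neq 0$; using the $2\times 2$ identity $M^{T}J_2M=\det(M)J_2$, any upper triangular $\hat R_{11}$ with $\det(\hat R_{11})=\beta$ (e.g. $\hat R_{11}=\diag(\beta,1)$) satisfies $\hat R_{11}^{T}J_2\hat R_{11}=\hat B_{11}$. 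I would then put $\hat R_{12}:=(\hat R_{11}^{T}J_2)^{-1}\hat b_{12}$ and the skew-symmetric Schur complement $\hat B':=\hat B_{22}-\hat R_{12}^{T}J_2\hat R_{12}\in\mathbb{R}^{(2k-2)\times(2k-2)}$, so that $\hat B=L^{T}\diag(J_2,\hat B')\,L$ with the block upper triangular, invertible matrix $L:=\left[\begin{smallmatrix}\hat R_{11}&\hat R_{12}\\0&I\end{smallmatrix}\right]$. Passing to leading $2j\times 2j$ submatrices (legitimate since the lower-right block of $L$ is the identity) yields $(\hat B)_{1:2j,1:2j}=(L)_{1:2j,1:2j}^{T}\diag\big(J_2,(\hat B')_{1:2(j-1),1:2(j-1)}\big)(L)_{1:2j,1:2j}$ with $\det\big((L)_{1:2j,1:2j}\big)=\det(\hat R_{11})=\beta$, hence $\det\big((\hat B')_{1:2(j-1),1:2(j-1)}\big)=\beta^{-2}\det\big((\hat B)_{1:2j,1:2j}\big)\neq 0$ for $j=2,\dots,k$. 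Thus $\hat B'$ meets the induction hypothesis, producing an invertible upper triangular $\hat R_{22}$ with $\hat R_{22}^{T}\hat J_{2k-2}\hat R_{22}=\hat B'$, and a direct block multiplication then shows that $\hat R:=\left[\begin{smallmatrix}\hat R_{11}&\hat R_{12}\\0&\hat R_{22}\end{smallmatrix}\right]$ is invertible upper triangular with $\hat R^{T}\hat J_{2k}\hat R=\hat B$; the base case $k=1$ is exactly the $\hat B_{11}$ computation.

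The hard part will be the bookkeeping around this Schur reduction: showing carefully that the ``even leading minor'' condition really descends from $\hat B$ to $\hat B'$, up to the harmless nonzero factor $\det(\hat R_{11})^{2}=\beta^{2}$, and keeping the perfect-shuffle conjugation straight so that ``$\hat R$ upper triangular'' corresponds to ``$R\in T_{2k}(P_{2k})$'' and the minors that appear are exactly those of $P_{2k}A^{T}J_{2n}AP_{2k}^{T}$. Everything else is routine linear algebra: the identities $M^{T}J_2M=\det(M)J_2$ for $2\times 2$ matrices, $\det(\hat J_{2j})=1$, and the block structure of $\hat J_{2k}$. (For the square case $n=k$ one could instead invoke \cite[Theorem~3.8]{BunG86}; the induction above has the advantage of handling the rectangular case directly.)
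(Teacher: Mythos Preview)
Your proof is correct and follows the same overall route as the paper: both reduce the statement, via the orthogonal conjugation $\hat B=P_{2k}^{}A^{T}J_{2n}AP_{2k}^{T}$ and the identity $S^{T}J_{2n}S=J_{2k}$, to the existence of an invertible upper triangular $\hat R$ with $\hat R^{T}\hat J_{2k}\hat R=\hat B$, and both handle the ``only if'' direction by the same leading-submatrix computation. The one substantive difference is in the ``if'' direction: the paper simply invokes \cite[Theorem~2.2]{BenBFMW00} for the existence of this skew-symmetric Cholesky-like factorization, whereas you prove it from scratch by a $2\times 2$ Schur-complement induction (using $M^{T}J_2M=\det(M)J_2$ and the descent of the even-minor condition to $\hat B'$). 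Your argument is thus more self-contained; the paper's is shorter but relies on the external reference.
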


\begin{proof}
	Assume that $A$ has an~SR decomposition $A=SR$ with $S\in \Spkn$ and $R\in T_{2k}(P_{2k})$. Then we obtain
	\begin{align*}
	P_{2k}^{}A^TJ_{2n}AP_{2k}^T & = P_{2k}^{}R^TS^TJ_{2n}SRP_{2k}^T = P_{2k}^{}R^TJ_{2k}RP_{2k}^T
	= \hat{R}^T\hat{J}_{2k}\hat{R},
	\end{align*}
	where $\hat{R}=P_{2k}^{}RP_{2k}^T$ is upper triangular and $\hat{J}_{2k}$ is as in \eqref{eq:P2kJ2k}. Since $A$ has full column rank,  $R$ and therefore $\hat{R}$ are nonsingular.  In view of these facts, for $j=1,\ldots,k$, the $2j\times 2j$ leading principal submatrices of $\hat{R}^T\hat{J}_{2k}\hat{R}$ have the form 
	\[
	(\hat{R}^T\hat{J}_{2k}\hat{R})_{1:2j,1:2j}=(\hat{R}^T)_{1:2j,1:2j} (\hat{J}_{2k})_{1:2j,1:2j}(\hat{R})_{1:2j,1:2j}
	\]
	and, hence, they are, as a product of nonsingular matrices, nonsingular. Thus,  all leading minors of even dimension of $P_{2k}^{}A^TJ_{2n}AP_{2k}^T$ are nonzero.
	
	On the other hand, if all leading minors of even dimension of the skew-symmetric matrix $P_{2k}^{}A^TJ_{2n}AP_{2k}^T$ are nonzero, then by \cite[Theorem~2.2]{BenBFMW00} this matrix has the Cholesky-like decomposition \begin{equation}\label{eq:CholDec}
	P_{2k}^{}A^TJ_{2n}AP_{2k}^T=\hat{R}^T\hat{J}_{2k}\hat{R}
	\end{equation}
	with a nonsingular upper triangular matrix~$\hat{R}$. In this case, we have
	\[
	A^TJ_{2n}A= P_{2k}^T\hat{R}^T\hat{J}_{2k}\hat{R} P_{2k}^{} 
	=R^TJ_{2k}R,
	\]
	where $R=P_{2k}^T\hat{R} P_{2k}^{}\in T_{2k}(P_{2k})$ is nonsingular. Furthermore, the matrix $S=AR^{-1}$ is symplectic due to
	$S^TJ_{2n}S=R^{-T}A^TJ_{2n}AR^{-1}=J_{2k}$. Thus, $A=SR$ is an SR decomposition.
\end{proof}

Similarly to the square case \cite[Remark 3.9]{BunG86}, the SR decomposition of rectangular matrices is non-unique.
Indeed, if $A$ is decomposed as in \eqref{Eq:SRdecomp}, then for any nonsingular diagonal matrix $D\in\mathbb{R}^{k\times k}$, 
$$
A = (S\diag(D,D^{-1}))(\diag(D^{-1},D)R)
$$
is also an~SR decomposition of $A$. 
The freedom of choice of the factors $S$ and $R$ in \eqref{Eq:SRdecomp} is usually exploited to improve numerical stability, see, e.g., \cite{FassR16,SalaAF08}. 
To serve the purpose of constructing a~retraction on $\Spkn$, we follow \cite{Mehr79} and restrict the factor~$R$ in~\eqref{Eq:SRdecomp} to the matrix set 
\begin{equation}\label{eq:T_2k_0}
\arraycolsep=2pt
\begin{array}{rl}
T_{2k}^0(P_{2k}) =\bigl\{P_{2k}^T\hat{R}P^{ }_{2k}\enskip:\enskip
&\hat{R}=[r_{ij}]\in \mathbb{R}^{2k\times 2k} \text{ is upper triangular with } r_{2j-1,2j} = 0,\bigr. \\ 
&  \bigl.  
\,r_{2j-1,2j-1} > 0, \text{ and } 
|r_{2j,2j}|=r_{2j-1,2j-1}   
 \text{ for } 
j = 1,\ldots, k\bigr\}.
\end{array}
\end{equation}
This choice guarantees the uniqueness of the resulting SR decomposition~\cite{Mehr79}. 
Note that this fact can also be inferred from \cite[Theorem~2.2]{BenBFMW00}, which establishes the uniqueness of the Cholesky-like decomposition \eqref{eq:CholDec}  with $\hat{R}$ as in~\eqref{eq:T_2k_0}.

In order to compute the SR decomposition \eqref{Eq:SRdecomp}, we employ a~symplectic Gram--Schmidt algorithm developed in~\cite{Sala05}. Based on Theorem~\ref{th:existenceSR}, we investigate the 
well-posedness of this algorithm which was not discussed in \cite{Sala05}. For ease of explanation, our consideration is divided into three steps.

First, the SR decomposition of two-column matrices is needed. Given a~matrix $A = [a_1, a_2] \in \mathbb{R}^{2n\times 2}$, 
we are looking for an~{\em elementary SR} (ESR) {\em decomposition} $A=SR$ with a~$2 \times 2$ upper triangular matrix 
\begin{equation}\label{eq:matrR}
R = \begin{bmatrix}
r_{11}&r_{12}\\0 & r_{22}
\end{bmatrix}
\end{equation}
and a~two-column symplectic matrix $S = [s_1, s_2]$. 
It is straightforward to verify that $S= [s_1, s_2]$ is symplectic if and only if
$s_1^TJ_{2n}s_2=1$.
Using this relation, we find that  $r_{11}r_{22} = a_1^TJ_{2n}a_2$
and $r_{12}$ is arbitrary. By Theorem~\ref{th:existenceSR}, the existence of the ESR decomposition $A=SR$ is equivalent to the condition $a_1^TJ_{2n}a_2 \not=0$ which is also known as the non-isotropy condition for the subspace spanned by $a_1$ and $a_2$. Requiring $R\in	T_{2}^0(P_{2})$, we obtain
\begin{equation}\label{Eq:mESRchoosingentries}
r_{12} = 0,\quad r_{11}=\sqrt{|a_1^TJ_{2n}a_2|},\quad r_{22}=\mbox{sign}(a_1^TJ_{2n}a_2)r_{11},
\end{equation}
where $\mbox{sign}(\cdot)$ denotes the sign of the corresponding value. The resulting decomposition is referred  to as the {\em diagonal elementary SR} (DESR) {\em decomposition} 
which can easily be seen to be unique. 
Note that the choice~\eqref{Eq:mESRchoosingentries} corresponds to the version ESR4 in \cite{FassR16} which was proven to yield $R$ with a~minimal condition number. For convenience, we summarize the computation of the DESR decomposition in Algorithm~\ref{Alg:DESR} and collect its properties in the following lemma. 

\begin{algorithm}[htbp]
	\caption{Diagonal elementary SR (DESR) decomposition}
	\label{Alg:DESR}
	\begin{algorithmic}[1]
		\REQUIRE $A =[a_1, a_2]\in \mathbb{R}^{2n\times 2}$.
		\ENSURE $S = [s_1,s_2]\in \mathrm{Sp}(2,2n)$ and $R=\diag(r_{11},r_{22})$ with $0<r_{11}=|r_{22}|$ such that $A=SR$.
		\STATE Compute $\omega = a_1^TJ_{2n}a_2$.
		\IF{$\omega \neq 0$} 
		\STATE Compute $
		\ r_{11}=\sqrt{|\omega|}$ and $r_{22}=\mathrm{sign}(\omega)r_{11}$.
		\STATE Compute $s_1 = a_1/r_{11}$ and $s_2 = a_2/r_{22}$.
		\ELSE 
		\STATE Error: the DESR decomposition does not exist. 
		\ENDIF
	\end{algorithmic}
\end{algorithm}

\begin{lemma}\label{lem:DESR}
	For a matrix $A=[a_1,a_2] \in \mathbb{R}^{2n\times 2}$ with $a_1^TJ_{2n}a_2^{}\neq 0$, Algorithm~\textup{\ref{Alg:DESR}} produces  a~unique DESR decomposition $A=SR$. In particular, if $A$ is symplectic, then $S=A$ and $R=I_2$.
\end{lemma}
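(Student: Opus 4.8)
The plan is to verify the two claims of Lemma~\ref{lem:DESR} directly, using the characterization of the ESR decomposition already derived in the text just above the statement. The existence and uniqueness part is essentially a matter of bookkeeping: for $A=[a_1,a_2]\in\mathbb{R}^{2n\times 2}$ we seek $S=[s_1,s_2]$ symplectic and $R$ upper triangular as in~\eqref{eq:matrR} with $A=SR$. Writing this out columnwise gives $a_1 = r_{11}s_1$ and $a_2 = r_{12}s_1 + r_{22}s_2$. First I would recall that $S$ symplectic is equivalent to $s_1^TJ_{2n}s_2 = 1$, so applying $a_1^TJ_{2n}(\cdot)$ to the second relation and using $a_1^TJ_{2n}a_1 = 0$ yields $a_1^TJ_{2n}a_2 = r_{11}r_{22}\,s_1^TJ_{2n}s_2 = r_{11}r_{22}$. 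Since by hypothesis $\omega:=a_1^TJ_{2n}a_2\neq 0$, both $r_{11}$ and $r_{22}$ are nonzero, hence $s_1 = a_1/r_{11}$ and $s_2 = (a_2 - r_{12}s_1)/r_{22}$ are forced once $r_{11},r_{12},r_{22}$ are fixed. The constraint $R\in T_2^0(P_2)$ from~\eqref{eq:T_2k_0} imposes $r_{12}=0$, $r_{11}>0$, and $|r_{22}|=r_{11}$; combined with $r_{11}r_{22}=\omega$ this pins down $r_{11}=\sqrt{|\omega|}$ and $r_{22}=\mathrm{sign}(\omega)\sqrt{|\omega|}$ uniquely, which is exactly~\eqref{Eq:mESRchoosingentries} and the output of Algorithm~\ref{Alg:DESR}. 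Conversely, I would check that these choices do give a valid decomposition: $r_{11}r_{22}=\mathrm{sign}(\omega)|\omega|=\omega$ and a one-line computation $s_1^TJ_{2n}s_2 = \tfrac{1}{r_{11}r_{22}}a_1^TJ_{2n}a_2 = 1$ confirms $S$ is symplectic, so $A=SR$ holds with the required structure.

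For the second claim, suppose $A$ is itself symplectic, i.e.\ $A^TJ_{2n}A = J_2$. Reading off the $(1,2)$ entry gives $a_1^TJ_{2n}a_2 = 1$, so $\omega=1\neq 0$ and the DESR decomposition exists by the first part. Then $r_{11}=\sqrt{|1|}=1$ and $r_{22}=\mathrm{sign}(1)\cdot 1 = 1$, so $R=I_2$; consequently $S = AR^{-1} = A$. By the uniqueness already established, this is \emph{the} DESR decomposition, proving the ``in particular'' statement.

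There is no serious obstacle here; the lemma is a direct specialization of Theorem~\ref{th:existenceSR} (with $k=1$, where the single even leading minor of $P_2 A^TJ_{2n}A P_2^T$ is just $\det(a_1^TJ_{2n}a_2)$, a scalar, so the nonvanishing condition is precisely $\omega\neq 0$) together with the normalization conditions defining $T_2^0(P_2)$. The only place requiring the slightest care is keeping the sign conventions straight—checking that $\mathrm{sign}(\omega)\sqrt{|\omega|}\cdot\sqrt{|\omega|}=\omega$ for both signs of $\omega$, and verifying that the normalization set~\eqref{eq:T_2k_0} in the $2\times 2$ case reduces exactly to ``$r_{11}>0$, $r_{12}=0$, $|r_{22}|=r_{11}$''—after which everything is forced. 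I would present the argument in the order above: first solve for the entries of $R$ from the symplecticity and triangularity constraints to get existence and uniqueness, then substitute $A$ symplectic to read off $R=I_2$, $S=A$.
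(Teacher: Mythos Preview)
Your proof is correct and follows exactly the line of reasoning the paper lays out in the paragraph preceding the lemma; in fact the paper states the lemma without a separate proof, having already derived $r_{11}r_{22}=a_1^TJ_{2n}a_2$, invoked Theorem~\ref{th:existenceSR}, and imposed the normalization~\eqref{Eq:mESRchoosingentries} from $T_2^0(P_2)$ in the text. One tiny slip in your parenthetical remark: the single even leading minor of $P_2A^TJ_{2n}AP_2^T=A^TJ_{2n}A$ is the determinant of that $2\times2$ skew-symmetric matrix, namely $\omega^2$, not $\det(\omega)$; of course $\omega^2\neq0\iff\omega\neq0$, so the conclusion is unaffected.
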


Second, for $A\in \mathbb{R}^{2n\times 2k}$, we consider a decomposition
\begin{equation}\label{Eq:BSRdecomp}
A = \hat{S}\hat{R},
\end{equation}
where $\hat{S}\in\mathbb{R}^{2n\times 2k}$ is a~\textit{perfect shuffle permuted symplectic} (PSPS) \textit{matrix} satisfying
\begin{equation}
\hat{S}^TJ_{2n}\hat{S} = \hat{J}_{2k},
\label{eq:V}
\end{equation}
and 
\begin{equation} \label{eq:uptrR}
\hat{R} = 
\begin{bmatrix}
\hat{R}_{11}& \cdots&\hat{R}_{1k}\\ &\ddots&\vdots\\&&\hat{R}_{kk}
\end{bmatrix}
\end{equation}
is upper triangular with $\hat{R}_{ij}\in\mathbb{R}^{2\times 2}$ for $1\leq i\leq j\leq k$. Additionally, the blocks $\hat{R}_{jj}$, $1\leq j\leq k$, are assumed to be diagonal with diagonal elements ordered nonincreasingly and having the same absolute value.
Note that the matrix $R$ in \eqref{eq:matrR} with entries satisfying \eqref{Eq:mESRchoosingentries} has this structure.
It follows from \eqref{eq:P2kJ2k} and \eqref{eq:V} that $\hat{S}$ is PSPS if and only if $\hat{S}P_{2k}^{}$ is symplectic, which justifies the name PSPS. 
Moreover, if $\hat{S}$ is PSPS, the $2n\times 2$ blocks $\hat{S}_j$ of the matrix  $\hat{S}=[\hat{S}_1,\dots,\hat{S}_k]$ are symplectic. The following lemma establishes the existence and uniqueness of the decomposition~\eqref{Eq:BSRdecomp}.

\begin{lemma}\label{lem:existenceBSGS}
	Let $A\in\mathbb{R}^{2n\times 2k}$ have full column rank. 
	The decomposition \eqref{Eq:BSRdecomp} with a~PSPS matrix $\hat{S}$ and an~upper triangular matrix~$\hat{R}$ as in \eqref{eq:uptrR} exists and is unique if and only if all leading minors of even dimension of the matrix $A^TJ_{2n}A$ are nonzero, i.e., $\mathrm{det}(A^TJ_{2n}A)_{1:2j,1:2j}\neq 0$ for  $j=1,\dots,k$.
\end{lemma}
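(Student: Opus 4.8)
The plan is to reduce Lemma~\ref{lem:existenceBSGS} to the already-established Theorem~\ref{th:existenceSR} by exploiting the perfect shuffle permutation $P_{2k}$. The key observation is that the block-structured decomposition \eqref{Eq:BSRdecomp} is nothing but a shuffled version of the SR decomposition \eqref{Eq:SRdecomp}: if $A=SR$ with $S\in\Spkn$ and $R=P_{2k}^T\hat{R}_0 P_{2k}^{}\in T_{2k}(P_{2k})$ for an upper triangular $\hat{R}_0$, then setting $\hat{S}=SP_{2k}^T$ and $\hat{R}=P_{2k}^{}RP_{2k}^T=\hat{R}_0$ gives $A=SP_{2k}^TP_{2k}R = \hat{S}\,(P_{2k}R)$; and $P_{2k}R = P_{2k}P_{2k}^T\hat{R}_0 P_{2k}^{} = \hat{R}_0 P_{2k}^{}$, so I should instead write $\hat{S}=SP_{2k}^T$, $\hat{R}=P_{2k}RP_{2k}^T$, and check $\hat{S}\hat{R} = SP_{2k}^TP_{2k}RP_{2k}^T = SRP_{2k}^T$, which is not quite $A$. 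The cleaner route: observe directly that $A=\hat{S}\hat{R}$ with $\hat S$ PSPS and $\hat R$ upper triangular is equivalent, via $\hat S^TJ_{2n}\hat S = \hat J_{2k}$ and \eqref{eq:P2kJ2k}, to $A=(\hat S P_{2k})(P_{2k}^T\hat R)$ where $\hat S P_{2k}\in\Spkn$. So the factor pair $(\hat S,\hat R)$ for $A$ corresponds bijectively to the pair $(S,R)=(\hat S P_{2k},\,P_{2k}^T\hat R)$, and $R=P_{2k}^T\hat R$ is exactly of the form required in $T_{2k}(P_{2k})$ provided $\hat R$ is written as $\hat R = P_{2k}(P_{2k}^T\hat R P_{2k})P_{2k}^T$—i.e. $P_{2k}^T\hat R\in T_{2k}(P_{2k})$ iff $\hat R$ is upper triangular. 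First I would set up this correspondence carefully and note it is a bijection between decompositions of the two types.

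Next I would translate the minor conditions. Theorem~\ref{th:existenceSR} characterizes existence in terms of the leading even-dimensional minors of $P_{2k}^{}A^TJ_{2n}AP_{2k}^T$, whereas the present lemma states them in terms of the leading even-dimensional minors of $A^TJ_{2n}A$ directly. The step here is to show these two families of conditions are equivalent. For a skew-symmetric matrix $M\in\mathbb{R}^{2k\times 2k}$, the $2j\times 2j$ leading principal submatrix of $P_{2k}MP_{2k}^T$ is obtained by the symmetric row/column selection corresponding to indices $\{1,2,\dots,2j\}$ under $P_{2k}$; since $P_{2k}^T=[e_1,e_{k+1},e_2,e_{k+2},\dots,e_k,e_{2k}]$, the first $2j$ columns of $P_{2k}^T$ are $e_1,e_{k+1},\dots,e_j,e_{k+j}$, so $(P_{2k}MP_{2k}^T)_{1:2j,1:2j}$ is the submatrix of $M$ on the index set $\{1,\dots,j\}\cup\{k+1,\dots,k+j\}$. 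This is \emph{not} the leading $2j\times 2j$ block of $M$, so the two minor conditions are a priori different. I expect this mismatch to be the main obstacle, and I would resolve it by going through the Cholesky-like factorization: by \cite[Theorem~2.2]{BenBFMW00}, nonvanishing of the leading even minors of the shuffled matrix is exactly the condition for $P_{2k}A^TJ_{2n}AP_{2k}^T = \hat R^T\hat J_{2k}\hat R$ with $\hat R$ upper triangular; unshuffling gives $A^TJ_{2n}A = (P_{2k}^T\hat R P_{2k})^T J_{2k} (P_{2k}^T\hat R P_{2k})$, and one then checks that nonsingularity of all $(P_{2k}^T\hat R P_{2k})_{1:2j,1:2j}$—equivalently of all $(\hat R)$-leading blocks—matches nonvanishing of the leading even minors of $A^TJ_{2n}A$ by the same determinant-product argument as in the proof of Theorem~\ref{th:existenceSR}. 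Alternatively, and perhaps more cleanly, I would simply re-run the proof of Theorem~\ref{th:existenceSR} verbatim but without conjugating by $P_{2k}$: assuming $A=\hat S\hat R$, compute $A^TJ_{2n}A = \hat R^T\hat S^TJ_{2n}\hat S\hat R = \hat R^T\hat J_{2k}\hat R$, note $\hat R$ is nonsingular since $A$ has full column rank, and read off that $(A^TJ_{2n}A)_{1:2j,1:2j} = (\hat R^T)_{1:2j,1:2j}(\hat J_{2k})_{1:2j,1:2j}(\hat R)_{1:2j,1:2j}$ is a product of nonsingular matrices; conversely, nonvanishing of these minors gives the Cholesky-like factorization $A^TJ_{2n}A=\hat R^T\hat J_{2k}\hat R$ directly from \cite[Theorem~2.2]{BenBFMW00} (applied to $A^TJ_{2n}A$ with the identity in place of $P_{2k}$, since $\hat J_{2k}=\diag(J_2,\dots,J_2)$ is already block-diagonal), and then $\hat S = A\hat R^{-1}$ satisfies $\hat S^TJ_{2n}\hat S=\hat J_{2k}$, i.e. $\hat S$ is PSPS.

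Finally I would handle uniqueness. The normalization imposed on the diagonal blocks $\hat R_{jj}$—diagonal, with nonincreasingly ordered entries of equal absolute value—is precisely the block form corresponding to $T_{2k}^0(P_{2k})$ in \eqref{eq:T_2k_0}: the condition $r_{2j-1,2j}=0$ makes $\hat R_{jj}$ diagonal, $r_{2j-1,2j-1}>0$ with $|r_{2j,2j}|=r_{2j-1,2j-1}$ gives equal absolute values with the larger (or tied) entry first. Hence uniqueness of \eqref{Eq:BSRdecomp} follows from the uniqueness of the Cholesky-like factorization with $\hat R$ so normalized, which is the second assertion of \cite[Theorem~2.2]{BenBFMW00} (also recorded after \eqref{eq:T_2k_0}); equivalently, it is the shuffled image of the uniqueness statement already invoked for $T_{2k}^0(P_{2k})$. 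The one point to verify is that the correspondence $(\hat S,\hat R)\mapsto(A\hat R^{-1},\hat R)$ is genuinely a bijection onto normalized decompositions, which is immediate once existence is in hand. I would write the proof in two short paragraphs: one mirroring the proof of Theorem~\ref{th:existenceSR} for existence, and one citing \cite[Theorem~2.2]{BenBFMW00} together with the discussion around \eqref{eq:T_2k_0} for uniqueness.
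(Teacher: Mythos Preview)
Your final approach---re-running the proof of Theorem~\ref{th:existenceSR} verbatim with $\hat J_{2k}$ in place of $J_{2k}$ and no $P_{2k}$ conjugation (so that $A^TJ_{2n}A=\hat R^T\hat J_{2k}\hat R$ and the block-diagonal structure of $\hat J_{2k}$ lets you peel off leading $2j\times 2j$ blocks), then invoking the diagonal-block normalization together with \cite[Theorem~2.2]{BenBFMW00} for uniqueness---is correct and is exactly what the paper does: its proof is literally the sentence ``analogously to Theorem~\ref{th:existenceSR}'' plus a clause on uniqueness. Your initial detour via a direct bijection can in fact be made to work cleanly (the decomposition~\eqref{Eq:BSRdecomp} of $A$ is the SR decomposition of $AP_{2k}$ after setting $\hat S=SP_{2k}^T$, $\hat R=P_{2k}RP_{2k}^T$, and Theorem~\ref{th:existenceSR} applied to $AP_{2k}$ gives the minor condition on $P_{2k}(AP_{2k})^TJ_{2n}(AP_{2k})P_{2k}^T=A^TJ_{2n}A$), but the re-run argument you settle on is what the paper intends.
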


\begin{proof}
	The necessity and sufficiency for the existence of the decomposition~\eqref{Eq:BSRdecomp}
	can be proved analogously to Theorem~\ref{th:existenceSR}. The special choice of the diagonal blocks of $\hat{R}$ guarantees the uniqueness.
\end{proof}

Partitioning $A =[A_1,\ldots, A_k]$ with $A_j\in\mathbb{R}^{2n\times 2}$
makes it possible to use the DESR decompositions for computing the $2n\times 2$ symplectic blocks $\hat{S}_j$ of 
$\hat{S} = [\hat{S}_1,\ldots,\hat{S}_k]$ and the $2\times 2$ blocks {$\hat{R}_{jj}$} of $\hat{R}$ in~\eqref{eq:uptrR}. To this end, we consider 
\[
\hat{J}_{2k}^T\hat{S}^TJ_{2n}^{}A= \hat{J}_{2k}^T \hat{S}^TJ_{2n}^{}\hat{S}\hat{R}=\hat{J}_{2k}^T\hat{J}_{2k}^{}\hat{R}=\hat{R}, 
\]
where equalities follow from \eqref{Eq:BSRdecomp} and \eqref{eq:V}. According to the block partition of~$A$ and $\hat{S}$, the block diagonal structure of $\hat{J}_{2k}$, and the block upper triangular structure of $\hat{R}$, we can work out a~block Gram--Schmidt-type algorithm for computing the decomposition~\eqref{Eq:BSRdecomp}, see Algorithm~\ref{Alg:BSGS}.

\begin{algorithm}[htbp]
	\caption{Basic symplectic Gram--Schmidt algorithm} 
	\begin{algorithmic}[1]
		\REQUIRE $A =[A_1,\ldots, A_k]\in \mathbb{R}^{2n\times 2k}$ with $A_j\in\mathbb{R}^{2n\times 2}$ for $j=1,\ldots,k$.
		\ENSURE A~PSPS matrix $\hat{S} = [\hat{S}_1,\ldots,\hat{S}_k]$ and an~upper triangular matrix~$\hat{R}$ as in~\eqref{eq:uptrR} such that $A=\hat{S}\hat{R}$.
		\STATE Compute the DESR decomposition $A_1 = \hat{S}_1\hat{R}_{11}$ using Algorithm~\ref{Alg:DESR}.
		\FOR{$j=2,\ldots,k$} 
		\FOR{$i=1,\ldots,j-1$}
		\STATE Compute $\hat{R}_{ij} = J_2^T \hat{S}_i^TJ_{2n}A_j$.
		\ENDFOR		
		\STATE Compute $W_j = A_j - \sum\limits_{i=1}^{j-1}\hat{S}_i\hat{R}_{ij}$.
		\STATE Compute the DESR decomposition $W_j = \hat{S}_j\hat{R}_{jj}$ using Algorithm~\ref{Alg:DESR}. 
		\ENDFOR
	\end{algorithmic}
	\label{Alg:BSGS}
\end{algorithm}

The following lemma provides the sufficient conditions for the existence of the DESR decomposition of the matrices $A_1$ and $W_j$, $j=2,\ldots, k$, in Algorithm~\ref{Alg:BSGS}.

\begin{lemma}\label{lem:BSGS}
	Let $A=[A_1,\ldots,A_k]$ with $A_j\in\mathbb{R}^{2n\times 2}$ for $j=1,\ldots,k$ be such that all leading minors of even dimension of the matrix $A^TJ_{2n}A$ are nonzero. Then Algorithm~\textup{\ref{Alg:BSGS}} produces the decomposition \eqref{Eq:BSRdecomp} with a PSPS matrix $\hat{S}$ and 
	an~upper triangular matrix~$\hat{R}$ as in \eqref{eq:uptrR} without breakdown. In particular, if~$A$ is PSPS, then $\hat{S}=A$ and $\hat{R}=I_{2k}$.
\end{lemma}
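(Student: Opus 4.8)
The statement to prove is Lemma~\ref{lem:BSGS}: under the hypothesis that all leading minors of even dimension of $A^TJ_{2n}A$ are nonzero, Algorithm~\ref{Alg:BSGS} runs to completion, producing the decomposition~\eqref{Eq:BSRdecomp}, and it reduces to the identity when $A$ is already PSPS. The natural strategy is induction on the column-block index~$j$, tracking the invariant that after step~$j$ the first $2j$ columns of $A$ have been correctly factored, i.e. $[A_1,\dots,A_j]=[\hat S_1,\dots,\hat S_j]\,(\hat R)_{1:2j,1:2j}$ with $[\hat S_1,\dots,\hat S_j]$ a PSPS matrix (equivalently $\hat S_i^TJ_{2n}\hat S_\ell=(\hat J_{2k})_{2i-1:2i,2\ell-1:2\ell}$ for $i,\ell\le j$) and $(\hat R)_{1:2j,1:2j}$ of the required triangular form. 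The base case $j=1$ is exactly Lemma~\ref{lem:DESR} applied to $A_1$, provided $A_1^TJ_{2n}A_1\neq 0$; but this scalar is, up to sign, $\det(A^TJ_{2n}A)_{1:2,1:2}$, which is nonzero by hypothesis, so Algorithm~\ref{Alg:DESR} does not hit its error branch.

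For the inductive step I would argue as follows. Assume the invariant holds after step $j-1$. In the inner loop the algorithm sets $\hat R_{ij}=J_2^T\hat S_i^TJ_{2n}A_j$ for $i<j$ and then forms $W_j=A_j-\sum_{i=1}^{j-1}\hat S_i\hat R_{ij}$. The key computation is to check that $W_j$ is $\hat J$-orthogonal to $\hat S_1,\dots,\hat S_{j-1}$, i.e. $\hat S_i^TJ_{2n}W_j=0$ for $i<j$: this follows by expanding, using $\hat S_i^TJ_{2n}\hat S_\ell = \delta_{i\ell}J_2$ from the inductive PSPS property, and the definition of $\hat R_{ij}$ (the $J_2J_2^T=I_2$ cancellation is the routine part). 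Hence the pair $([\hat S_1,\dots,\hat S_{j-1}], W_j)$ spans the same column space as $[A_1,\dots,A_j]$ and $W_j$ lies in the $\hat J$-orthogonal complement of the earlier blocks. It remains to show that the DESR decomposition of $W_j$ exists, i.e. $w_1^TJ_{2n}w_2\neq0$ where $W_j=[w_1,w_2]$. For this I would relate $W_j^TJ_{2n}W_j$ to the leading $2j\times 2j$ submatrix of $A^TJ_{2n}A$: because $[A_1,\dots,A_j] = [\hat S_1,\dots,\hat S_{j-1},W_j]\,U$ for an invertible block-upper-triangular $U$ with identity diagonal blocks (the first $j-1$ being $(\hat R)_{1:2(j-1),1:2(j-1)}$ and the last being $I_2$), congruence gives
\[
(A^TJ_{2n}A)_{1:2j,1:2j} = U^T\,\diag\bigl(\hat J_{2(j-1)},\,W_j^TJ_{2n}W_j\bigr)\,U,
\]
so taking determinants, $\det(A^TJ_{2n}A)_{1:2j,1:2j}=\pm\det(W_j^TJ_{2n}W_j)=\pm (w_1^TJ_{2n}w_2)^2$. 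By hypothesis the left side is nonzero, hence $w_1^TJ_{2n}w_2\neq0$ and Lemma~\ref{lem:DESR} applies, producing $\hat S_j$, $\hat R_{jj}$ and extending the invariant to index~$j$. Running the induction to $j=k$ gives the full decomposition~\eqref{Eq:BSRdecomp}, with $\hat R$ of the form~\eqref{eq:uptrR} and the required diagonal structure on the blocks $\hat R_{jj}$ inherited from Lemma~\ref{lem:DESR} (equivalently~\eqref{Eq:mESRchoosingentries}); uniqueness is already covered by Lemma~\ref{lem:existenceBSGS} and need not be reproven.

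For the final ``in particular'' claim, suppose $A$ is PSPS, so $A^TJ_{2n}A=\hat J_{2k}$ and each $A_j^TJ_{2n}A_j=J_2$. Then $\det(A^TJ_{2n}A)_{1:2j,1:2j}=1\neq0$, so the algorithm runs without breakdown. By Lemma~\ref{lem:DESR}, $A_1^TJ_{2n}A_1=J_2$ gives $\hat S_1=A_1$, $\hat R_{11}=I_2$; inductively, once $\hat S_i=A_i$ for $i<j$, the PSPS relations give $\hat R_{ij}=J_2^T A_i^TJ_{2n}A_j=J_2^T(\hat J_{2k})_{2i-1:2i,2j-1:2j}=0$, so $W_j=A_j$, and since $A_j^TJ_{2n}A_j=J_2$, Lemma~\ref{lem:DESR} yields $\hat S_j=A_j$, $\hat R_{jj}=I_2$. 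Hence $\hat S=A$ and $\hat R=I_{2k}$.

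**Main obstacle.** The only genuinely non-mechanical point is the congruence identity linking $W_j^TJ_{2n}W_j$ to $(A^TJ_{2n}A)_{1:2j,1:2j}$ and extracting from it the non-degeneracy of $W_j$; everything else is bookkeeping with the $\hat J$-orthogonality relations and invocations of Lemma~\ref{lem:DESR}. One should be a little careful that the change-of-basis matrix $U$ indeed has unit determinant blocks on the diagonal so that the determinant of the congruence drops to $\pm\det(W_j^TJ_{2n}W_j)$ — this is where the particular normalization of the off-diagonal blocks $\hat R_{ij}$ in the algorithm (as opposed to the $\hat R_{jj}$) matters.
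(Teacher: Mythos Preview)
Your argument is correct and follows the same inductive strategy as the paper: both show that each $W_j^TJ_{2n}W_j$ is nonsingular by relating it to the $2j$-th leading principal submatrix of $A^TJ_{2n}A$. The paper does this by computing $W_j^TJ_{2n}W_j$ directly and recognising it as the Schur complement of $B_{2(j-1)}^TJ_{2n}B_{2(j-1)}$ in $(A^TJ_{2n}A)_{1:2j,1:2j}$, whereas you first establish the $\hat J$-orthogonality $\hat S_i^TJ_{2n}W_j=0$ and then read off the same conclusion from the block-congruence; these are two phrasings of the same identity. For the ``in particular'' clause the paper simply invokes the uniqueness statement of Lemma~\ref{lem:existenceBSGS}, while your direct step-by-step verification is equally valid.

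One small slip: the determinant identity should be
\[
\det\bigl((A^TJ_{2n}A)_{1:2j,1:2j}\bigr)=\det(\hat R_{2(j-1)})^{2}\,\det\bigl(W_j^TJ_{2n}W_j\bigr),
\]
not $\pm\det(W_j^TJ_{2n}W_j)$, since the diagonal blocks $\hat R_{ii}$ have determinant $r_{11}r_{22}=w_1^TJ_{2n}w_2$, not $\pm1$. This does not affect your conclusion---$\hat R_{2(j-1)}$ is nonsingular by the inductive hypothesis, so nonvanishing on the left still forces $\det(W_j^TJ_{2n}W_j)\neq0$---and it makes the concern you raise in your ``obstacle'' paragraph moot: you need the diagonal blocks of $U$ to be nonsingular, not of unit determinant.
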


\begin{proof}
	We show by finite induction that the matrices $W_1=A_1$, $W_2,\ldots, W_k$ in Algorithm~\ref{Alg:BSGS} have the DESR decomposition. By assumption, the matrix  $W_1^TJ_{2n}W_1^{}=A_1^TJ_{2n}A_1^{}$ is nonsingular, and hence by Lemma~\ref{lem:DESR}, the matrix $W_1$ has the DESR decomposition. Assume that $j-1$ steps in Algorithm~\ref{Alg:BSGS} have been executed. For simplicity reasons, we introduce the following matrices 
	$B_{2(j-1)}=[A_1,\ldots,A_{j-1}]$, $Q_{2(j-1)}=[\hat{S}_1,\ldots,\hat{S}_{j-1}]$, and 
	\begin{align*}
	\hat{R}_{2(j-1)} & = \begin{bmatrix} \hat{R}_{11} & \cdots & \hat{R}_{1,j-1}\\ & \ddots & \vdots \\ & & \hat{R}_{j-1,j-1}\end{bmatrix}.
	\end{align*}
	Note that by construction, $\hat{R}_{2(j-1)}$ is nonsingular. Using the expressions for $\hat{R}_{ij}$ and $W_j$ in steps~4 and 6 of Algorithm~\ref{Alg:BSGS}, respectively, the matrix $W_j$ can be represented as 
	$W_j^{}=A_j^{}-Q_{2(j-1)}^{}\hat{J}_{2(j-1)}^TQ_{2(j-1)}^TJ_{2n}^{}A_j^{}$. Then we have
	\begin{align*}
	W_j^TJ_{2n}^{}W_j^{}& =A_j^TJ_{2n}^{}A_j^{}-2A_j^TJ_{2n}^TQ_{2(j-1)}^{}\hat{J}_{2(j-1)}^{}Q_{2(j-1)}^TJ_{2n}^{}A_j \\
	&\quad + A_j^TJ_{2n}^TQ_{2(j-1)}^{}\hat{J}_{2(j-1)}^{}Q_{2(j-1)}^TJ_{2n}^{} Q_{2(j-1)}^{}\hat{J}_{2(j-1)}^TQ_{2(j-1)}^TJ_{2n}A_j \\
	& = A_j^TJ_{2n}^{}A_j^{}-A_j^TJ_{2n}^TQ_{2(j-1)}^{}\hat{J}_{2(j-1)}^{}Q_{2(j-1)}^TJ_{2n}^{}A_j^{}.
	\end{align*}
	It follows from $B_{2(j-1)}=Q_{2(j-1)}\hat{R}_{2(j-1)}$ that $$
	Q_{2(j-1)}^TJ_{2n}A_j=\hat{R}_{2(j-1)}^{-T}B_{2(j-1)}^TJ_{2n}A_j.
	$$
	Therefore,
	\begin{align*}
	W_j^TJ_{2n}^{}W_j^{}& = A_j^TJ_{2n}^{}A_j^{}\!-\!A_j^TJ_{2n}^TB_{2(j-1)}^{}\hat{R}_{2(j-1)}^{-1} \hat{J}_{2(j-1)}^{}\hat{R}_{2(j-1)}^{-T}B_{2(j-1)}^TJ_{2n}^{}A_j^{} \\
	& = A_j^TJ_{2n}^{}A_j^{} \!+\!\bigl(\!A_j^TJ_{2n}^TB_{2(j-1)}^{}\bigr)\! \bigl(\!\hat{R}_{2(j-1)}^{T}\hat{J}_{2(j-1)}^{}\hat{R}_{2(j-1)}^{}\bigr)^{-1}\!\bigl(\!B_{2(j-1)}^TJ_{2n}^{}A_j^{}\bigr) \\
	& = A_j^TJ_{2n}^{}A_j^{} \!+\!\bigl(\!A_j^TJ_{2n}^TB_{2(j-1)}^{}\bigr)\! \bigl(B_{2(j-1)}^{T}J_{2n}^{}B_{2(j-1)}^{}\bigr)^{-1}\!\bigl(\!B_{2(j-1)}^TJ_{2n}^{}A_j^{}\bigr)
	\end{align*}
	is the Schur complement of the block $B_{2(j-1)}^{T} J_{2n}^{}B_{2(j-1)}^{}$ of the matrix 
	\[
	\begin{bmatrix} B_{2(j-1)}^{T} J_{2n}^{}B_{2(j-1)}^{} &  B_{2(j-1)}^{T}J_{2n}^{}A_j^{} \\
	-A_j^TJ_{2n}^T B_{2(j-1)}^{T} & A_j^TJ_{2n}^{} A_j^{}\end{bmatrix} =[A_1,\ldots,A_j]^TJ_{2n}[A_1,\ldots,A_j].
	\]
	Since this matrix is nonsingular, the Schur complement $W_j^TJ_{2n}^{}W_j^{}$ is also nonsingular. Thus, by Lemma~\ref{lem:DESR}, $W_j$ has the DESR decomposition.   
	The particular case for a~PSPS matrix $A$ follows from the uniqueness of the decomposition~\eqref{Eq:BSRdecomp}.
\end{proof}

Finally, once the basic symplectic Gram--Schmidt algorithm with the DESR decompositions described above is well-defined, the computation of the SR decomposition~\eqref{Eq:SRdecomp}
requires just two extra permutation steps as presented in Algorithm~\ref{Alg:SGS}. 

\begin{algorithm}[htbp]
	\caption{Symplectic Gram--Schmidt algorithm} 
	\label{Alg:SGS}
	\begin{algorithmic}[1]
		\REQUIRE $A =[A_1,\ldots, A_k]\in \mathbb{R}^{2n\times 2k}$ with $A_j\in\mathbb{R}^{2n\times 2}$ for $j=1,\ldots,k$.
		\ENSURE $S \in \Spkn$ and $R\in T_{2k}^0(P_{2k})$ such that $A=SR$.
		\STATE Compute the decomposition $AP^T_{2k}=\hat{S}\hat{R}$ by  using Algorithm~\ref{Alg:BSGS}.
		\STATE Compute $S=\hat{S}P^{}_{2k}$.
		\STATE Compute $R = P^T_{2k}\hat{R}P^{}_{2k}$.
	\end{algorithmic}
\end{algorithm}

Using Lemma~\ref{lem:BSGS}, we can establish the following properties of~Algorithm~\ref{Alg:SGS}.

\begin{proposition}\label{prop:SGS}
	Let $A\in\mathbb{R}^{2n\times 2k}$ and let $P_{2k}$ be as in \eqref{eq:P2k}.
	Assume that all leading minors of even dimension of $P_{2k}^{}A^TJ_{2n}AP_{2k}^T$ are nonzero.
	Then Algorithm~\textup{\ref{Alg:SGS}} produces a~unique SR decomposition \eqref{Eq:SRdecomp} with $S\in\Spkn$ and $R\in T_{2k}^0(P_{2k})$.
\end{proposition}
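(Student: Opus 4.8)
The plan is to reduce Proposition~\ref{prop:SGS} to the two already-established ingredients: Theorem~\ref{th:existenceSR}, which guarantees that an SR decomposition $A=SR$ with $S\in\Spkn$ and $R\in T_{2k}(P_{2k})$ exists precisely when the even-dimensional leading minors of $P_{2k}^{}A^TJ_{2n}AP_{2k}^T$ are nonzero (in particular, $A$ then has full column rank, since $R$ is forced to be nonsingular by these minor conditions), and Lemma~\ref{lem:BSGS}, which describes the output of the inner Algorithm~\ref{Alg:BSGS}. So the argument has two parts: first, verify that the hypothesis of the proposition is exactly what is needed to invoke Lemma~\ref{lem:BSGS} on the permuted matrix $AP_{2k}^T$; second, trace the permutation bookkeeping in steps~2 and~3 of Algorithm~\ref{Alg:SGS} to confirm that $S=\hat S P_{2k}$ lands in $\Spkn$ and $R=P_{2k}^T\hat R P_{2k}$ lands in $T_{2k}^0(P_{2k})$.

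First I would observe that the matrix fed to Algorithm~\ref{Alg:BSGS} in step~1 is $A'\!:=AP_{2k}^T$, so the relevant minor condition is on $(A')^TJ_{2n}A' = P_{2k}^{}A^TJ_{2n}AP_{2k}^T$; by hypothesis all its even-dimensional leading minors are nonzero, which is exactly the assumption of Lemma~\ref{lem:BSGS}. Hence Algorithm~\ref{Alg:BSGS} runs to completion without breakdown and returns a PSPS matrix $\hat S$ and an upper triangular $\hat R$ as in \eqref{eq:uptrR} (with each diagonal block $\hat R_{jj}$ diagonal, diagonal entries ordered nonincreasingly in absolute value and of equal modulus, as produced by the DESR steps and recorded in \eqref{Eq:mESRchoosingentries}) satisfying $AP_{2k}^T=\hat S\hat R$. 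Right-multiplying by $P_{2k}$ gives $A=\hat S P_{2k}\cdot P_{2k}^T\hat R P_{2k}=SR$ with $S,R$ as defined in steps~2--3, so the factorization identity is immediate.

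Next I would check the structural memberships. For $S$: since $\hat S$ is PSPS, \eqref{eq:V} gives $\hat S^TJ_{2n}\hat S=\hat J_{2k}$, and then $S^TJ_{2n}S=P_{2k}^T\hat S^TJ_{2n}\hat S P_{2k}=P_{2k}^T\hat J_{2k}P_{2k}=J_{2k}$ by \eqref{eq:P2kJ2k} (using orthogonality of $P_{2k}$), so $S\in\Spkn$. For $R$: by definition of $T_{2k}(P_{2k})$ and $T_{2k}^0(P_{2k})$ in \eqref{eq:T_2k_0}, membership $R=P_{2k}^T\hat R P_{2k}\in T_{2k}^0(P_{2k})$ is equivalent to $\hat R$ being upper triangular with the sign/modulus normalization $r_{2j-1,2j}=0$, $r_{2j-1,2j-1}>0$, $|r_{2j,2j}|=r_{2j-1,2j-1}$; these are precisely the properties of the $2\times 2$ diagonal blocks $\hat R_{jj}$ enforced by \eqref{Eq:mESRchoosingentries} in the DESR calls (after noting $r_{11}=\sqrt{|\omega|}>0$ since $\omega\neq0$, and $r_{22}=\mathrm{sign}(\omega)r_{11}$), while the strictly-upper-triangular blocks $\hat R_{ij}$, $i<j$, are unconstrained. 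Thus $R\in T_{2k}^0(P_{2k})$. Uniqueness then follows directly from the uniqueness clause of Lemma~\ref{lem:BSGS} (or Lemma~\ref{lem:existenceBSGS}) for $\hat S,\hat R$, transported through the fixed invertible change of variables $(\hat S,\hat R)\mapsto(\hat S P_{2k},P_{2k}^T\hat R P_{2k})$: two SR decompositions of $A$ with factors in $\Spkn\times T_{2k}^0(P_{2k})$ correspond, after this bijection, to two decompositions of $AP_{2k}^T$ of the type covered by Lemma~\ref{lem:existenceBSGS}, which must coincide.

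I do not expect a genuine obstacle here—the proposition is essentially an assembly of earlier results—so the ``hard part'' is really just the permutation bookkeeping: being careful that the minor hypothesis stated in terms of $P_{2k}^{}A^TJ_{2n}AP_{2k}^T$ matches the $(AP_{2k}^T)^TJ_{2n}(AP_{2k}^T)$ appearing inside Lemma~\ref{lem:BSGS}, and that the normalization \eqref{eq:T_2k_0} on $R$ corresponds under conjugation by $P_{2k}$ to the normalization of the DESR diagonal blocks rather than to some shuffled variant. Once those two matchings are spelled out, the proof is a two-line invocation of Lemma~\ref{lem:BSGS} plus the identities \eqref{eq:P2kJ2k}, \eqref{eq:V}, and \eqref{Eq:mESRchoosingentries}.
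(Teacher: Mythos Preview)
Your proposal is correct and follows essentially the same route as the paper's proof: apply Lemma~\ref{lem:BSGS} to $AP_{2k}^T$ to obtain $\hat S,\hat R$, then undo the permutation via $S=\hat S P_{2k}$ and $R=P_{2k}^T\hat R P_{2k}$, checking with \eqref{eq:P2kJ2k} and \eqref{Eq:mESRchoosingentries} that these land in $\Spkn$ and $T_{2k}^0(P_{2k})$. You are simply a bit more explicit than the paper about the $T_{2k}^0(P_{2k})$ verification and the uniqueness transport, but the argument is the same.
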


\begin{proof}
	The existence and uniqueness of the SR decomposition~\eqref{Eq:SRdecomp} provided by  Algorithm~\ref{Alg:SGS} follow from that of the decomposition $AP^T_{2k}=\hat{S}\hat{R}$ with a~PSPS matrix~$\hat{S}$ and an upper triangular matrix $\hat{R}$ as in~\eqref{eq:uptrR}  which is guaranteed by Lemma~\ref{lem:BSGS}. Therefore, the decomposition $AP^T_{2k}=\hat{S}\hat{R} = \hat{S}P^{}_{2k}P^T_{2k}\hat{R}$ yields that $A=(\hat{S}P^{}_{2k})(P^T_{2k}\hat{R}P^{}_{2k}) = SR$,
	where  $S=\hat{S}P^{}_{2k}\in \Spkn$ and \mbox{$R=P^T_{2k}\hat{R}P^{}_{2k}\in T_{2k}^0(P_{2k})$}.
\end{proof}

\begin{remark}
	For clarity of theoretical discussion, we have presented here the basic symplectic Gram--Schmidt algorithm only. In practice, however, similarly to the Gram--Schmidt orthonormalization process~\textup{\cite{GoluV13}}, for a~better numerical behavior, a~modified basic symplectic Gram--Schmidt procedure should be used. This procedure leads to a~modified symplectic Gram--Schmidt algorithm, see \textup{\cite{Sala05}} for more details, which is indeed employed in numerical experiments reported in Section~\textup{\ref{sec:appl}}.
\end{remark}

We are now ready to introduce a new retraction on $\Spkn$ which is based on the SR decomposition.

\begin{theorem}\label{theo:SRretraction}
	Given $X \in \Spkn$ and a tangent vector $Z \in \TU$, denote by $\mathrm{sf}(X+Z)$ the factor $S\in \Spkn$ in the SR decomposition $X+Z=SR$ computed by Algorithm~\textup{\ref{Alg:SGS}}.
	Then the mapping
	\begin{equation}\label{eq:retraction}
	\mathcal{R}_X^{\rm SR}(Z) =\mathrm{sf}(X+Z)
	\end{equation}
	defines a~retraction on $\Spkn$.
\end{theorem}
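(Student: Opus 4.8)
The strategy is to verify the two defining properties of a retraction for the map $\mathcal{R}_X^{\rm SR}$ in \eqref{eq:retraction}. Before that, one must first check that the map is well-defined near the zero section: by Proposition~\ref{prop:SGS} (equivalently Theorem~\ref{th:existenceSR}), Algorithm~\ref{Alg:SGS} produces a unique $S\in\Spkn$ provided all leading minors of even dimension of $P_{2k}^{}(X+Z)^TJ_{2n}(X+Z)P_{2k}^T$ are nonzero. At $Z=0_X$ we have $(X+Z)^TJ_{2n}(X+Z)=X^TJ_{2n}X=J_{2k}$, whose perfect-shuffle conjugate $\hat J_{2k}$ has all even leading principal minors equal to $1$; by continuity the same holds for $Z$ in a neighborhood of $0_X$ in $\TU$, so $\mathrm{sf}$ is defined and smooth there. (Smoothness follows since the symplectic Gram--Schmidt steps are compositions of rational maps and square roots of quantities that stay positive near $Z=0_X$.)

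\textbf{Property 1: centering.} At $Z=0_X$ we must show $\mathrm{sf}(X)=X$. Since $X\in\Spkn$, the matrix $XP_{2k}^T$ satisfies $(XP_{2k}^T)^TJ_{2n}(XP_{2k}^T)=P_{2k}J_{2k}P_{2k}^T=\hat J_{2k}$, i.e.\ $XP_{2k}^T$ is a PSPS matrix. By the ``in particular'' clause of Lemma~\ref{lem:BSGS}, Algorithm~\ref{Alg:BSGS} then returns $\hat S=XP_{2k}^T$ and $\hat R=I_{2k}$, so Algorithm~\ref{Alg:SGS} returns $S=\hat SP_{2k}=X$ and $R=I_{2k}$. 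Hence $\mathcal{R}_X^{\rm SR}(0_X)=\mathrm{sf}(X)=X$.

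\textbf{Property 2: local rigidity.} Fix $Z\in\TU$ and consider the curve $t\mapsto \gamma(t):=\mathcal{R}_X^{\rm SR}(tZ)=\mathrm{sf}(X+tZ)$ for $t$ near $0$; we must show $\gamma'(0)=Z$. Write the SR decomposition $X+tZ=S(t)R(t)$ with $S(t)=\gamma(t)\in\Spkn$ and $R(t)\in T_{2k}^0(P_{2k})$, both smooth in $t$ with $S(0)=X$ and $R(0)=I_{2k}$ by Property~1. Differentiating $X+tZ=S(t)R(t)$ at $t=0$ gives $Z=S'(0)+XR'(0)$, so it suffices to prove $XR'(0)\in(\TU)$ is the component that must vanish for the identity to hold — more precisely, since $S'(0)=\gamma'(0)\in\TU$ (as $S(t)\in\Spkn$), and $Z\in\TU$, we get $XR'(0)=Z-S'(0)\in\TU$, and the claim $\gamma'(0)=Z$ is equivalent to $XR'(0)=0$, i.e.\ $R'(0)=0$. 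To establish $R'(0)=0$, conjugate by $P_{2k}$: with $\hat A(t)=(X+tZ)P_{2k}^T$, $\hat S(t)=S(t)P_{2k}^T$, $\hat R(t)=P_{2k}R(t)P_{2k}^T$, we have from $\hat S(t)^TJ_{2n}\hat S(t)=\hat J_{2k}$ the identity $\hat R(t)=\hat J_{2k}^T\hat S(t)^TJ_{2n}\hat A(t)$ (as derived just before Algorithm~\ref{Alg:BSGS}). Differentiating at $t=0$, using $\hat S(0)=XP_{2k}^T$, $\hat S'(0)=S'(0)P_{2k}^T\in\TU P_{2k}^T$, and $\hat A(0)=XP_{2k}^T$, $\hat A'(0)=ZP_{2k}^T$, yields
\[
\hat R'(0)=\hat J_{2k}^T\bigl(\hat S'(0)^TJ_{2n}XP_{2k}^T + P_{2k}X^TJ_{2n}ZP_{2k}^T\bigr).
\]
Now $\hat S'(0)^TJ_{2n}X = P_{2k}S'(0)^TJ_{2n}X$ and $X^TJ_{2n}S'(0)$ is skew-symmetric since $S'(0)\in\TU$ means $S'(0)^TJ_{2n}X+X^TJ_{2n}S'(0)=0$; similarly $X^TJ_{2n}Z$ is skew-symmetric since $Z\in\TU$. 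Thus $\hat J_{2k}^T\hat R'(0)\hat{}$ — bringing everything to the common conjugate $P_{2k}(\cdot)P_{2k}^T$ — equals $\hat J_{2k}^T P_{2k}\bigl(-X^TJ_{2n}S'(0)+X^TJ_{2n}Z\bigr)P_{2k}^T$, and using $Z=S'(0)+XR'(0)$ together with $X^TJ_{2n}X=J_{2k}$ this becomes $\hat J_{2k}^T P_{2k}J_{2k}R'(0)P_{2k}^T=\hat J_{2k}^T\hat J_{2k}\,P_{2k}R'(0)P_{2k}^T=\hat R'(0)$, a tautology. So the skew-symmetry alone is not enough: the decisive extra input is that $\hat R(t)$ lies in the \emph{constrained} set — $\hat R(t)$ is upper triangular with $r_{2j-1,2j}(t)=0$, $r_{2j-1,2j-1}(t)>0$, $|r_{2j,2j}(t)|=r_{2j-1,2j-1}(t)$ — whereas the expression above shows $\hat R'(0)$ is (a conjugate of) a skew-symmetric matrix plus contributions we can pin down; an upper-triangular matrix that is forced by these normalizations to be ``the upper-triangular part of something skew'' and to have the prescribed diagonal structure must have vanishing derivative at $t=0$ because $\hat R(0)=I_{2k}$ is the unique such matrix in a neighborhood consistent with $\hat A(0)^TJ_{2n}\hat A(0)=\hat J_{2k}$. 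Making this last step rigorous is the main obstacle: one should argue that the map $t\mapsto \hat R(t)$ is determined by the Cholesky-like factorization \eqref{eq:CholDec} of $\hat A(t)^TJ_{2n}\hat A(t)$ with the normalization \eqref{eq:T_2k_0}, differentiate that factorization, and read off from the first-order perturbation of $\hat J_{2k}+t(\text{skew})+O(t^2)$ — whose unique normalized Cholesky-like factor is $\hat J_{2k}$ to first order — that $\hat R'(0)=0$. Then $\gamma'(0)=S'(0)=Z$ follows, completing the proof.
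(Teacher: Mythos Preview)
Your approach is \emph{different} from the paper's. The paper does not verify the two retraction axioms by hand; instead it shows that the map $\Theta:\Spkn\times T_{2k}^0(P_{2k})\to\mathcal{U}$, $(S,R)\mapsto SR$, is a diffeomorphism onto an open subset of $\mathbb{R}_*^{2n\times 2k}$ (openness from Theorem~\ref{th:existenceSR}, smoothness of $\Theta^{-1}$ from the rational/square-root nature of Algorithm~\ref{Alg:SGS}), checks the dimension count $\dim\Spkn+\dim T_{2k}^0(P_{2k})=4nk$, notes the neutral element $I_{2k}$, and then invokes \cite[Proposition~4.1.2]{AbsiMS08}, which delivers both retraction axioms at once. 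This buys economy: no curve differentiation is needed, and the argument would transfer verbatim to any other decomposition-based retraction.

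Your direct verification handles well-definedness and Property~1 correctly. The real issue is Property~2: you need $R'(0)=0$, you recognize this, but you do not prove it --- the passage starting with ``a tautology'' and ending with ``Making this last step rigorous is the main obstacle'' is not an argument. The route via $\hat R(t)=\hat J_{2k}^T\hat S(t)^TJ_{2n}\hat A(t)$ mixes the unknown $S'(0)$ back in and leads you in a circle. There is, however, a clean way to close the gap that bypasses $S(t)$ entirely. From $S(t)^TJ_{2n}S(t)=J_{2k}$ one has
\[
R(t)^TJ_{2k}R(t)=(X+tZ)^TJ_{2n}(X+tZ)=J_{2k}+t^2\,Z^TJ_{2n}Z,
\]
where the first-order term vanishes precisely because $Z\in\TU$. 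Differentiating at $t=0$ (with $R(0)=I_{2k}$) gives $R'(0)^TJ_{2k}+J_{2k}R'(0)=0$, i.e.\ $\hat J_{2k}\hat R'(0)$ is symmetric with $\hat R'(0)=P_{2k}R'(0)P_{2k}^T$. Since $\hat R(t)$ stays in the set~\eqref{eq:T_2k_0} and $\hat R(0)=I_{2k}$, the derivative $\hat R'(0)$ is block upper triangular with $2\times2$ diagonal blocks equal to $b_jI_2$ (the constraint $|r_{2j,2j}|=r_{2j-1,2j-1}$, together with $r_{2j,2j}(0)=1>0$, forces equal diagonal entries at first order, and $r_{2j-1,2j}\equiv0$). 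Writing $\hat J_{2k}\hat R'(0)$ blockwise, symmetry forces every strictly upper $2\times2$ block to vanish and $J_2(b_jI_2)$ to be symmetric, i.e.\ $b_j=0$. Hence $R'(0)=0$ and $\gamma'(0)=S'(0)=Z$.
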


\begin{proof}
	We consider the mapping 
	\begin{equation*}
	\arraycolsep=2pt
	\begin{array}{rcl}
	\Theta:\Spkn\times T_{2k}^0(P_{2k})  &\rightarrow & \mathcal{U} 
	\\
	(S,R)&\mapsto & SR,
	\end{array}
	\end{equation*}
	where $\mathcal{U}\subset\mathbb{R}_*^{2n\times 2k}$ consists of  
	matrices having an SR decomposition and $\mathbb{R}_*^{2n\times 2k}$ denotes the set of real $2n\times 2k$ matrices of full rank.   It follows from Theorem~\ref{th:existenceSR} that~$\,\mathcal{U}$ is an~open subset of $\mathbb{R}_*^{2n\times 2k}$. Moreover, $\Theta$ admits a~neutral element $I_{2k}\in T_{2k}^0(P_{2k})$ satisfying
	\[
	\Theta(S,I_{2k})=SI_{2k} = S \quad\mbox{ for all } S \in \Spkn.
	\]
	Since $\mathrm{dim}\bigl(\Spkn\bigr)=4nk-k(2k-1)$ and $\mathrm{dim}\bigl(T_{2k}^0(P_{2k})\bigr)=k(2k-1)$, we obtain that 
	$$
	\mathrm{dim}(\Spkn)+\mathrm{dim}(T_{2k}^0(P_{2k}))=4nk=\mathrm{dim}(\mathbb{R}_*^{2n\times 2k}).
	$$
	The mapping~$\Theta$ is smooth as it is just the matrix multiplication restricted to the submanifolds. Furthermore, the inverse mapping $\Theta^{-1}$ is defined on the whole $\,\mathcal{U}$. 	
	For any $A\in\mathcal{U}$, the first component~$S$ of $\Theta^{-1}(A)$ is obtained by Algorithm~\ref{Alg:SGS} applied to $A$. Since this algorithm consists of basic mathematical operations only,  it is smooth on $\mathcal{U}$. The second component of~$\Theta^{-1}(A)$ is determined as $R = S^+A$, where $S^+ = J_{2k}^TS^TJ_{2n}$ is the symplectic inverse of $S$. This means that $\Theta$ is indeed a~diffeomorphism. Then by \cite[Proposition~4.1.2]{AbsiMS08}, the mapping 
	$\mathcal{R}_X^{\rm SR}$ in \eqref{eq:retraction} is a~retraction on $\Spkn$.	
\end{proof}

For the global convergence of a Riemannian gradient-based optimization algorithm that employs this retraction, its domain is crucial. Given $X\in\Spkn$, using Theorems~\ref{th:existenceSR} and \ref{theo:SRretraction}, we can deduce that the set of the tangent vectors~$Z$, for which the retraction $\mR_X^{\rm SR}(Z)$ does not exist, has measure zero. 
This fact is unfortunately not enough to guarantee the convergence of the associated algorithm as it requires that the retraction is locally well-defined around the origin~$0_X$ in the tangent space, i.e., the domain of the retraction should contain an open ball centered at~$0_X$ in the tangent space $\TU$, see \cite[Theorem 5.7]{GSAS21}. For the SR retraction developed here, the following theorem indicates that the statement holds for the ball of radius~one.

\begin{theorem}\label{theo:SRretraction_existence}
	Let $X \in \Spkn$. If a tangent vector $Z \in \TU$ satisfies $\|Z\|_2<1$, then $X+Z$ has an~SR decomposition.
\end{theorem}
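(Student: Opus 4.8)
The plan is to invoke Theorem~\ref{th:existenceSR}: it suffices to show that all leading minors of even dimension of the skew-symmetric matrix $M:=P_{2k}^{}(X+Z)^TJ_{2n}(X+Z)P_{2k}^T$ are nonzero. Since $X\in\Spkn$ and $Z\in\TU$, the tangent space characterization \eqref{eq:tangsp_Euclidean1} gives $Z^TJ_{2n}X + X^TJ_{2n}Z = 0$, so expanding the quadratic form we get
\[
(X+Z)^TJ_{2n}(X+Z) = X^TJ_{2n}X + X^TJ_{2n}Z + Z^TJ_{2n}X + Z^TJ_{2n}Z = J_{2k} + Z^TJ_{2n}Z.
\]
Conjugating by the orthogonal matrix $P_{2k}$ and using \eqref{eq:P2kJ2k}, this becomes $M = \hat{J}_{2k} + P_{2k}^{}Z^TJ_{2n}ZP_{2k}^T$. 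Thus the task reduces to proving that $\hat{J}_{2k}+E$ has all even-dimensional leading minors nonzero, where $E:=P_{2k}^{}Z^TJ_{2n}ZP_{2k}^T$ is skew-symmetric with $\|E\|_2 = \|Z^TJ_{2n}Z\|_2 \le \|Z\|_2^2 < 1$ (here I use that $J_{2n}$ and $P_{2k}$ are orthogonal, so conjugation/multiplication by them preserves the spectral norm, and submultiplicativity).

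Next I would argue that each even-dimensional leading principal submatrix of $\hat{J}_{2k}+E$ is invertible. For $j=1,\dots,k$, write $(\hat{J}_{2k}+E)_{1:2j,1:2j} = (\hat{J}_{2k})_{1:2j,1:2j} + E_{1:2j,1:2j}$. By the block-diagonal structure in \eqref{eq:P2kJ2k}, $(\hat{J}_{2k})_{1:2j,1:2j} = \diag(J_2,\dots,J_2) = \hat{J}_{2j}$, which is orthogonal (in particular $\|\hat{J}_{2j}^{-1}\|_2 = 1$). The submatrix $E_{1:2j,1:2j}$ is a principal submatrix of $E$, hence $\|E_{1:2j,1:2j}\|_2 \le \|E\|_2 < 1$. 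Therefore $(\hat{J}_{2k}+E)_{1:2j,1:2j} = \hat{J}_{2j}(I_{2j} + \hat{J}_{2j}^{-1}E_{1:2j,1:2j})$, and since $\|\hat{J}_{2j}^{-1}E_{1:2j,1:2j}\|_2 \le \|E_{1:2j,1:2j}\|_2 < 1$, the Neumann series shows $I_{2j}+\hat{J}_{2j}^{-1}E_{1:2j,1:2j}$ is invertible; hence so is the leading submatrix, and its determinant is nonzero.

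With all even-dimensional leading minors of $M = P_{2k}^{}(X+Z)^TJ_{2n}(X+Z)P_{2k}^T$ established nonzero, Theorem~\ref{th:existenceSR} (note $X+Z$ has full column rank, since otherwise $M$ would be singular, contradicting that its $2k\times 2k$ leading minor, namely $\det M$ itself, is nonzero) yields the desired SR decomposition of $X+Z$. The only mildly delicate point is bookkeeping the norm inequalities under the orthogonal conjugations and recognizing that a principal submatrix does not increase the spectral norm; everything else is routine. I do not anticipate a genuine obstacle here — the bound $\|Z\|_2<1$ is exactly calibrated so that the perturbation of $\hat{J}_{2k}$ stays within the radius of invertibility uniformly across all the leading blocks.
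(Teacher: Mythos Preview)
Your proof is correct and follows essentially the same route as the paper: expand $(X+Z)^TJ_{2n}(X+Z)=J_{2k}+Z^TJ_{2n}Z$ via the tangent condition, conjugate by $P_{2k}$ to get $\hat{J}_{2k}+E$ with $\|E\|_2<1$, and use that each even leading block is $\hat{J}_{2j}$ plus a perturbation of spectral norm $<1$, hence invertible. Your explicit remark that $X+Z$ has full column rank (needed to invoke Theorem~\ref{th:existenceSR}) is a nice touch that the paper leaves implicit.
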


\begin{proof}
	We will show that all leading minors of even dimension of the matrix $P_{2k}^{}(X+Z)^TJ_{2n}^{}(X+Z)P_{2k}^T$ are nonzero. Taking \eqref{eq:tangsp_Euclidean1},  the orthogonality of~$P_{2k}$, and  \eqref{eq:P2kJ2k} into account, we obtain
	\begin{align*}
	P_{2k}^{}(X\!+\!Z)^TJ_{2n}^{}(X\!+\!Z)P_{2k}^T & = P_{2k}^{}(X^T\!J_{2n}^{}X\!+\!Z^T\!J_{2n}X\!+\!X^T\!J_{2n}^{}Z\!+\!Z^T\!J_{2n}^{}Z)P_{2k}^T \\
	& = P_{2k}^{}(J_{2k}^{}+Z^T\!J_{2n}^{}Z)P_{2k}^T\\
	& = P_{2k}^{}J_{2k}^{}P_{2k}^T\big(I_{2k}^{}-(P_{2k}^{}J_{2k}^{}P_{2k}^T)(P_{2k}^{}Z^T\!J_{2n}^{}ZP_{2k}^T)\big)\\
	& = \hat{J}_{2k}^{}\big(I_{2k}^{}-\hat{J}_{2k}^{}(P_{2k}^{}Z^T\!J_{2n}^{}ZP_{2k}^T)\big).
	\end{align*}
	Because of the special structure of $\hat{J}_{2k}^{}$, the nonsingularity of 
	\[
	\Big(P_{2k}^{}(X+Z)^TJ_{2n}^{}(X+Z)P_{2k}^T\Big)_{1:2j,1:2j}\! = \!
	\Big(\hat{J}_{2k}^{}\big(I_{2k}^{}- \hat{J}_{2k}^{}(P_{2k}^{}Z^T\!J_{2n}^{}ZP_{2k}^T)\big)\Big)_{1:2j,1:2j}
	\]
	is the same as that of $\big(I_{2k}^{}-\hat{J}_{2k}^{}(P_{2k}^{}Z^T\!J_{2n}^{}ZP_{2k}^T)\big)_{1:2j,1:2j}$ for $j = 1,\ldots,k$. The latter in turn can be shown to be true. Indeed, 
	since $\|J_{2n}\|_2=\|\hat{J}_{2k}\|_2=1$ and $P_{2k}$ is orthogonal, we have
	\begin{align*}
	\Big\|\big(\hat{J}_{2k}^{}(P_{2k}^{}Z^T\!J_{2n}^{}ZP_{2k}^T)\big)_{1:2j,1:2j}\Big\|_2
	&=\Big\|(\hat{J}_{2k}^{})_{1:2j,1:2j}(P_{2k}^{}Z^T\!J_{2n}^{}ZP_{2k}^T)_{1:2j,1:2j}\Big\|_2\\
	& \leq \Big\|(P_{2k}^{}Z^T\!J_{2n}^{}ZP_{2k}^T)_{1:2j,1:2j}\Big\|_2 \\
	& \leq\big \|P_{2k}^{}Z^T\!J_{2n}^{}ZP_{2k}^T\big\|_2 \leq\big \|Z\big\|_2^2<1
	\end{align*}
	for $j=1,\ldots,k$. Thus, in view of Theorem~\ref{th:existenceSR}, the proof is complete.
\end{proof}

\section{Riemannian gradient method with non-monotone line search}
\label{sec:linesearch}

We now present the Riemannian gradient method with non-monotone line search for solving the constrained optimization problem \eqref{eq:min}. Starting with an~initial guess \mbox{$X_0\in\Spkn$}, this method generates a~sequence of iterates $\{X_i\}$ using a~search direction \mbox{$-\grad f(X_i)\in \mathrm{T}_{X_i}\Spkn$} as
\[
X_{i+1} = \mathcal{R}_{X_i}(-\tau_i\,\grad f(X_i)),
\]
where $\mathcal{R}_{X_i}$ is one of the retractions defined in Section~\ref{sec:retraction},  and $\tau_i>0$ is an~appropriate step size. We summarize the resulting Riemannian gradient method combined with the alternating Barzilai--Borwein strategy \cite{BarB88} for the  step size in Algorithm~\ref{alg:non-monotone gradient}. 

\begin{algorithm}[htbp]
	\caption{Riemannian gradient method for the optimization problem~\eqref{eq:min}}
	\label{alg:non-monotone gradient}
	\begin{algorithmic}[1]
		\REQUIRE The cost function $f$, metric $g$ and retraction $\mathcal{R}$ on $\Spkn$, 
		initial guess $X_0\in\Spkn$, $\gamma_0>0$,
		\mbox{$0<\gamma_{\min}<\gamma_{\max}$}, 
		$\beta, \delta\in(0,1)$, $\alpha \in [0,1]$, 
		$q_0=1$, $c_0 = f(X_0)$. 
		\ENSURE Sequence of iterates $\{X_i\}$. 
		\FOR{$i=0,1,2,\dots$}
		\STATE Compute $Z_i = -\grad f(X_i)$. 
		\IF{$i>0$} 
		\STATE{ 
			$
			\gamma_i=\left\{\begin{array}{ll}
			\dfrac{\|W_{i-1}\|_F^2}{\abs{\tr(W_{i-1}^T Y_{i-1}^{})}} 
			&\text{for odd } i, \\[3mm]
			\dfrac{\abs{\tr(W_{i-1}^T Y_{i-1}^{})}}{\|Y_{i-1}\|_F^2}
			&\text{for even } i,
			\end{array}\right.
			$\\
			where $W_{i-1} = X_i - X_{i-1}$ and $Y_{i-1} =Z_i-Z_{i-1}$.
		}
		\ENDIF
		\STATE Calculate the trial step size $\gamma_i=\max\bigl(\gamma_{\min},\min(\gamma_i,\gamma_{\max})\bigr)$.			
		\STATE Find the smallest integer $\ell$  such that the non-monotone condition 
		\[
		f\big(\mathcal{R}_{X_i}(\tau_i Z_i)\big) \le c_m + \beta\, \tau_i\, g\big(\grad f(X_i), Z_i\big)
		\]
		holds, where $\tau_i=\gamma_i\, \delta^{\ell}$. 
		\STATE Set $X_{i+1} = \mathcal{R}_{X_i}(\tau_i Z_i)$.
		\STATE Update $q_{i+1} = \alpha q_{i} + 1$ and
		$\displaystyle{c_{i+1} = \frac{\alpha q_{i}}{q_{i+1}} c_{i}  + \frac{1}{q_{i+1}} f(X_{i+1})}$.
		\ENDFOR
	\end{algorithmic}
\end{algorithm}

It has been shown in \cite[Theorem~5.7]{GSAS21} that every accumulation point~$X_*$ of the sequence~$\{X_i\}$ generated by Algorithm~\ref{alg:non-monotone gradient} is a~critical point of the cost function~$f$ in~\eqref{eq:min}, i.e., $\grad f(X_*)=0$, no matter which metric~$g$ and which retraction $\mathcal{R}$ are employed provided that 
$0_{X_*}\in\mathrm{T}_{X_*}\Spkn$ is in the interior of the domain of~$\mathcal{R}$.  
Note that due to Theorem~\ref{theo:SRretraction_existence}, the new  retraction~\eqref{eq:retraction} satisfies this condition.

\section{Applications}
\label{sec:appl}

Algorithm~\ref{alg:non-monotone gradient} is presented without any specification of metric nor retraction. As discussed in Sections~\ref{sec:Stiefel} and \ref{sec:retraction}, we can take either the canonical-like~(C) 
or Euclidean~(E) metric and one of the retractions based on Cayley transform (Cayley), quasi-geodesics (QGeo)
and SR decomposition (SR). 
It is important to note that they are independently constructed and can freely be combined.
As a result, we obtain totally six optimization schemes which are referred to as
CayleyC, CayleyE, QGeoC, QGeoE, SRC, and SRE. In this section, we present different applications for optimization on the symplectic Stiefel manifold $\Spkn$ and compare the numerical performance of different optimization schemes applied to various problems and scenarios. Three quantities of most interest are the values of the cost function, the norms of the Riemannian gradient of this function, and the feasibility violation of the approximate optimal solution. Depending on the problem, other performance and error measures will also be investigated. The numerical experiments are performed on a~standard desktop with Intel(R) Core(TM) i9-11900K (at 3.50GHz, 16MB Cache, 32GB RAM) running MATLAB R2021b under Ubuntu 22.04. The code including different optimization schemes is available from \href{https://github.com/opt-gaobin/spopt}{https://github.com/opt-gaobin/spopt}. 

Various parameters in Algorithm~\ref{alg:non-monotone gradient} need to be set. Most of them are fixed for all problems except for the maximal number of iterations $\texttt{niter}$ and the tolerance $\texttt{gtol}$ in the stopping criterion $\|\grad f(X_i)\|_F\leq\texttt{gtol}$.
For the backtracking search, we set $\beta = 1\e{-4}$, $\delta = 1\e{-1}$,    $\gamma_0 = 1\e{-3}$ (if it is not otherwise specified), $\gamma_{\min} = 1\e{-15}$, $\gamma_{\max} =1$ for the symplectic eigenvalue computation problem  and $\gamma_{\max} = 1\e\!+\!5$ for the rest. For the non-monotone condition, we choose $\alpha = 0.85$. Further, we use the canonical-like metric~$g_{c,\rho}$ with $\rho=1/2$ as recommended in \cite[Subsection 6.2]{GSAS21}. Note also that Theorem~\ref{theo:SRretraction_existence} provides a sufficient condition on
the size of $\|\tau_iZ_i\|_2$ in step 8 of Algorithm~\ref{alg:non-monotone gradient} 
in the case of using the SR~retraction. In our experiments, we never experienced a problem with the SR~decomposition even without any adjustment of~$\tau_i$  for this existence purpose.

\subsection{Symplectic target matrix problem}\label{ssec:Example_sympl_target}
In this problem, one has to solve the following minimization problem
\begin{equation} \label{eq:nearest_prob}
\min_{X\in \Spn}f(X) := \|X-W\|^2_F,
\end{equation}
where $W \in \Spn$ is given. 
In the \emph{optimal control of (symplectic) quantum gate}, it is shown that one can realize a target/ideal quantum gate with symplectic transformations by minimizing the distance between the real and ideal gates which in turn can be simplified to solving~\eqref{eq:nearest_prob}, where $W$ is a~matrix representation of the gate \cite{WuCR08,WuCR10}. A more general problem than \eqref{eq:nearest_prob}, where the cost function is given in the form of a~uniformly weighted sum of distances to several symplectic targets, has been investigated in \cite{Fior16}. An~extension
of this problem to the symplectic Stiefel manifold $\Spkn$ and a~general target matrix $W\in\R^{2n\times 2k}$, termed as the \emph{nearest symplectic matrix problem}, has been considered as a~test problem in~\cite{GSAS21}. Here, we examine two test examples. In the first one, the SUM gate
is used as the symplectic target gate, which is given by 
\begin{equation*}
W = \left[\begin{array}{c c c r}
1&0&0&0\\
1 &1& 0&0\\
0&0 &1&-1\\
0&0&0&1
\end{array}\right],
\end{equation*}
see \cite{WuCR10} and references therein. We run two experiments with different initial guesses and the same maximal number of iterations $\texttt{niter} = 500$ and tolerance $\texttt{gtol} = 1\e{-12}$.
First, we choose $X_0 = I_4$ as the starting point. Figure~\ref{fig:near_quangate_compare} demonstrates that all six~optimization schemes converge to the global minimizer $W$.  Notably, the ones with the Euclidean metric get to the target after only a~few steps while those with the canonical-like metric need more iterations. Moreover, the symplecticity of their iterates is truly attained which makes the corresponding plots disappear in the right subfigure in Figure~\ref{fig:near_quangate_compare}.  

For another initial guess $X_0 = \diag(1.728, -1.2, 1/1.728, -1/1.2)$, the convergence results and the feasibility violation are presented in Figure~\ref{fig:near_quangate2_compare}. One can see that all optimization schemes converge not to the minimizer but to a~critical point which, in view of \cite[Theorem 4.2]{WuCR10}, is a~saddle point.
The effect of different metrics on the numerical behavior, though not considerably as the first run, but still can be seen: the ones with the Euclidean metric apparently converge faster.

In the second example, we employ artificial data in order to check different scenarios and problem sizes. To this end, we choose the symplectic target matrix $W = \left[\begin{smallmatrix}
I&0\\V&I
\end{smallmatrix}\right]$ and the initial iterate $X_0 =\left[\begin{smallmatrix}
I&Y\\0&I
\end{smallmatrix}\right]$ with randomly generated matrices \mbox{$V, Y \in \symset(200)$}, see \cite[Lemma 2.1]{DopiJ09}. We run Algorithm~\ref{alg:non-monotone gradient} for $\texttt{niter} =1000$ iterations with the tolerance $\texttt{gtol} = 1\e{-10}$. Figure~\ref{fig:near_artificialdata_compare} shows that the difference between the optimization schemes based on the canonical-like and Euclidean metric is even severer: the ones with the canonical-like  metric do not provide reasonable results after $1000$ iterations, while the others reach the tolerance after less than  $50$~iterations. Comparing the convergence results for the latter in the lower row  of Figure~\ref{fig:near_artificialdata_compare}, we observe that the SRE scheme has better performance than CayleyE and QGeoE. 
Replacing the initial guess~$X_0$ with another symplectic matrices listed in  
\cite[Lemma~2.1]{DopiJ09}, we obtain the similar results.

These experiments apparently tell us that if the cost function expresses the distance resulting from the Frobenius norm, which can also be thought of as the Euclidean norm in the corresponding matrix space, the Riemannian optimization schemes with the Euclidean metric are preferred. Moreover, even in the convergent case, the computed critical point can be a~saddle point which is far from being satisfactory.

\begin{figure}[htbp]
	\centering
	\includegraphics[width=0.9\textwidth]{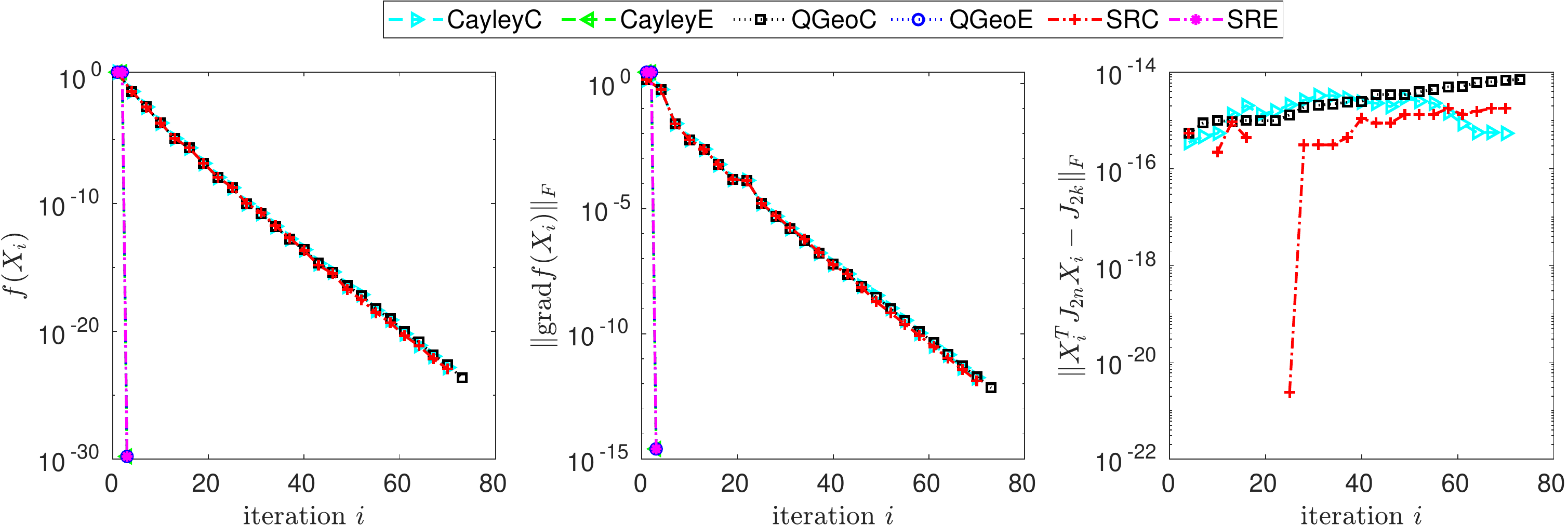}
	{\caption{Optimal control of the SUM gate with convergence to the global minimizer: a~comparison of the cost function values (left), Riemannian gradient norms (middle), and the feasibility violation (right) for different optimization schemes. 
		}
		\label{fig:near_quangate_compare}}
\end{figure}

\begin{figure}[htbp]%
	\centering
	\includegraphics[width=0.9\textwidth]{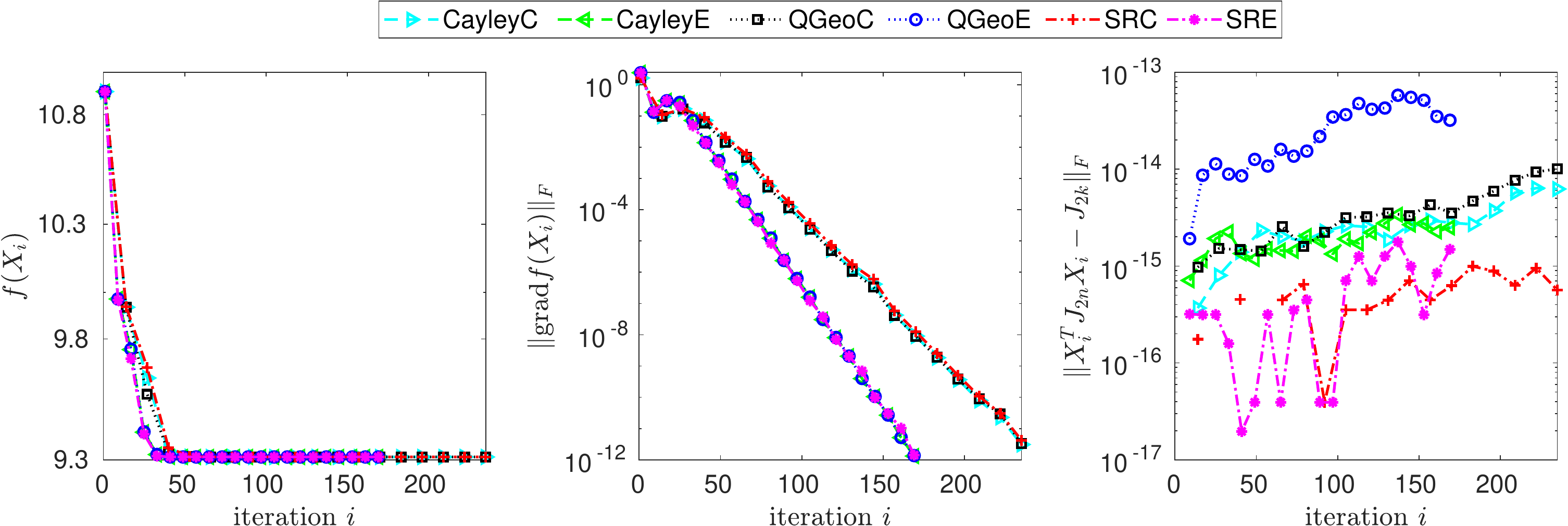}
	{\caption{Optimal control of the SUM gate with convergence to a saddle point: a comparison of the cost function values (left), Riemannian gradient norms (middle), and the feasibility violation (right) for different optimization schemes.
		}
		\label{fig:near_quangate2_compare}}
\end{figure}

\begin{figure}[htbp]%
	\centering
	\includegraphics[width=0.9\textwidth]{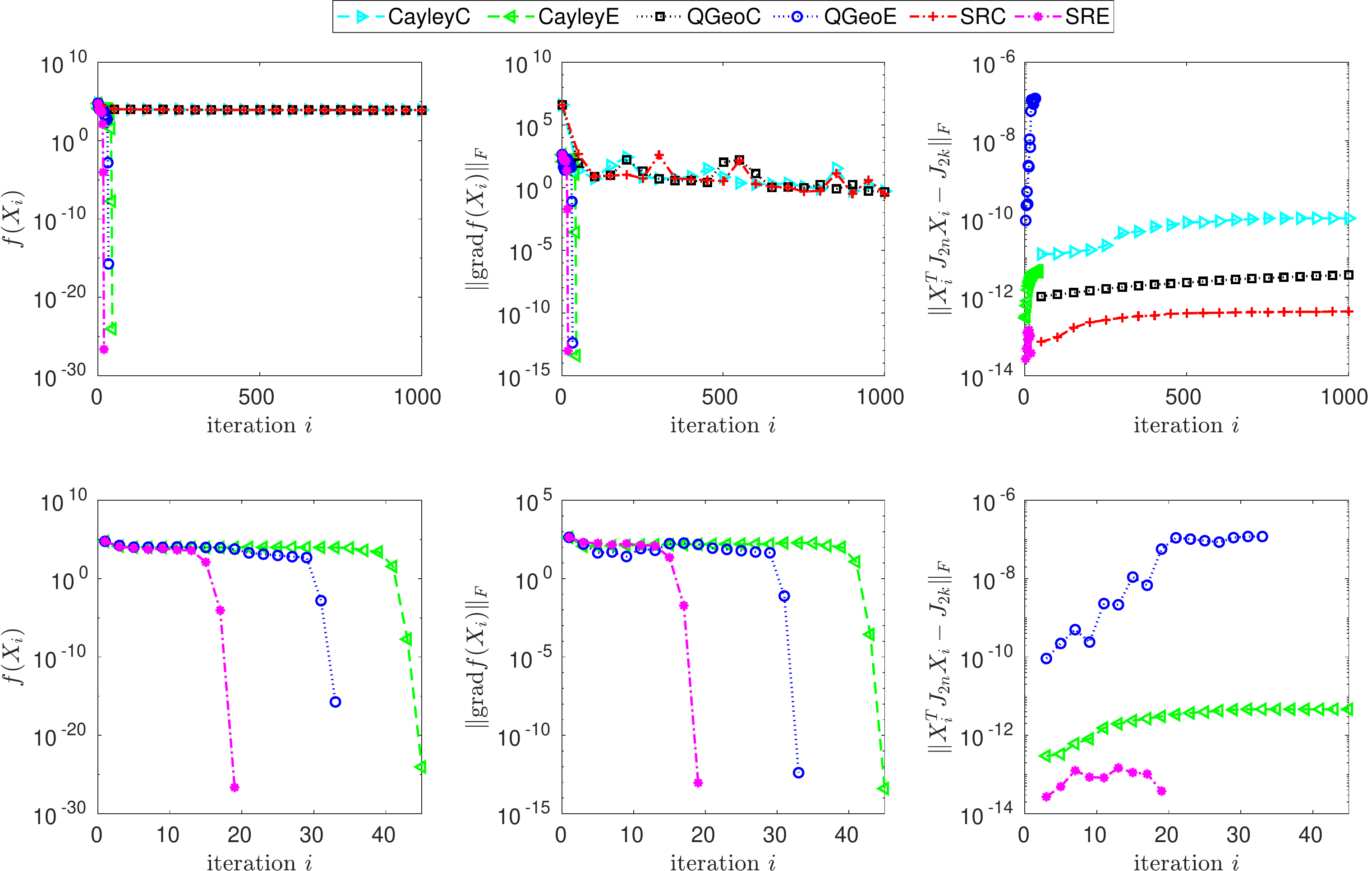}
	\caption{A symplectic target matrix problem: a comparison of the cost function values (left), Riemannian gradient norms (middle), and the feasibility violation (right) for six different optimization schemes (upper row) and three schemes with the Euclidean metric (lower row).
	}
	\label{fig:near_artificialdata_compare}
\end{figure}

\subsection{Computing symplectic eigenvalues}
\label{ssec:symplEV}

Originated from Williamson's work~\cite{Will36}, for a $2n\times 2n$ symmetric positive-definite (SPD) real matrix $A$, there exists a matrix $S\in \Spn$ such that
\begin{equation}\label{eq:sympleigdecomp}
S^TAS = \begin{bmatrix}
D & 0 \\ 0 & D
\end{bmatrix},
\end{equation}
where $D=\diag(d_1,\ldots,d_n)$ is a diagonal matrix with positive diagonal entries. The right-hand side of~\eqref{eq:sympleigdecomp} is termed as \emph{Williamson's diagonal form} of~$A$, and the positive numbers $d_1,\ldots,d_n$ are referred to as \emph{symplectic eigenvalues} of $A$. For ease of later argument, the symplectic eigenvalues are always numbered in the nondecreasing order, i.e., $d_1 \leq \cdots \leq d_n$.
Note that the symplectic eigenvalues of $A$ differ from the standard eigenvalues of $A$ but strongly relate to the eigenvalues of the Hamiltonian matrix $J_{2n}A$  or the symmetric/skew-symmetric matrix pencil $A-\lambda J_{2n}$, 
and the Hermitian matrix $\im A^{1/2}J_{2n}A^{1/2}$ with $\im =\sqrt{-1}$ and $A^{1/2}$ being the symmetric square root of~$A$ or the Hermitian pencil $A-\lambda\im J_{2n}$, which have been intensively investigated in \cite{Amod06,LancR06,BhatJ15,JainM22}, to name a few. 
A~\emph{pair of symplectic eigenvectors} $u, v \in \Rn\backslash\{0\}$ associated with a~symplectic eigenvalue~$d$ of $A$ are those that satisfy
\begin{equation*}
Au = d\, J_{2n}v,\qquad Av = -d\, J_{2n}u.
\end{equation*}
Symplectic eigenvalues and eigenvectors can be numerically computed  
using a~symplectic Lanczos method via the connection with  so-called positive-definite Hamiltonian matrices \cite{Amod06} or by solving a trace minimization problem  
\begin{equation}\label{eq:tracetheorem}
\min_{X\in \Spkn}f(X):= \tr(X^TAX) 
\end{equation}
using a Riemannian optimization method \cite{SonAGS21}. It has been shown in \cite{Hiro06,BhatJ15} that
the minimal value of $f$ in~\eqref{eq:tracetheorem} is twice the sum of $k$ smallest symplectic eigenvalues of $A$.
It is worth to note that this result can also be derived based on the trace minimization theorems, called the Ky-Fan theorem, for standard eigenvalues of real symmetric positive-semidefinite pencils \cite{KovaV95,Nakic03} or its extended complex Hermitian version \cite{LianLB13}. Recently, in \cite{SonSt22, LianWZL23}, the notion of symplectic eigenvalues and its trace minimization theorem have been extended to a~special class of symmetric positive-semidefinite (SPSD) matrices which have symplectic null space. Symplectic eigenvalues find applications in quantum mechanics, optics, stability analysis of gyroscopic systems, and in quantization process of superconducting networks modeled by Hamiltonian systems, see \cite{Hiro06,BennFS08,Lanc13,BuiFK14,KrbeTV14,EgusP22}. 

In the optimization approach, to compute the  $k$ smallest symplectic eigenvalues with $k$ pairs of symplectic eigenvectors for $1 \leq k \leq n$,  one first finds a~minimizer \mbox{$X_*\in\Spkn$} of the cost function $f$ in \eqref{eq:tracetheorem} and then 
diagonalizes $X_*^TAX_*^{}$ by an~orthosymplectic matrix $K\in\mathbb{R}^{2k\times 2k}$ such that $K^TX_*^TAX_*^{}K = \diag(\Lambda,\Lambda)$ with $\Lambda=\diag(\lambda_1,\ldots,\lambda_k)$. It has been shown in \cite{SonAGS21} 
that $\lambda_1,\ldots,\lambda_k$ and the pair of the $j$-th and $(j+k)$-th columns of~$X_*K$, $j = 1,\ldots,k$, are the sought symplectic eigenvalues and associated symplectic eigenvector pairs of~$A$, respectively. 

In the first example, in view of \cite{SonSt22}, we construct an~SPSD matrix 
\[
A = J_{2n}Q\begin{bmatrix}
D & 0 \\ 0 & D
\end{bmatrix}(J_{2n}Q)^T
\] 
with a symplectic null space by setting 
$$
D = \diag(0,\ldots,0,m+1,m+2,\ldots,n-1,n)
$$ 
with $0 < m < k < n$ and $Q = KL(n/5, 1.2,-\sqrt{n/5})$, where $L(l,c,d)$ is the (non-orthogonal) symplectic Gauss transformation 
\[
L(l,c,d) = \begin{bmatrix}
I_{l-2}& & & & & & &\\
& c & & & & & d &\\
& & c & & & d & & \\
& & & I_{n-l} & & & & \\
& & & & I_{l-2} & & & \\
& & & & & c^{-1}& & \\
& & & & & & c^{-1}& \\
& & & & & & & I_{n-l}\\
\end{bmatrix}
\]
and $K\! =\! \left[\!\begin{smallmatrix}\enskip\;\real(U)& \imag(U)\\-\imag(U)&\real(U)
\end{smallmatrix}\!\right]\ts\!{\in\Spn}$ with a~randomly generated unitary matrix 
\mbox{$U\!\in\! \mathbb{C}^{n\times n}$}.
Obviously,  $A$ is of rank $2(n-m)$ and its null space has dimension~$2m$. In our experiments, we take 
$n = 1000$, $k = 5$, and $m = 2$. In this case, the $5$~smallest symplectic eigenvalues of $A$ are $0, 0, 3, 4, 5$. The accuracy of the computed symplectic eigenvalues~$\tilde{d}_j$, $j=1,\ldots,k$, is verified by 
the \mbox{$l_1$-norm} error defined as $\sum_{j=1}^{k}|d_j-\tilde{d}_j|$. For all six optimization schemes, we run Algorithm~\ref{alg:non-monotone gradient} with $\texttt{niter}=5000$  iterations and the tolerance $\texttt{gtol} = 1\e-12$. 
In Table~\ref{tab:EVP_symplEV}, we present the computed symplectic eigenvalues, the $l_1$-norm errors, and the errors in the minimal value of the cost function, i.e., ${\rm err}_f=|f(X_{5000})-24|$. One can see that the optimization schemes reach the same accuracy as the MATLAB function \texttt{eigs} applied to the Hermitian pencil $A - \lambda \im J_{2n}$ with the default tolerance $1\e{-14}$.  Certainly, as less direct, these schemes are more computationally expensive. Nevertheless, an obviously seen advantage over \texttt{eigs} is that they provide purely real symplectic eigenvalues. In Figure~\ref{fig:eig_spsd_compare}, we also present  the history of the iterations consisting of the errors in the minimal value of the cost function, the norms of its Riemannian gradient, and the feasibility violation.

\begin{figure}[htbp]%
	\centering
	\includegraphics[width=0.9\textwidth]{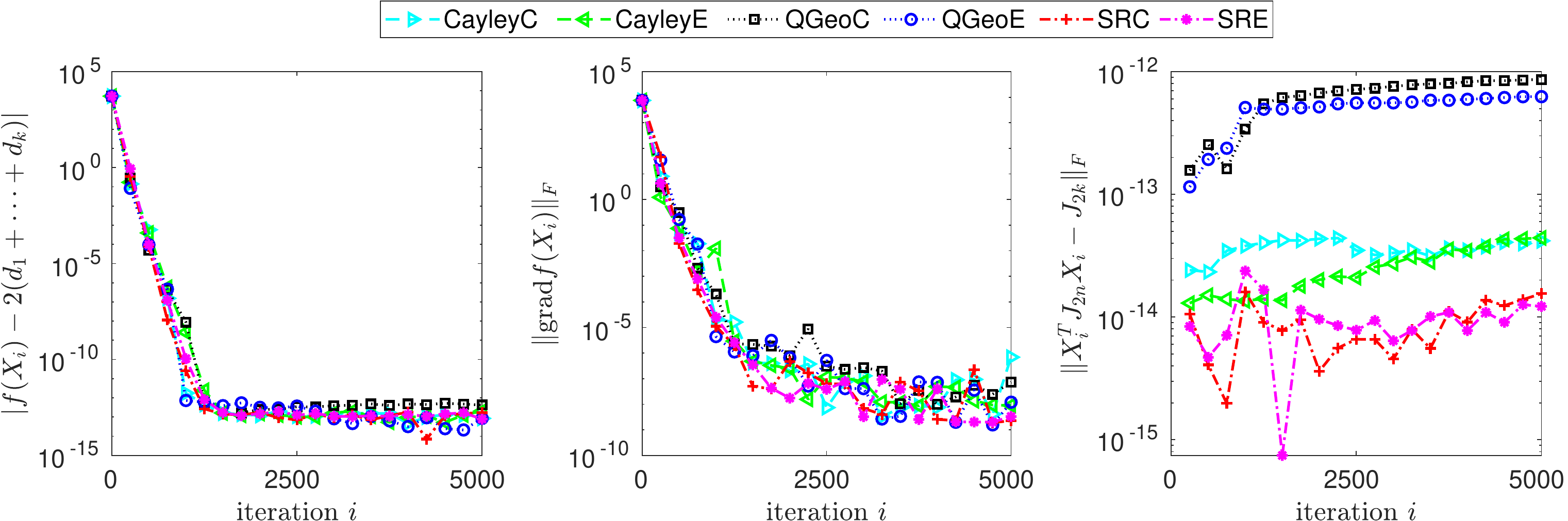}
	\caption{Symplectic eigenvalue computation for an SPSD matrix: a comparison of the cost function values (left), Riemannian gradient norms (middle), and the feasibility violation (right) for  different optimization schemes.}
	\label{fig:eig_spsd_compare}
\end{figure}

Regarding efficiency comparison, in Table~\ref{tab:EVP_symplEV_time}, we report the average time consumed in one iteration step by different optimization schemes with the aforementioned setting and with $k=n=1000$. One can see that  the schemes using the Cayley retraction are fastest in most cases. Moreover, compared to the schemes with the canonical-like metric,
those based on the Euclidean metric tend to be slightly faster  when $k\ll n$ but are obviously slower when $k$ approaches $n$.
The main reason is the difference in the orthogonal projections for computing the Riemannian gradients, c.f., Table~\ref{tab:notion_summary}: while for the canonical-like metric, one has to multiply matrices of size scalable with $2n$, 
a~Lyapunov matrix equation with coefficients of size $2k\times 2k$ is required to solve for the Euclidean metric. Therefore, for $k\ll n$,  solving such an~equation can be much faster while when $k=n$, the cost is definitely more expensive than matrix-matrix multiplication.

\begin{table}[tbp]
	\centering
	\footnotesize
	\caption{Symplectic eigenvalue computation for an SPSD matrix: 5 computed smallest symplectic eigenvalues, $l_1$-errors, and the errors in 
		the  minimal value of the cost function.
	} 
	\label{tab:EVP_symplEV}
	\begin{tabular}{crrrrrr}
		\toprule
		\multirow{2}{*}{~scheme~} & \multicolumn{3}{c}{Canonical-like (C)} & \multicolumn{3}{c}{Euclidean (E)}\\\cmidrule(r){2-4}\cmidrule(r){5-7}
		& {symplectic eigenvalues}  &   {$\;l_1$-error}\;  & $\quad{\rm err}_f\quad$ & {symplectic eigenvalues}  &   {$\;l_1$-error}\;  & $\quad{\rm err}_f\quad$   \\\midrule
		& 0.000000000000000 & & & 0.000000000000000 & & \\
		& 0.000000000000082& & & 0.000000000000049& & \\
		Cayley & 2.999999999999981& $1.42\e{-13}$ &$8.53\e{-14}$ & 3.000000000000000&$1.14\e{-13}$ &$2.06\e{-13}$\\
		& 3.999999999999988& & & 3.999999999999991&  &\\
		& 5.000000000000029& & & 5.000000000000055& &\\\cmidrule(r){2-7}
		
		&0.000000000000000 & & &0.000000000000000 & & \\
		& 0.000000000000057& & & 0.000000000000063& & \\
		{QGeo} & 2.999999999999885&$3.71\e{-13}$  & $4.23\e{-13}$ & 2.999999999999929& $1.86\e{-13}$ & $8.52\e{-14}$\\
		& 3.999999999999878& && 3.999999999999998& &\\
		& 4.999999999999924& && 5.000000000000051& &  \\\cmidrule(r){2-7}
		
		& 0.000000000000000& & & 0.000000000000000& & \\
		& 0.000000000000066& & &0.000000000000056 & & \\
		SR & 2.999999999999979& $1.06\e{-13}$ & $1.67\e{-13}$ & 2.999999999999991&$1.09\e{-13}$& $7.83\e{-14}$\\
		& 3.999999999999993& && 3.999999999999986& &\\
		& 5.000000000000012& & & 5.000000000000031& & \\\midrule[.4pt]
		
		& $\qquad(18 + 60\,\im)\e{-15}\;\,$ & & &&&\\
		& $\qquad(16 + 62\,\im)\e{-15}\;\,$& & &&&\\
		\texttt{eigs}& $3 + ((15 -\enskip 3\,\im)\e{-15})$& $2.19\e{-13}$& &&&\\
		&  $4 -((14 -11\,\im)\e{-15})$ & & &&&\\
		& $5 +((59 -\enskip 8\,\im)\e{-15})$& & &&&\\
		\bottomrule
	\end{tabular}
\end{table}

\begin{table}[tbp]
	\centering
	\footnotesize
	\caption{Symplectic eigenvalue computation for an SPSD matrix: time consumed in seconds per step by different optimization schemes.}
	\label{tab:EVP_symplEV_time}
	\begin{tabular}{c l r c c c}
		\toprule
		& & & Cayley & QGeo & SR\\ \midrule
		\multirow{4}{*}{~$n=1000$~} & \multirow{2}{*}{~$k= 5$ }&{Canonical-like (C)} & 8.50$\e{-3}$&9.38$\e{-3}$ & 9.21$\e{-3}$\\
		&& Euclidean (E) &8.75$\e{-3}$ &7.90$\e{-3}$ &7.72$\e{-3}$ \\\cmidrule(r){2-6}
		&  \multirow{2}{*}{~$k= 1000$~~} &{Canonical-like (C)}  & 2.31$\e{+0}$& 8.01$\e{+0}$ &9.88$\e{+0}$ \\
		& & Euclidean (E) &2.63$\e{+1}$ &3.64$\e{+1}$ &3.18$\e{+1}$ \\ 
		\bottomrule
	\end{tabular}
\end{table}

Regarding efficiency comparison, in Table~\ref{tab:EVP_symplEV_time}, we report the average time consumed in one iteration step by different optimization schemes with the aforementioned setting and with $k=n=1000$. One can see that  the schemes using the Cayley retraction are fastest in most cases. Moreover, compared to the schemes with the canonical-like metric,
those based on the Euclidean metric tend to be slightly faster  when $k\ll n$ but are obviously slower when $k$ approaches $n$.
The main reason is the difference in the orthogonal projections for computing the Riemannian gradients, c.f., Table~\ref{tab:notion_summary}: while for the canonical-like metric, one has to multiply matrices of size scalable with $2n$, 
a~Lyapunov matrix equation with coefficients of size $2k\times 2k$ is required to solve for the Euclidean metric. Therefore, for $k\ll n$,  solving such an~equation can be much faster while when $k=n$, the cost is definitely more expensive than matrix-matrix multiplication.

In the second example, we consider a mechanical system which is used in analysis of vibration and frequency response of wire saws \cite{WeiK00}. 
For such a~system, the SPD matrix is given by $A = J_{2n}H$ with the Hamiltonian matrix
\begin{equation*}
H = \begin{bmatrix}-\frac{1}{2}GM^{-1}&\frac{1}{4}GM^{-1}G - K\\ M^{-1}& -\frac{1}{2}M^{-1}G
\end{bmatrix},
\end{equation*}
where $M, G, K$ are the mass, damping, and stiffness matrices of the underlying mechanical system. Following the setting in \cite{SonAGS21}, we are lead to the minimization problem~\eqref{eq:tracetheorem} with $n= 2000$ and $k = 5$. As we do not know the exact symplectic eigenvalues,
in Figure~\ref{fig:eig_wiresaw_compare}, we only present the history comparison of the six optimization schemes when  Algorithm~\ref{alg:non-monotone gradient} is run with $\texttt{niter}=100000$ iterations and the tolerance $\texttt{gtol} = 1\e{-6}$.
We see that the cost function seems to stagnate even though  the Riemannian gradients are far from being zero.

\begin{figure}[htbp]	
	\centering
	\includegraphics[width=0.9\textwidth]{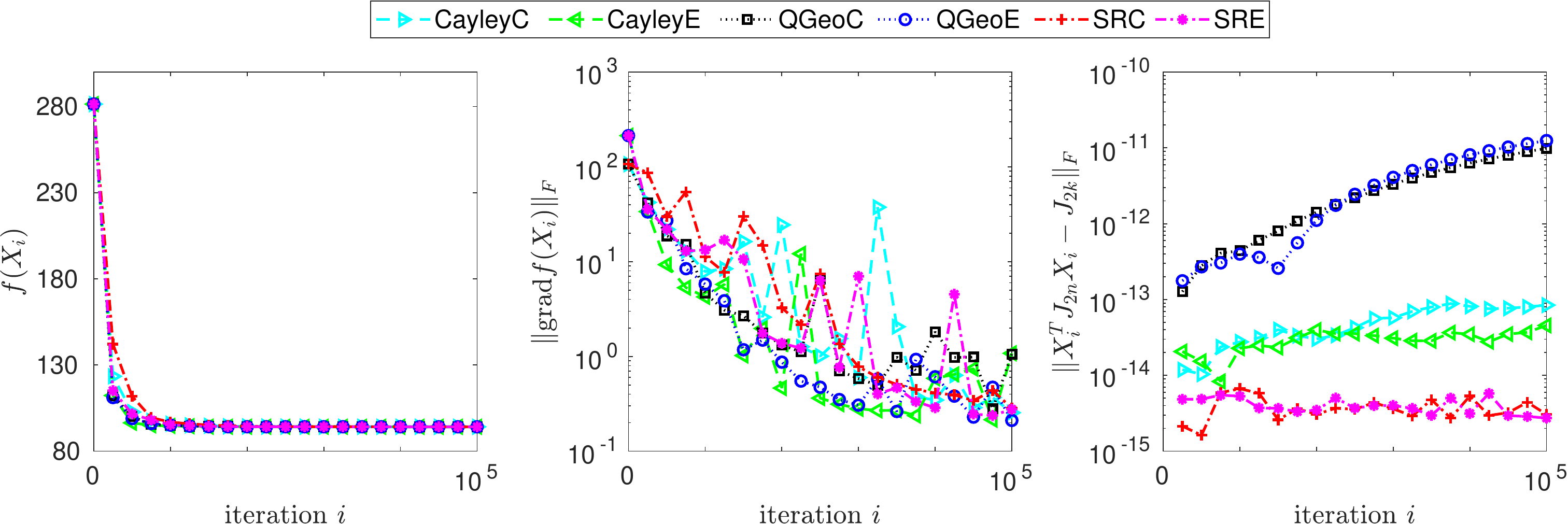}
	\caption{Symplectic eigenvalue computation for a wire saw model: a comparison of the cost function values (left), Riemannian gradient norms (middle), and the feasibility violation (right) for  different optimization schemes.}
	\label{fig:eig_wiresaw_compare}
	\end{figure}

Though with examples in this subsection, there is no considerable difference in the final values of the cost function, we can easily observe that the feasibility violation varies quite a bit for different schemes. As expected, those using the SR~retraction
apparently maintain the symplecticity constraint the best, while the schemes based on the quasi-geodesic retraction perform the worst. This is due to the fact that 
the symplecticity is retrieved by the SR~decomposition at every iteration.
This feature prevents the accumulative errors in the constraint, which can be seen in the quasi-geodesic case. This argument is actually independent of the cost function and therefore holds true for any problem. Moreover, these tests also reconfirm an observation on the constraint violation for the optimization schemes based on the Cayley and quasi-geodesic retractions made in \cite{GSAS21a}. 

\subsection{Symplectic model reduction of Hamiltonian systems}
\label{sec:smor}

The third minimization problem arises in structure-preserving model reduction of nonlinear Hamiltonian systems given by
\begin{equation}\label{eq:HS}
\dot{x} =  J_{2n}\nabla_x H(x),  \qquad x(0)=x_0, 
\end{equation} 
where 
$x\in\mathbb{R}^{2n}$ is the state vector, 
$x_0\in\mathbb{R}^{2n}$ is the initial vector,
$H:\mathbb{R}^{2n}\to\R$ is the continuously differentiable Hamiltonian function describing the internal energy of the system, $\nabla_x H(x)$ is the Euclidean gradient of~$H$ with respect to $x$, and $J_{2n}$ is the structure matrix which describes the interconnection of energy storage elements. For such systems, the Hamiltonian $H$ is a~first integral since it remains invariant along the solution of \eqref{eq:HS}. Hamiltonian systems arise in a~wide range of applications including mechanical systems, molecular dynamics, network design and electromagnetic field simulation, e.g., \cite{MarsR99,DuiMSB09,VanDSJ14}. 

The goal of model reduction is to approximate the full-order model (FOM) \eqref{eq:HS} by a~reduced-order model (ROM) which preserves the Hamiltonian structure
\begin{equation}\label{eq:redHS}
\dot{\tilde{x}}  = J_{2k}\nabla_{\tilde{x}} \tilde{H}(\tilde{x}),  \qquad \tilde{x}(0)=\tilde{x}_0,
\end{equation} 
where $\tilde{x}\in\mathbb{R}^{2k}$ is the reduced state, and $\tilde{H}:\mathbb{R}^{2k}\to\R$ is the reduced Hamiltonian with $k\ll n$. Note that 
the preservation of the Hamiltonian structure during the reduction process ensures the conservation of energy of the reduced-order model. 
In the last decade, structure- and energy-preserving model reduction of Hamiltonian systems has attracted a~lot of attention and several model reduction methods have been developed for such systems
\cite{PengM16,AfkhH17,AfkhBBH18,BuchBH19,Pag21,ShaWK21,HestPR22}.

One of the most popular model reduction approaches for nonlinear Hamiltonian systems is  the PSD method presented first in \cite{PengM16}. It is based on collecting the snapshots  $A=[x(t_1),\ldots, x(t_s)]\in\R^{2n\times s}$
of  the Hamiltonian system~\eqref{eq:HS} and determining a~projection matrix $U\in\Spkn$  which solves the following constrained  minimization problem
\begin{equation}\label{eq:min_mor}
\min_{X\in \Spkn} f(X):=\|A-XX^+A\|_F^2,
\end{equation}
where  $X^+=J_{2k}^T\,X^TJ_{2n}^{}$ is a~symplectic inverse of $X$. Although the existence of a minimizer of \eqref{eq:min_mor} is still unknown, the cost function is smooth, bounded from below, and with a monotonically decreasing line search strategy and an~experimental convergence recognized by, e.g., the distance between two consecutive iterates, one might hope that the value of the cost function at the computed point is close to the infimum.
The solution of  \eqref{eq:HS} is then approximated by $x\approx U\tilde{x}$, where $\tilde{x}\in\mathbb{R}^{2k}$ is a coordinate vector of the approximation with respect to the basis $U$. Replacing $x$ with $U\tilde{x}$ and multiplying the resulting equation from the left with $U^+$, we obtain the reduced-order system~\eqref{eq:redHS} with the reduced initial vector $\tilde{x}_0=U^+x_0$ and the reduced Hamiltonian $\tilde{H}(\tilde{x})=H(U\tilde{x})$.
It has been shown in \cite{PengM16} that the error in the Hamiltonian given by 
$\Delta H(t)=H(x(t))-\tilde{H}(\tilde{x}(t))$ 
is constant for all $t\in\mathbb{R}$. Moreover, if $x_0\in\range(U)$, then $\Delta H(t)\equiv0$. 
This implies that the reduced-order Hamiltonian system  \eqref{eq:redHS} preserves the energy. Another important property of the symplectic projection is the preservation of the stability of equilibrium points, see \cite{PengM16,AfkhH17} for details.

Due to the non-convexity and unboundedness of the feasible set $\Spkn$,  the minimization problem~\eqref{eq:min_mor}  was considered to be difficult to solve.  By imposing additional orthogonality constraint $X^TX=I$, 
different algorithms have been developed in \cite{PengM16} for computing suboptimal solutions. A~similar constraint is also used in   \cite{AfkhH17} for a~greedy algorithm applied to parametric Hamiltonian systems.  Another approach for generating a~non-orthonormal symplectic basis matrix has been presented in \cite{BuchBH19} which is based on the SVD-like decomposition \cite{Xu03} of the snapshot matrix. Unlike the standard SVD, where the optimality of the approximation is well known, the theoretical results derived in \cite{BuchBH19} does not infer a similar claim. 

Here, we employ the Riemannian optimization algorithm for  structure-preserving model reduction of Hamiltonian systems. The key potential advantage of this approach is the possibility to reach an~optimal solution instead of a suboptimal one. 
Note that since the cost function $f$ in \eqref{eq:min_mor} satisfies the homogeneity property 
\mbox{$f(XS)\!=\!f(X)$} for any $S\in\mathrm{Sp}(2k)$, one could also compute a symplectic projection matrix by solving the minimization problem on the symplectic Grassmann manifold \cite{BendZ21,BendZ22}. A~comparison of these two Riemannian optimization approaches in the context of PSD is out of the scope of this paper.

To compute symplectic reduced bases using Algorithm~\ref{alg:non-monotone gradient},
initialization is required. To reduce the risk of convergence to a saddle point as illustrated in  Subsection~\ref{ssec:Example_sympl_target}, we take the symplectic matrix $X_0$
produced by the cotangent lift method~\cite{PengM16}. It is a block diagonal orthosymplectic matrix $X_0=\diag(\hat{X},\hat{X})$, where the columns of $\hat{X}\in\mathbb{R}^{n\times k}$ are the left singular vectors of the combined snapshot matrix $\left[[I_n,\, 0]A, \;  [0,\, I_n]A\right]$. As we start with a suboptimal solution, it is expected that the trial step size $\gamma_i$  should generally be small, and therefore,  we set  $\gamma_0 = 1\e{-8}$.
As will be shown below, the optimization method always improves this result and yields smaller approximation errors in model reduction.

We also compare our optimization-based model reduction methods with other existing reduction techniques for Hamiltonian systems. Among the PSD approaches proposed in~\cite{PengM16}, we choose the cotangent lift method due to its good performance, where the balance between the simplicity and the accuracy is taken into account. Moreover, whenever possible, we include the PSD SVD-like decomposition method from \cite{BuchBH19} in our comparison experiments.

As we are specially interested in the energy conservation
in the course of model reduction of Hamiltonian systems, we employ the Crank--Nicolson integration method with a~constant time step size $h_t$. 
It has been shown in~\cite{LiWS16} that this method 
is of second order and that it delivers a discrete solution whose mass and energy are conserved.
The Crank--Nicolson scheme, being implicit, requires  the numerical solution of nonlinear systems in each time step. For this purpose, we use  in our experiments the MATLAB function \texttt{fsolve}. 

For nonlinear dynamical systems, one has to additionally approximate the nonlinear term in order to maintain the benefit of smaller order of the ROM~\eqref{eq:redHS}. This task for nonlinear Hamiltonian systems is even more challenging since the approximation must be done in such a way that it does not destroy the Hamiltonian structure which guarantees the energy conservation. For this purpose, different approaches can be used. 
\begin{itemize}
	\item[1)]  The first one is the PSD-DEIM (also termed as SDEIM in \cite{PengM16}), which is a~combination of the PSD proposed in \cite{PengM16} and the DEIM developed in \cite{ChaS10}. 	Assuming that the Hamiltonian function in \eqref{eq:HS} has the form
	\begin{equation*} 
	H(x)=\frac{1}{2}x^TMx+h(x)
	\end{equation*}
	with an SPD  matrix $M\in\mathbb{R}^{2n\times 2n}$ and a~nonlinear function $h:\mathbb{R}^{2n}\to\mathbb{R}$, the gradient $\nabla_{\tilde{x}}\tilde{H}(\tilde{x})$ in the ROM \eqref{eq:redHS} is then approximated by
	\begin{equation}\label{eq:SDEIM_scheme}
	\nabla_{\tilde{x}}\tilde{H}(\tilde{x})\approx 
	U^TMU\tilde{x}+U^TV(P^TV)^{-1}P^T \nabla_x h(U\tilde{x}),
	\end{equation}
	where $V\in\mathbb{R}^{2n\times m}$ is a~DEIM basis matrix 
	and $P=[e_{i_1},\ldots,e_{i_m}]$ is a~selector matrix associated with an index set
	$\{i_1,\ldots,i_m\}$ determined by a~greedy procedure applied to~$V$.
	\item[2)] The second approach is the structure-preserving method developed for port-Hamiltonian systems in~\cite{ChatBG16}. In this case, one uses the approximation
	\begin{equation}\label{eq:Structure_Preserving_scheme}
	\nabla_{\tilde{x}}\tilde{H}(\tilde{x})\approx 
	U^TMU\tilde{x}+U^TV(P^TV)^{-1}P^T \nabla_x h(P(V^TP)^{-1}V^TU\tilde{x}).
	\end{equation}
	The resulting model reduction  method is referred to as structure-pre\-ser\-ving PSD-DEIM.
\end{itemize}
The key point of the approximations \eqref{eq:SDEIM_scheme} and \eqref{eq:Structure_Preserving_scheme} is that only a~small number of selected components of the nonlinear term are needed to be evaluated. In our implementation, this number $m$ is approximately set to~$2.5 k$.
One can observe that the PSD-DEIM approach~\eqref{eq:SDEIM_scheme} focuses more on the task of approximation and actually the resulting ROM deviates from being a~Hamiltonian system. Nevertheless, the rate of this deviation
is shown to be bounded and this upper bound depends on the approximation quality of the nonlinear term \cite[Theorem 5.1]{PengM16}. On the contrary, the approximation~\eqref{eq:Structure_Preserving_scheme}  seems to devote more to the  structure preservation task. Indeed, it provides a~reduced-order Hamiltonian model with an approximate Hamiltonian function 
\[
\tilde{H}(\tilde{x})=\frac{1}{2}\tilde{x}^TU^TMU\tilde{x}+h(P(V^TP)^{-1}V^TU\tilde{x}).
\] 
In our opinion, however, the approximation $h(P(V^TP)^{-1}V^TU\tilde{x})$
can unexpectedly cause large  errors in the Hamiltonian function and the state.
In Subsection~\ref{subsec:mor_tests}, we present 
a comparison of these two approaches. In our experiments, for model reduction of nonlinear-systems, we use the PSD-DEIM method unless stated otherwise.  

To verify the approximation properties of ROMs, we consider the relative errors in the state vector and the energy given by	 
\begin{equation}\label{eq:RE_xH}
{\rm RE}_x=\dfrac{\|x-\tilde{x}\|_{L^2(0,T)}}{\|x\|_{L^2(0,T)}}, \qquad\quad {\rm RE}_H=\dfrac{\|H(x(\cdot))-\tilde{H}(\tilde{x}(\cdot))\|_{L^2(0,T)}}{\|H(x(\cdot))\|_{L^2(0,T)}},
\end{equation}
respectively, where $\|\cdot\|_{L^2(0,T)}$ denotes the $L^2$-norm of the corresponding function. Moreover, for each model, we choose one ROM of certain dimension and present the relative state vector error $\|x(t) - \tilde{x}(t)\|/\mu_{[0,T]}(\|x\|)$, where $\mu_{[0,T]}(\|x\|)$ denotes the mean of the function $\|x(t)\|$ over the time interval $[0,T]$, 
and the relative energy error $|H(x(t))-\tilde{H}(\tilde{x}(t))|/|H(x(0))|$ versus the time. We also expose the so-called \emph{average accelerating factor} (a.a.f.) of MOR which is the ratio of the simulation time for the FOM~\eqref{eq:HS} over the average of those for the ROMs~\eqref{eq:redHS} computed by different model reduction methods.

In the following, we present the results of various numerical experiments. In each example, we first introduce a model in the form of PDEs and its associated energy. This model is then spatially discretized using an~appropriate method to give a~finite-dimensional Hamiltonian system of the form~\eqref{eq:HS} with the  corresponding semi-discrete energy. Furthermore, physical and numerical parameters are set up.

In the preliminary tests, we have compared different optimization schemes applied to all models similarly to Subsection~\ref{ssec:symplEV}. 
Since the symplecticity constraint is important in preserving the Hamiltonian structure, we choose the ones that maintain this constraint better. Moreover, among those, we also want to see the possible difference resulting from the use of different metrics. Therefore, for simplicity, we restrict our comparisons in Subsections~\ref{subsec:NumerExampl_wave}--\ref{subsec:NumerExampl_Vlasov} to the CayleyC and SRE schemes. In addition, general discussion 
is given in Subsection~\ref{subsec:mor_tests}.

\subsubsection{Linear wave equation}
\label{subsec:NumerExampl_wave}

As a~first example, we consider a~1D linear wave equation subject to the periodic boundary   conditions 
\begin{equation}\label{eq:WaveEq}
\arraycolsep=2pt
\begin{array}{rclcl}
z_{tt} & = & c^2\, z_{\xi\xi} &\quad \text{ in } & (0,T)\times (a,b),\\
z(t,a) & = & z(t,b)        &\quad \text{ in } & (0,T),\\
z(0,\xi)& = & z_0(\xi),\enskip z_t(0,\xi) = z_1(\xi) &\quad \text{ in } & (a,b),
\end{array}
\end{equation}
where constant $c>0$ is the wave speed and $z$ is the unknown function of time and space, see \cite{PengM16}. The Hamiltonian function associated with \eqref{eq:WaveEq} is given by
\begin{equation}\label{eq:Ham_lin_wave}
\mathcal{H}(z)=\int_a^b \frac{1}{2} z_t^2 +\frac{c^2}{2} z_\xi^2 \> {\rm d}\xi.
\end{equation}

Using a finite difference discretization method on a~uniform spatial grid $\xi_j = a+j h_\xi$, $j = 0,...,n$, with the spatial step size $h_\xi=(b-a)/n$ and setting 
$q_j(t) = z(t,\xi_j)$, \mbox{$p_j(t) = z_t(t,\xi_j)$}, $q = [q_1,\ldots, q_n]^T$, and
$p = [p_1, \ldots, p_n]^T$, we obtain the linear Hamiltonian system
\begin{align}\label{eq:WaveHamiltonSyst}
\begin{bmatrix}
\dot{q} \\ \dot{p}
\end{bmatrix} &= J_{2n}\begin{bmatrix}
-c^2 D_{\xi\xi}&0\\0& I
\end{bmatrix} \begin{bmatrix}
q\\p
\end{bmatrix} 
\end{align}
with the quadratic Hamiltonian function 
\[
H(q,p) = \sum_{j=1}^{n} \frac{p_j^2}{2} + \frac{c^2(q_{j+1}-q_j)^2}{2h_\xi^2} + \frac{c^2(q_j-q_{j-1})^2}{2h_\xi^2} 
=\frac{1}{2}p^Tp-\frac{c^2}{2}q^TD_{\xi\xi}\,q,
\]
where $D_{\xi\xi}$ is the three-point finite difference matrix approximating the second-order spatial differential operator. Note that $h_\xi H(q,p)$ gives the spatial discretization of the Hamiltonian function~\eqref{eq:Ham_lin_wave}.
In our tests, we chose $c=0.1$, $a=0$, $b=1$, $h_\xi=0.002$, $T=50$, $h_t=0.01$, and the initial functions $z_1(\xi)\equiv 0$ and $z_0(\xi)=\phi(10|\xi-\tfrac12|)$ with the cubic spline function
\[
\phi(\eta)=\left\{\begin{array}{lcl}
1-\frac{3}{2}\eta^2+\frac{3}{4}\eta^3 & \text{ if } & 0\leq \eta\leq 1, \\[1mm]
\frac{1}{4}(2-\eta)^3 & \text{ if } & 1< \eta\leq 2, \\[1mm]
0           & \text{ if } & 2 < \eta.
\end{array}\right. 
\]

\noindent
The resulting system \eqref{eq:WaveHamiltonSyst} has dimension 
$2n=1000$. For the PSD data, we extract $s=500$ snapshots from the Crank--Nicolson solution of the FOM and compute the ROMs of dimension $2k$ with $k = 10, 20, 40, 80$. The relative errors defined in~\eqref{eq:RE_xH} and a.a.f.
for the ROMs computed by the cotangent lift (CotLift), the Riemannian optimization-based  methods CayleyC and  SRE with $\texttt{iter}=1000$ iterations,
and the SVD-like reduction method (SVD-like) are presented in Table~\ref{tab:LinWaveEq}. 
Furthermore, Figure~\ref{fig:LinWaveEq} shows the changes of the relative state and energy errors with time for the ROM of dimension $2k=80$. 

\begin{table}[htbp]
	\centering
	\footnotesize
	\caption{Linear wave equation, $n=500$: model reduction results
		(the italic font means a~warning of ill-conditioning received during the simulation of the reduced-order model).}
	\label{tab:LinWaveEq}
	\begin{tabular}{r r r r r r r r r r }
		\toprule
		\multirow{2}{*}{~$k$~} & \multicolumn{2}{c}{CotLift} & \multicolumn{2}{c}{CayleyC} & \multicolumn{2}{c}{SRE}&\multicolumn{2}{c}{SVD-like} & \multirow{2}{*}{a.a.f.}\\\cmidrule(r){2-3}\cmidrule(r){4-5}\cmidrule(r){6-7}\cmidrule(r){8-9}
		& $\mathrm{RE}_x$ & $\mathrm{RE}_H$ & $\mathrm{RE}_x$ & $\mathrm{RE}_H$ & $\mathrm{RE}_x$ & $\mathrm{RE}_H$ & $\mathrm{RE}_x$ & $\mathrm{RE}_H$ &  \\\midrule
		10 & 4.91$\e{-2}$ & $1.45\e{-12}$ &4.90$\e{-2}$ & $1.46\e{-12}$ &4.90$\e{-2}$ &$1.12\e{-12}$&1.78$\e{-1}$ &$1.28\e{-2}$&73 \\
		20&1.63$\e{-2}$ &$2.26\e{-12}$ &1.61$\e{-2}$ &$2.74\e{-12}$ &1.60$\e{-2}$ & $9.88\e{-13}$&7.07$\e{-2}$& $1.54\e{-4}$& 48\\
		40& 9.79$\e{-3}$& $4.22\e{-12}$&9.15$\e{-3}$ & $7.32\e{-12}$&9.05$\e{-3}$ &$1.10\e{-12}$&\emph{5.71e$-$2}&\emph{1.23e$-$4}&36 \\
		80&6.28$\e{-3}$ &$1.56\e{-12}$ &6.23$\e{-3}$ &$1.23\e{-11}$ &6.23$\e{-3}$ &$2.86\e{-12}$&7.92$\e{-2}$&$1.96\e{-5}$&\ \,3 \\
		\bottomrule
	\end{tabular}
\end{table}

\begin{figure}[htbp]%
	\centering
	\includegraphics[width=0.7\textwidth]{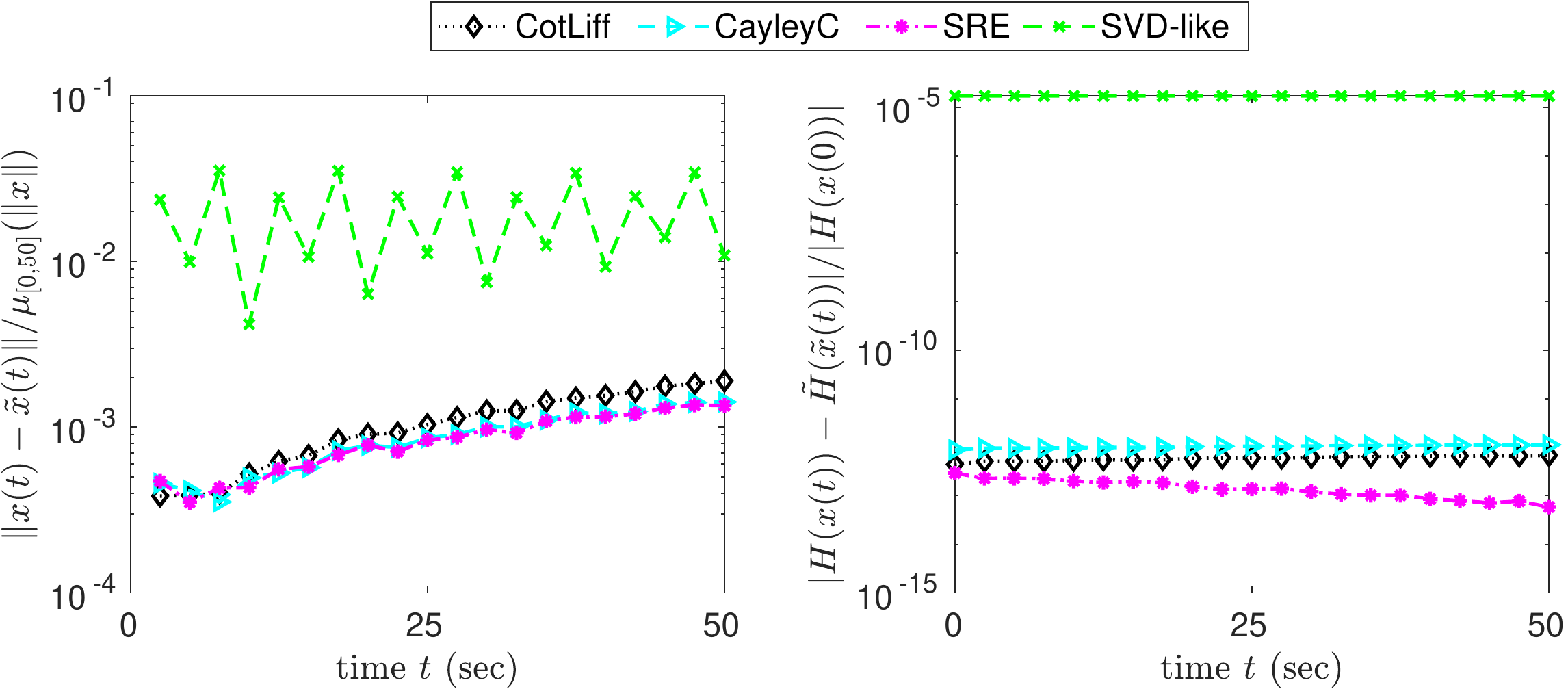}
	{\caption{Linear wave equation, $n=500$, $k=40$: relative errors in the state vector (left) and the Hamiltonian function (right).}
		\label{fig:LinWaveEq}}
\end{figure}

\subsubsection{Sine-Gordon equation}
The second example is the sine-Gordon equation with Dirichlet boundary conditions
\begin{equation}\label{eq:sineGordon}
\arraycolsep=2pt
\begin{array}{rclcl}
z_{tt} & = & z_{\xi\xi}-\sin(z) &\quad \text{ in } & (0,T)\times (a,b),\\
z(t,a) & = & \phi_a(t), \quad z(t,b)=\phi_b(t),    &\quad \text{ in } & (0,T),\\
z(0,\xi)& = & z_0(\xi),\enskip z_t(0,\xi) = z_1(\xi) &\quad \text{ in } & (a,b).
\end{array}
\end{equation}
Such an~equation appears, for example, in differential geometry and in a~wide variety of
physical applications. For the initial conditions
\[
z_0(\xi) = 4\arctan \biggl(\exp\Bigl(\frac{\xi-\xi_0}{\sqrt{1-v^2}}\Bigr)\biggr),
\qquad z_1(\xi) = \frac{-4\, v\,\exp\bigl(\frac{\xi-\xi_0}{\sqrt{1-v^2}}\bigr)}
{\sqrt{1-v^2}\Bigl(1+\exp\bigl(\frac{2(\xi-\xi_0)}{\sqrt{1-v^2}}\bigr)\Bigr)} ,
\]
the sine-Gordon equation \eqref{eq:sineGordon} admits the exact solitary solution
\[
z(t,\xi)=4\arctan\biggl(\exp\Bigl(\frac{\xi-\xi_0-vt}{\sqrt{1-v^2}}\Bigr)\biggr),
\] 
where $0<v<1$ is the velocity of the solitary wave and $\xi_0>0$.  

The finite difference method on a~uniform spatial grid $\xi_j = j h_\xi$ for  
\mbox{$j = 0,...,n+1$}, with the spatial step size $h_\xi=(b-a)/(n+1)$  
leads to the nonlinear Hamiltonian system
\begin{equation}\label{eq:sineGrodonHamilton}
\begin{bmatrix}
\dot{q}\\ \dot{p}
\end{bmatrix} = J_{2n}\left(\begin{bmatrix}
-D_{\xi\xi}&0\\0& I
\end{bmatrix} \begin{bmatrix}
q\\p
\end{bmatrix} 
+ \begin{bmatrix}\psi(q)\\0\end{bmatrix}\right),
\end{equation}
where $D_{\xi\xi}$ is the three-point finite difference matrix, $q = [q_1,\ldots, q_n]^T$ with $q_j(t) = z(t,\xi_j)$, 
$p = [p_1, \ldots, p_n]^T$ with $p_j(t) = z_t(t,\xi_j)$, and
\[
\psi(q)=\bigl[\sin(q_1)-\frac{\phi_a}{h_\xi^2},\sin(q_2),\ldots,\sin(q_{n-1}),\sin(q_n)-\frac{\phi_b}{h_{\xi}^2}\bigr]^T.
\]
Note that $D_{\xi\xi}$ in \eqref{eq:sineGrodonHamilton} slightly differs from that in the semi-discretized wave equation~\eqref{eq:WaveHamiltonSyst} due to the different boundary conditions. The Hamiltonian function of \eqref{eq:sineGrodonHamilton} is given by
\[
H(q,p)=\frac{1}{2}p^Tp -\frac{1}{2}q^TD_{\xi\xi}q   
+ \sum_{i=1}^{n} \bigl(1\!-\!\cos(q_i)\bigr)+
\frac{\phi_a^2}{2h_{\xi}^2}-\frac{\phi_aq_1}{h_{\xi}^2}+\frac{\phi_b^2}{2h_{\xi}^2}-
\frac{\phi_bq_n}{h_{\xi}^2}.
\]
In our experiments, we take $v=0.2$, $\xi_0=10$, $a=0$, $b=50$, $h_{\xi}=0.025$, \mbox{$n = 1001$}, $T=90$,  and $h_t=0.05$. After the FOM simulation, $s=450$ snapshots are uniformly extracted from the solution for constructing the reduced bases of size $2k$ with $k = 11, 13, 15, 17$ by using the proposed optimization schemes with $\texttt{iter}=6000$ iterations.
Model reduction results with detailed setting values are given in Table~\ref{tab:SineGordon} and Figure~\ref{fig:SineGordon}.

\begin{table}[htbp]
	\caption{Sine-Gordon equation, $n=1001$: model reduction results.}
	\label{tab:SineGordon}
	\centering
	\footnotesize
	\begin{tabular}{r r r r r r r r }
		\toprule
		\multirow{2}{*}{~$k$~} & \multicolumn{2}{c}{CotLift} & \multicolumn{2}{c}{CayleyC} & \multicolumn{2}{c}{SRE}& \multirow{2}{*}{a.a.f.}\\\cmidrule(r){2-3}\cmidrule(r){4-5}\cmidrule(r){6-7}
		& $\mathrm{RE}_x$ & $\mathrm{RE}_H$ & $\mathrm{RE}_x$ & $\mathrm{RE}_H$ & $\mathrm{RE}_x$ & $\mathrm{RE}_H$ &  \\\midrule
		11&$1.43\e{-2}$ &$4.11\e{-2}$ &$1.70\e{-2}$ & $9.70\e{+0}$ &$9.38\e{-3}$ & $1.37\e{-2}$ &118\\
		13& $8.32\e{-3}$& $1.69\e{-2}$&$4.21\e{-3}$ & $3.97\e{-3}$ &$6.78\e{-3}$ &$3.72\e{-3}$&100\\
		15&$5.65\e{-3}$ &$9.12\e{-3}$ &$3.60\e{-3}$ &$7.41\e{-4}$ &$4.97\e{-3}$ &$2.56\e{-3}$& \ \,92\\
		17 &$3.34\e{-3}$&$4.57\e{-3}$ &$3.05\e{-3}$&$1.00\e{-3}$&$3.43\e{-3}$&$1.62\e{-3}$&\ \,86\\
		\bottomrule
	\end{tabular}
\end{table} 
\begin{figure}[htbp]%
	\centering
	\includegraphics[width=0.7\textwidth]{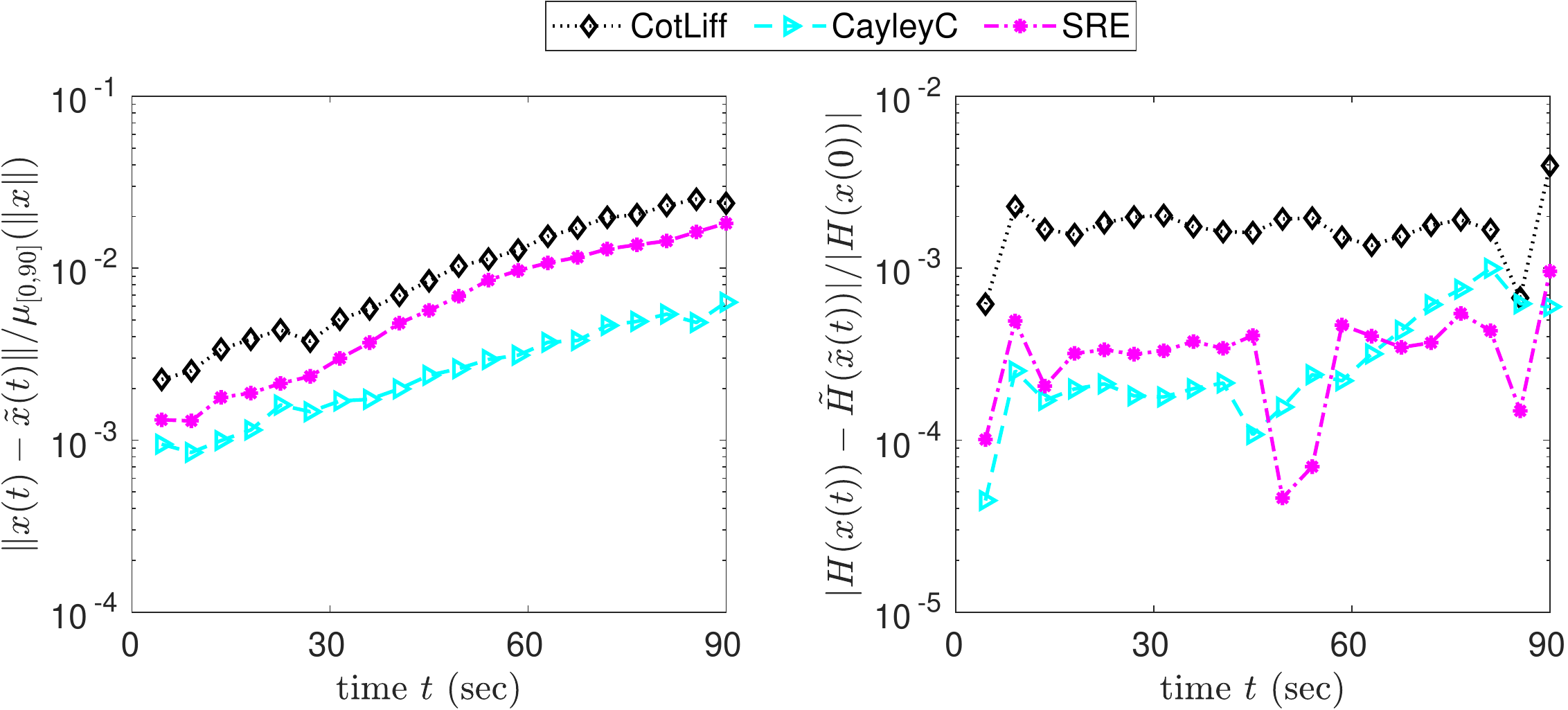}
	{\caption{Sine-Gordon equation, $n = 1001, k=13$: relative errors in the state vector (left) and the Hamiltonian function (right).}
		\label{fig:SineGordon}}
\end{figure}

\subsubsection{Schr\"{o}dinger equation}
\label{subsec:NumerExampl_Schroedinger}

The third test is a 1D nonlinear Schr\"{o}dinger equation with periodic boundary conditions 
\begin{equation}\label{eq:SchroedingerEq}
\arraycolsep=2pt
\begin{array}{rclcl}
\mathrm{i}\,z_t &=& -z_{\xi\xi}- \varepsilon|z|^2z &\quad \text{ in }& (0,T)\times (-\frac{L}{2},\frac{L}{2}),\\
z(t,-\frac{L}{2}) & = & z(t,\frac{L}{2}) & \quad \text{ in }& (0,T),\\
z(0,\xi) &= & z_0(\xi)& \quad \text{ in }& (-\frac{L}{2},\frac{L}{2}),
\end{array}
\end{equation}
where $z$ is the unknown complex valued wave function, $\varepsilon>0$, 
and the initial condition has the form
\[
z_0(\xi) = \frac{\sqrt{2}}{\cosh(\xi-\xi_0)}\exp\Bigl(\mathrm{i}\frac{c\,(\xi-\xi_0)}{2}\Bigr)
\]
with the wave speed $c>0$ and $\xi_0\geq 0$. By introducing the real and imaginary part of the wave function $z = z_1 + \mathrm{i}\,z_2$,
we can turn the Schr\"{o}dinger equation~\eqref{eq:SchroedingerEq} into an~infinite-dimensional Hamiltonian system 
\begin{align}\label{eq:SchroedingerHamiltonform1}
\begin{split}
(z_1)_t &=         -(z_2)_{\xi\xi} - \varepsilon(z_1^2+z_2^2)z_2,\\
(z_2)_t &= \enskip\,\,(z_1)_{\xi\xi} + \varepsilon(z_1^2+z_2^2)z_1,
\end{split}
\end{align}
with the associated Hamiltonian function
\[
\mathcal{H}(z_1,z_2) = \int_{0}^{L} \frac{1}{2}(z_1)_\xi^2 + \frac{1}{2}(z_2)_\xi^2 - \frac{\varepsilon}{4}(z_1^2 + z_2^2)^2\, {\rm d}\xi.
\]

A~spatial discretization of \eqref{eq:SchroedingerHamiltonform1} by using the finite difference method on a~uniform spatial grid $\xi_j = jh_\xi$ with $h_\xi=L/n$ leads to the nonlinear Hamiltonian system
\begin{align}\label{eq:SchroedingerHamiltonform2}
\begin{split}
\begin{bmatrix}
\dot{q}\\\dot{p}
\end{bmatrix} &= J_{2n}\begin{bmatrix}
- D_{\xi\xi}-G(q,p) &0\\0& -D_{\xi\xi}-G(q,p)
\end{bmatrix} \begin{bmatrix}
q\\p
\end{bmatrix},
\end{split}
\end{align}
where $q = [q_1,\ldots,q_n]^T$ with $q_j(t) = z_1(t,\xi_j)$,  
$p = [p_1,\ldots,p_n]^T$ with $p_j(t) = z_2(t,\xi_j)$, 
$D_{\xi\xi}$ is  the three-point finite difference matrix,
and 
$$
G(q,p)=\varepsilon\diag(q_1^2 + p_1^2,\ldots, 	q_n^2 + p_n^2).
$$ 
The Hamiltonian function of \eqref{eq:SchroedingerHamiltonform2} has the form
\[
H(q,p) = \sum_{i = 1}^{n}\biggl(\frac{q_i^2-q_iq_{i-1}}{h_\xi^2} + \frac{p_i^2-p_ip_{i-1}}{h_\xi^2} -\frac{\varepsilon}{4}(p_i^2+q_i^2)^2\biggr).
\] 
Numerical setting for simulation is similar to that in \cite{AfkhH17}:
$\varepsilon=1.0932$, \mbox{$L = 2\pi/0.11$}, $c=1$, $\xi_0 = 0$, $h_\xi = 0.2231$, $n = 1024$, $T=30$, and $h_t=0.01$.  From the FOM solution at 3000 time instances, we employ $s=750$ of them for computing the ROMs of dimension~$2k$ with $k = 95, 100, 105, 110$. We run the optimization schemes with ${\tt iter}=500$ iterations. Similarly, we report the model reduction results in Table~\ref{tab:Schroedinger} and Figure~\ref{fig:Schroedinger}. 

\begin{table}[ht]
	\caption{Schr\"{o}dinger equation, $n=1024$: model reduction results.}
	\label{tab:Schroedinger}
	\centering
	\footnotesize
	\begin{tabular}{r r r r r r r r }
		\toprule
		\multirow{2}{*}{~$k$~} & \multicolumn{2}{c}{CotLift} & \multicolumn{2}{c}{CayleyC} & \multicolumn{2}{c}{SRE}& \multirow{2}{*}{a.a.f.}\\\cmidrule(r){2-3}\cmidrule(r){4-5}\cmidrule(r){6-7}
		& $\mathrm{RE}_x$ & $\mathrm{RE}_H$ & $\mathrm{RE}_x$ & $\mathrm{RE}_H$ & $\mathrm{RE}_x$ & $\mathrm{RE}_H$ &  \\\midrule
		95 & $7.62\e{-2}$ & $1.22\e{-1}$ &$8.06\e{-2}$ &$1.35\e{-1}$ &$5.88\e{-2}$ &$7.62\e{-2}$&1.74 \\
		100 & $6.77\e{-2}$ & $9.67\e{-2}$ &$1.98\e{-2}$ &$8.46\e{-3}$ &$2.94\e{-2}$ &$1.85\e{-2}$&1.52 \\
		105&$4.48\e{-2}$ &$4.30\e{-2}$ & $4.71\e{-2}$ &$4.86\e{-2}$ &$2.14\e{-2}$ & $9.77\e{-3}$&1.36\\
		110& $3.60\e{-2}$& $2.78\e{-2}$&$3.95\e{-2}$ & $3.41\e{-2}$ & $3.42\e{-2}$ & $2.55\e{-2}$ &1.35\\
		\bottomrule
	\end{tabular}
\end{table}

\begin{figure}[ht]%
	\centering
	\includegraphics[width=0.7\textwidth]{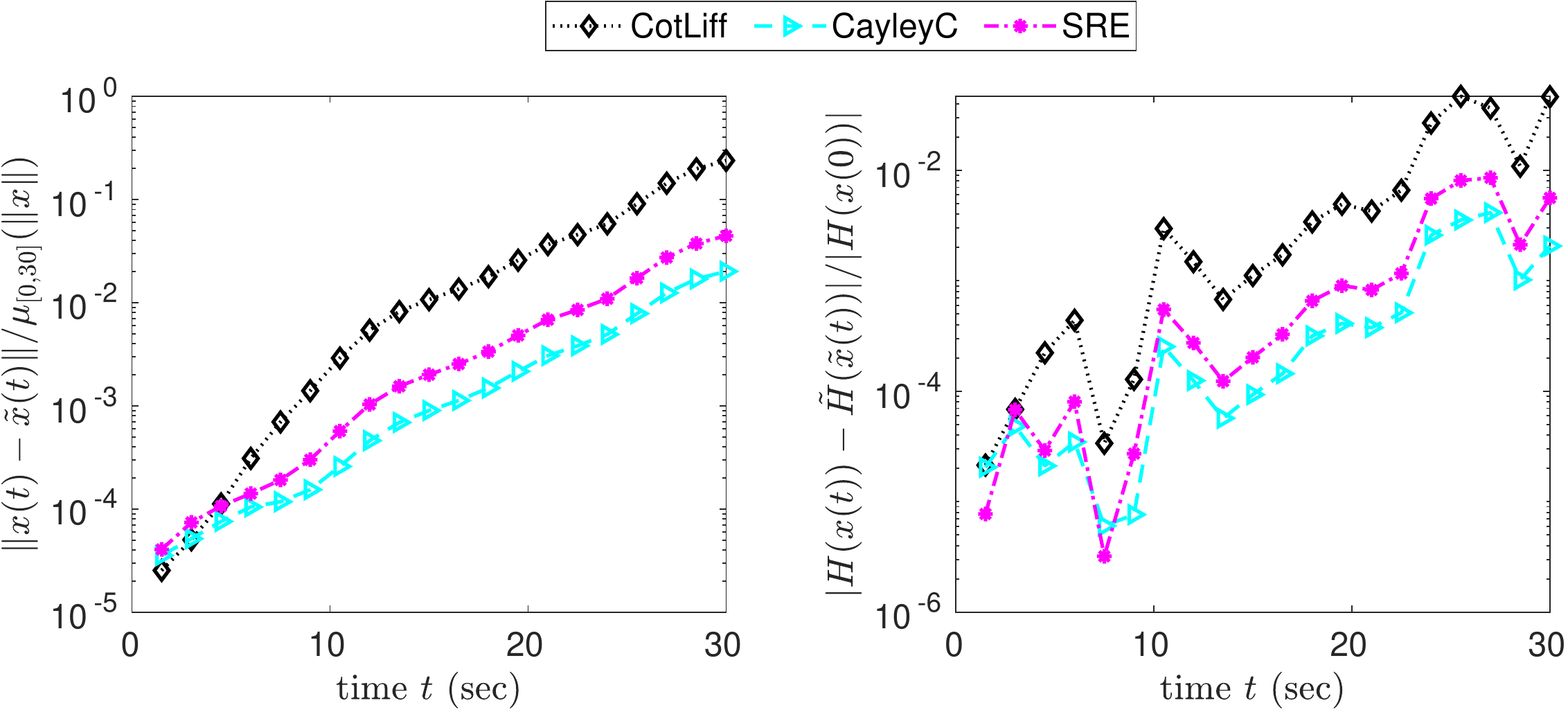}
	{\caption{Schr\"{o}dinger equation, $n=1024$, $k=100$:
			relative errors in the state vector (left) and the Hamiltonian function (right).}
		\label{fig:Schroedinger}}
\end{figure}

\subsubsection{Vlasov equation}
\label{subsec:NumerExampl_Vlasov}

Finally, we consider a~1D Vlasov equation \cite{TyraK19}
\begin{equation}\label{eq:VlasovEq}
z_t + v^T z_{\xi}  -E(\xi)^T  z_v= 0,
\end{equation}
where  $z = z(t,\xi,v)$ is the particle density function, 
$E(\xi) = -\tfrac{\rm d}{{\rm d}\xi}\phi(\xi)$ is the electric field with a~potential function $\phi(\xi)$. 
In the particle-in-cell method~\cite{Lape16}, one assumes that there are totally $n$ particles whose position and velocity at time $t$ are denoted, respectively, by $q_j(t)$ and $p_j(t)$, $j = 1,\ldots,n$. Approximating~$z$~by 
\[
\tilde{z}(t,\xi,v)  = \sum_{j=1}^{n}w_j\delta(\xi-q_j(t))\delta(v-p_j(t)),
\]
where $\delta$ is the Dirac function and $w_j$ is the $j$-th weight, and 
requiring that the zeroth and first-order moments in $\xi$ and $v$ of \eqref{eq:VlasovEq} are satisfied for this approximation, we obtain the equations
\begin{equation}\label{eq:XVEq}
\dot{q}_j = p_j,\quad \dot{p}_j = -E(q_j),\quad j = 1,\dots,n.
\end{equation}
Introducing $q = [q_1^T,\ldots,q_n^T]^T$ and $p = [p_1^T,\ldots, p_n^T]^T$,
equations \eqref{eq:XVEq} can shortly be written as the Hamiltonian system \eqref{eq:HS} with $x=[q^T,\,p^T]^T$ and the Hamiltonian
function
\[
H(q,p) = \sum_{j=1}^{n}\biggl(\frac{1}{2}p_i^Tp_i^{} - \phi(q_i)\biggr).
\]

We use the same numerical setting as in \cite{TyraK19}: $t \in (0,0.2]$, $\xi \in [0, 1]$, the periodic boundary condition and the initial condition
\begin{equation}\label{eq:ICf}
z(0,\xi,v) = \frac{1+\varepsilon\cos (2\pi \xi)}{\sqrt{2\pi}(a+1)}\left(\exp\Big(\!-\frac{v^2}{2}\Big) +\frac{a}{\sigma}\,\exp\Big(\!-\frac{(v-v_0)^2}{2\sigma^2}\Big) \right)
\end{equation}
with $\varepsilon = 0.3$, $a = 0.3$, $v_0 = 4$, and $\sigma = 1$. 
We assume that there are $n\!=\!1000$ particles in the system. The time interval is equally divided into subintervals of length $h_t= 0.0001$. The initial values $q(0)$ and $p(0)$ are generated as random variables from the distribution\footnote{Tristan Ursell (2020). Generate Random Numbers from a 2D Discrete Distribution, MATLAB Central File Exchange. Retrieved April 29, 2020. (\href{https://www.mathworks.com/matlabcentral/fileexchange/35797-generate-random-numbers-from-a-2d-discrete-distribution}{https://www.mathworks.com/matlabcentral/fileexchange/35797-generate-random-numbers-from-a-2d-discrete-distribution})}~\eqref{eq:ICf}. The electric field is chosen as $E(\xi) = 3\cos (4\pi \xi)$. Among $2000$ time  instances of the FOM solution, $s=400$ snapshots are used for constructing the ROMs of dimension $2k$ with $k=6, 8, 10, 12$. In the optimization schemes, we choose ${\tt iter}=1000$.
Table~\ref{tab:Vlasov} and Figure~\ref{fig:Vlasov} show the obtained model reduction results.

\begin{table}[ht]
	\caption{Vlasov equation, $n=1001$: model reduction results.}
	\label{tab:Vlasov}
	\centering
\footnotesize
	\begin{tabular}{r r r r r r r r r r }
		\toprule
		\multirow{2}{*}{~$k$~} & \multicolumn{2}{c}{CotLift} & \multicolumn{2}{c}{CayleyC} & \multicolumn{2}{c}{SRE}&\multicolumn{2}{c}{SVD-like} & \multirow{2}{*}{a.a.f.}\\\cmidrule(r){2-3}\cmidrule(r){4-5}\cmidrule(r){6-7}\cmidrule(r){8-9}
		& $\mathrm{RE}_x$ & $\mathrm{RE}_H$ & $\mathrm{RE}_x$ & $\mathrm{RE}_H$ & $\mathrm{RE}_x$ & $\mathrm{RE}_H$ & $\mathrm{RE}_x$ & $\mathrm{RE}_H$ &  \\\midrule
		$6$ & $1.76\e{-3}$ & $1.32\e{-\,\ 6}$ &$1.65\e{-3}$ &$2.40\e{-\ \,7}$ &$1.10\e{-3}$ &$6.67\e{-\,\ 7}$& $2.94\e{-4}$&$4.37\e{-\ \,9}$ &$13$\\
		8&$5.04\e{-4}$ &$6.53\e{-\,\ 9}$ &$4.75\e{-4}$ &$1.94\e{-\,\ 8}$ &$5.11\e{-4}$ & $5.11\e{-\,\ 9}$ &$7.61\e{-5}$& $7.56\e{-11}$&$12$\\
		10& $2.23\e{-4}$ & $4.68\e{-10}$& $2.22\e{-4}$ & $6.73\e{-10}$&$1.19\e{-4}$ &$1.08\e{-\ \,9}$ & $5.43\e{-4}$ & $6.00\e{-\ \,9}$&$11$\\
		12&$6.32\e{-5}$&$5.33\e{-11}$ &$6.30\e{-5}$ &$5.26\e{-11}$ &$6.31\e{-5}$ &$5.20\e{-11}$ & $2.05\e{-2}$ &$1.46\e{-\,\ 3}$&$10$\\
		\bottomrule
	\end{tabular}
\end{table}
\begin{figure}[t]%
	\centering
	\includegraphics[width=.7\textwidth]{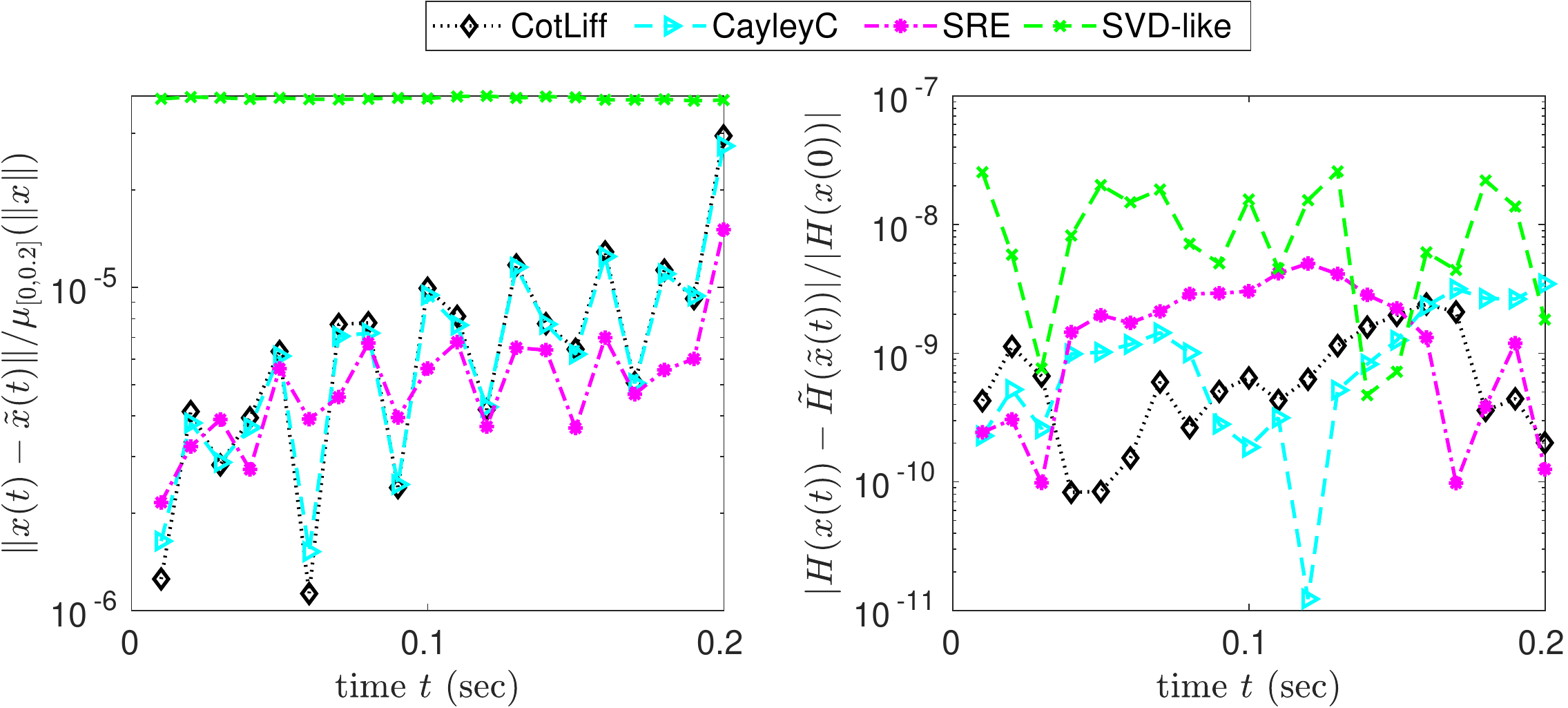}
	\caption{Vlasov equation, $n=1001$, $k = 10$: relative errors in the state vector (left) and the Hamiltonian function (right).
	}
	\label{fig:Vlasov}
\end{figure}

\subsubsection{Discussion}\label{subsec:mor_tests}

First, for different test models, the relative errors in the state ($\mathrm{RE}_x$)  generally decrease when the ROMs computed by using the optimization schemes are getting larger, see Figure~\ref{fig:error_vs_dim} for a~comparison.
However, the error in the energy  ($\mathrm{RE}_H$) sometimes does not behave the same. 
A~difference in the linear and nonlinear cases can be observed. Recalling that theoretically, if $x_0 \in \mbox{im}(U)$, then the error in the Hamiltonian function should be zero. However, in practice, due to different errors, most considerably caused by the time integration solver, the computed error in the Hamiltonian function is nonzero and apparently independent of the dimension of the ROMs, see, e.g.,  Table~\ref{tab:LinWaveEq} for the linear wave equation. 
Meanwhile, for nonlinear models, this error additionally suffers from the approximation of the nonlinear term.  In view of \cite[Theorem 5.1]{PengM16}, we can generally state that the more DEIM modes in approximating the nonlinear term we use, which is the number of columns of $V$ in \eqref{eq:SDEIM_scheme} and  \eqref{eq:Structure_Preserving_scheme}, the smaller error in the energy we obtain. In our implementation, this number is nailed to about $2.5 k$ and therefore explains the monotone decrease of the error versus reduced order in the case of nonlinear Hamiltonian systems.

\begin{figure}[!h]%
	\centering
	\includegraphics[width=0.65\textwidth]{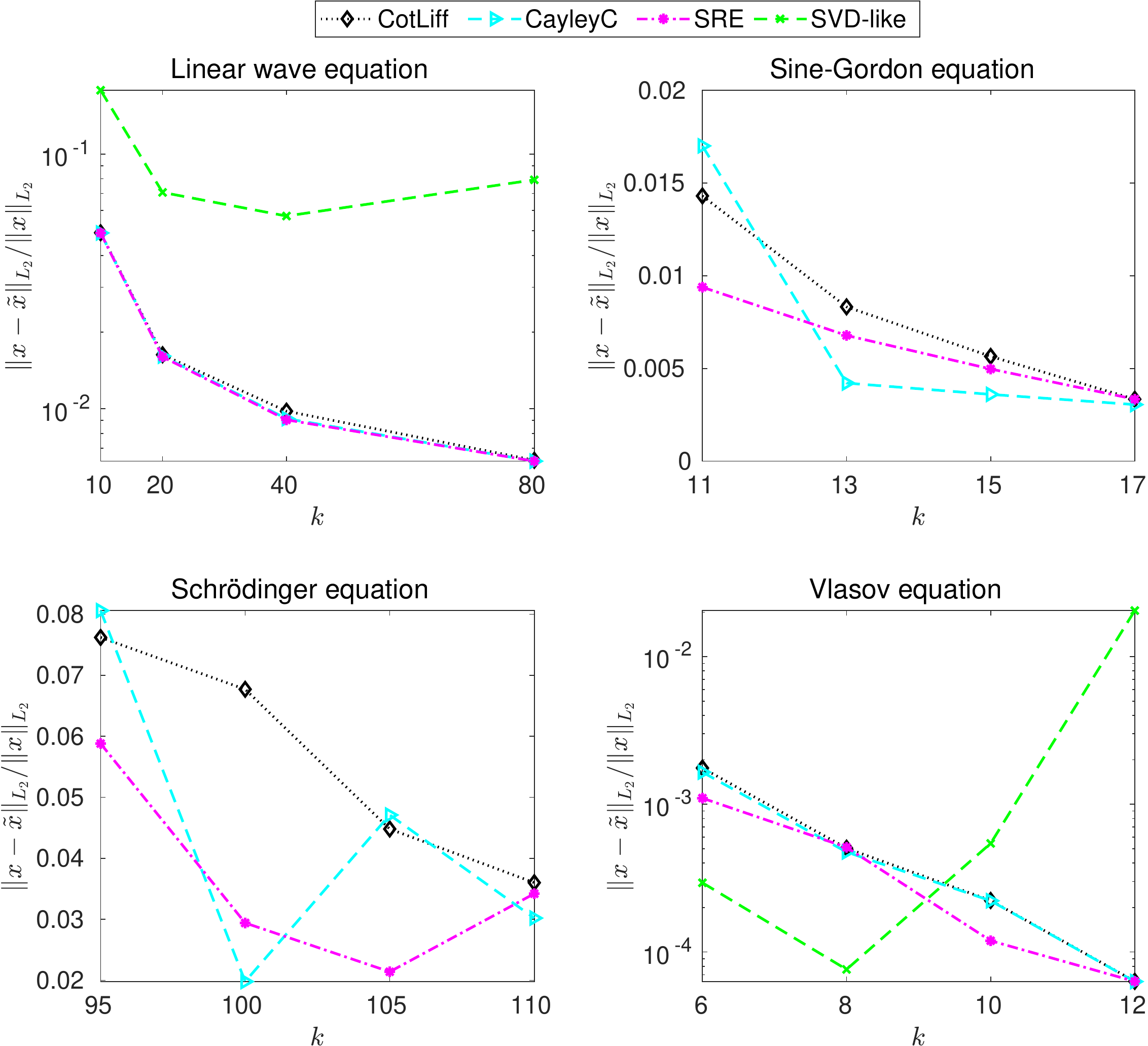}
	\caption{Relative errors $\mathrm{RE}_x$ in the state vector for the ROMs of different dimensions.}
	\label{fig:error_vs_dim}
\end{figure}

Second, as the cotangent lift method provides only a~suboptimal solution to the minimization problem~\eqref{eq:min_mor},
the optimization-based methods almost always improve the reduction results by delivering slightly smaller errors in both state vector and energy function.  
In the optimization aspect, this fact can be viewed as a result of the decrease in the value of the cost function~$f$. Figure~\ref{fig:valcost} reports the history of the values of the cost function for the wave and sine-Gordon equations. One can observe that $f(X_i)$ is decreasing in Algorithm~\ref{alg:non-monotone gradient} starting from a~suboptimal solution generated by the cotangent lift method, leading to a~reduction in errors. 
However, the correlation between the amounts of decrease in the values of the cost function 
and the resulting model reduction error $\mathrm{RE}_x$ is unclear. 
Furthermore, note that the amount of improvement is in general rather modest. On the one hand, the starting point is suboptimal which means it is already quite a good approximation. On the other hand, the optimization method 
here is of first order, i.e., only first-order information of the cost function is employed. This class of methods is known to be slow when approaching the limit. Moreover, the final model reduction error depends also on other factors, e.g., the approximation of the nonlinear term, which also attenuates the improvement. 

\begin{figure}[ht]%
	\centering
	\includegraphics[width=0.7\textwidth]{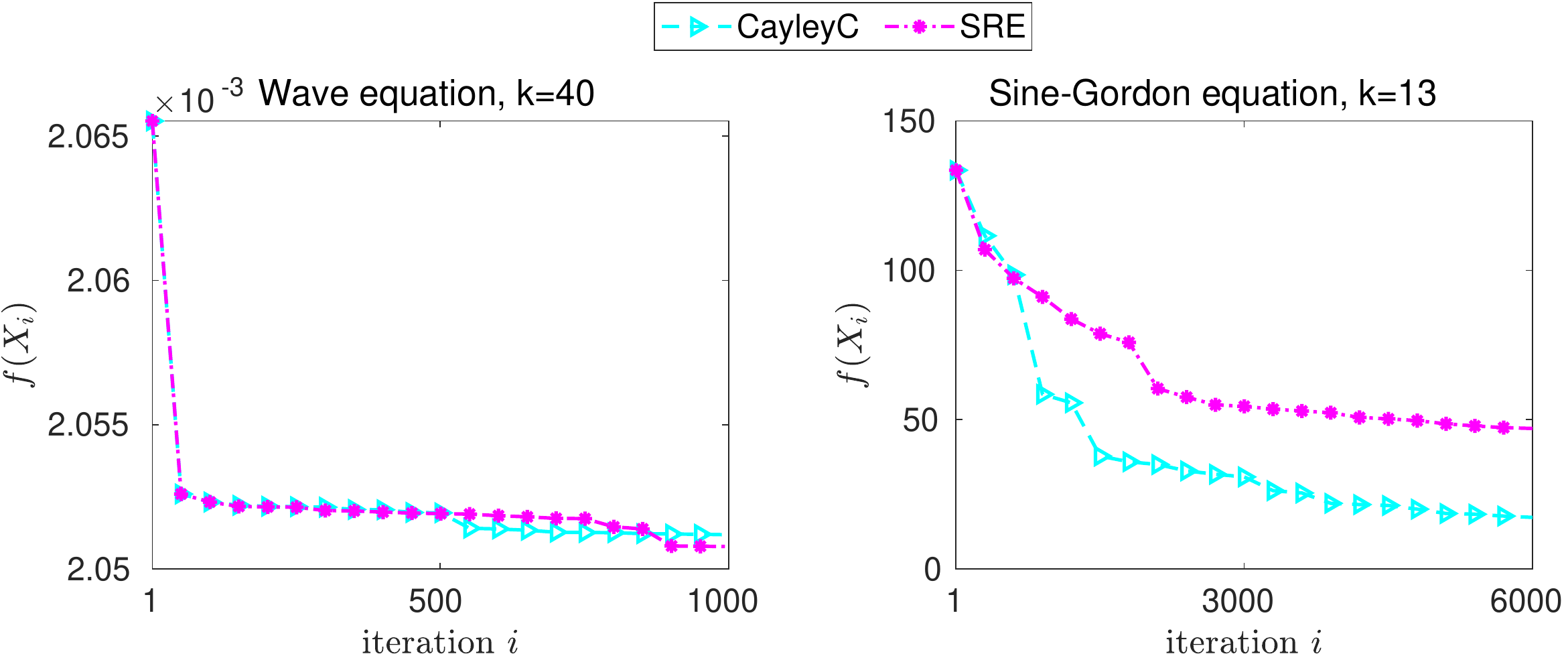}
	\caption{Values of the cost function for the wave and sine-Gordon equations provided by the CayleyC and SRE optimization schemes.}
	\label{fig:valcost}
\end{figure}

Third, unlike linear Hamiltonian systems, for which the simulation time  of ROMs 
linearly depends on the reduced order, that of nonlinear systems is  
difficult to predict as it also strongly depends on the structure of the nonlinear term. 
Moreover, if implicit integration schemes, like the Crank--Nicolson method in our case, are used for simulation, then the convergence properties of the employed nonlinear solver may significantly impact the computational time.
In view of this, we can explain the low accelerating factor for the Schr\"{o}dinger equation compared to other models. Indeed, the  nonlinear term $\big[(G(q,p)q)^T\; (G(q,p)p)^T\big]^T$ in the FOM \eqref{eq:SchroedingerHamiltonform2} allows a vectorization in computing which results in quite a~fast simulation of the FOM even in large dimension while that of the ROM does not have such a~structure any more. As a~consequence, even though the PSD-DEIM method is exploited, i.e., only a moderate number ($m$ instead of $2n$) of components of the nonlinear term  are evaluated, 
the simulation of the ROM is still quite time consuming, especially in designing a~selection strategy via a~selector matrix~$P$. Note also that for the Schr\"odinger model,  the ratio of the order reduction, $k/n$, is largest, which also contributes to the observed fact.

Fourth, the SVD-like approach does not work reliably for the sine-Gordon and Schr\"{o}dinger equations with the reported setting which makes us hesitate to present the result. Here, we used the available code from \cite{BuchBH19} and it is claimed there that the basis can deviate from being symplectic in some cases. This situation is unfortunately experienced in our experiments for these models, and also for the wave equation when $k = 40$.
It is most probably the reason why the simulation of the computed ROMs suffers from ill-conditioning and results in large errors. Nevertheless, the SVD-like reduction method delivers ROMs with the smallest errors in some situations for the Vlasov equation.
Further investigations are required to get a~full picture of this approach. 

Finally, presenting Table~\ref{tab:Vlasov_SDEIMvsStrucPresv}  with the relative errors in the state vector and energy for the reduced-order Vlasov models with different approximated nonlinear terms given in \eqref{eq:SDEIM_scheme} and \eqref{eq:Structure_Preserving_scheme}, we would like to convey the message that the structure preservation must be encompassed by	an~adequate approximation to have expectedly satisfactory result. The ROM with \eqref{eq:Structure_Preserving_scheme} is obviously Hamiltonian but a considerable error might have been caused during the approximation of the nonlinear term. As a consequence, the overall quality of the ROM obtained by the structure-preserving PSD-DEIM, including the energy preservation, is apparently not as good as that computed by the PSD-DEIM, which is not exactly structure-preserving.

\begin{table}[ht]
	\caption{Vlasov equation, $n=1001$, $k=6$: comparison of the PSD-DEIM and structure-preserving PSD-DEIM methods.}
	\label{tab:Vlasov_SDEIMvsStrucPresv}
	\centering
	\footnotesize
	\begin{tabular}{c r r r r r r r r}
		\toprule
		\multirow{2}{*}{PSD-DEIM} & \multicolumn{2}{c}{CotLift} & \multicolumn{2}{c}{CayleyC} & \multicolumn{2}{c}{SRE}&\multicolumn{2}{c}{SVD-like} \\\cmidrule(r){2-3}\cmidrule(r){4-5}\cmidrule(r){6-7}\cmidrule(r){8-9}
		& $\mathrm{RE}_x$ & $\mathrm{RE}_H$ & $\mathrm{RE}_x$ & $\mathrm{RE}_H$ & $\mathrm{RE}_x$ & $\mathrm{RE}_H$ & $\mathrm{RE}_x$ & $\mathrm{RE}_H$   \\\midrule
		\eqref{eq:SDEIM_scheme} & $1.76\e{-3}$ & $1.32\e{-6}$ &$1.65\e{-3}$ &$2.40\e{-7}$ &$1.10\e{-3}$ &$6.67\e{-7}$& $2.94\e{-4}$&$4.37\e{-9}$ \\
		\eqref{eq:Structure_Preserving_scheme} &$1.08\e{-2}$ &$6.35\e{-5}$ &$2.62\e{-3}$ &$6.34\e{-5}$ &$3.08\e{-3}$ & $6.35\e{-5}$ &$1.22\e{-2}$& $6.34\e{-5}$\\
		\bottomrule
	\end{tabular}
\end{table}

\section{Conclusion}\label{sec:concl}
We have proposed a new retraction on the symplectic Stiefel manifold which is based on an~SR decomposition. Its domain contains the unit ball and thus results in the global convergence of the corresponding Riemannian gradient-based optimization method. 

Various applications and examples have also been presented for validating and comparing the optimization schemes derived by combining different metrics and retractions. Numerical results showed that, depending on the problem and the setting, choice for metric and retraction must be taken with care to get better result. Especially,  schemes that use the SR~retraction maintains the symplecticity constraint the best. Running the same number of iteration, when $k$ is considerably smaller than $n$, there is not much difference in the time consumed by the schemes based on either the canonical-like metric or the Euclidean metric. However, if $k$ is approaching $n$, the schemes with canonical-like metric are faster. The numerical results also suggest that in this case, the Cayley retraction is favorably combined with the canonical-like metric as this combination tends to be faster than the others.

In addition, as an~accumulation point of the optimization iterates can be a~saddle point, deep investigation on the structure of  the set of critical points of the cost function is always helpful in assuring to compute a~minimizer. Finally, the fact that in many cases, especially in the symplectic model reduction problem, the presented optimization methods can only slightly improve the established model reduction methods, urges an investigation for faster optimization methods such as  conjugate gradient method and Newton method.

\section*{Acknowledgment}
\noindent
Part of this work was done when Bin Gao and Nguyen Thanh Son were with ICTEAM Institute, UCLouvain
and supported by the Fonds de la Recherche Scientifique -- FNRS and the Fonds Wetenschappelijk Onderzoek -- Vlaanderen under EOS Project no 30468160. The authors also would like to thank P.-A. Absil for helpful discussions.




\bibliographystyle{elsarticle-num}
\bibliography{references}


%
%
%
\end{document}